\documentclass{article}
\usepackage{spconf,amsmath,graphicx,hyperref, amsthm}
\usepackage{algorithmic}
\usepackage{array}
\usepackage{IEEEtrantools}
\usepackage{enumitem}
\usepackage{subcaption}
\usepackage{textcomp}
\usepackage{stfloats}
\usepackage{url}
\usepackage{verbatim}
\usepackage{float}
\usepackage{graphicx}
\usepackage{cite}
\usepackage{mathtools}
\usepackage[T1]{fontenc}
\usepackage{aecompl}
\usepackage{amsmath,amsthm,amssymb,scrextend}
\usepackage{bm}
\usepackage{fancyhdr}
\usepackage[labeled]{multibib}
\newcites{supp}{Appendix References}

\usepackage[ruled,linesnumbered]{algorithm2e}

\usepackage{tikz}

\usepackage{pgfplots}
\pgfplotsset{compat=1.18}
\usepackage[mathscr]{euscript}
 \let\mathscr\relax
\usepackage[scr]{rsfso}

\usepackage{multicol}
\usepackage{multirow}
\usepackage{subfiles}

\newcommand{\fnorm}{\Vert_{\mathrm{F}}}

\newcommand{\gi}{G_{i}}
\newcommand{\gj}{G_{j}}

\DeclareMathOperator{\vect}{vec}
\DeclareMathOperator{\BlkDiag}{BlkDiag}
\newcommand{\SEsync}{$\mathrm{SE}$-Sync\xspace}

\newcommand{\tr}{\text{Tr}}
\newcommand{\od}{\mathrm{O}(d)}
\newcommand{\sod}{\mathrm{SO}(d)}
\newcommand{\SEd}{\mathrm{SE}(d)}
\newcommand{\Mt}{M_t}

\newcommand{\identity}{I_d}

\theoremstyle{remark}

\newtheorem{lemma}{Lemma}

\newtheorem{Theorem}{Theorem}

\newtheorem{Proposition}{Proposition}

\numberwithin{Theorem}{section}
\numberwithin{Definition}{section}
\numberwithin{lemma}{section}
\numberwithin{Algorithm}{section}
\numberwithin{equation}{section}
\numberwithin{Corollary}{section}
\numberwithin{problem}{section}

\title{Anchored Spectral Estimator for Rigid Motion Synchronization}
\name{Ziyue Zhao$^{\dagger}$\qquad Huikang Liu$^{\star}$\qquad Man-Chung Yue$^{\mathsection}$}
\address{$^{\dagger}$Department of Data and Systems Engineering, The University of Hong Kong\\$^{\star}$Antai College of Economics and Management, Shanghai Jiao Tong University\\$^{\mathsection}$Department of Applied Mathematics, The Hong Kong Polytechnic University\\
\url{zyuezhao@connect.hku.hk}, \url{hkl1u@sjtu.edu.cn}, \url{manchung.yue@polyu.edu.hk}
}
\begin{document}
\ninept
\maketitle
\begin{abstract}
A rigid motion in $\mathbb{R}^d$ consists of a proper rotation and a translation, and it can be represented as a matrix in $\mathbb{R}^{(d+1)\times (d+1)}$. The problem of rigid motion synchronization aims to estimate a collection of rigid motions $G^*_1, \dots, G^*_n$ from noisy observations of their comparisons ${G^*_i}^{-1} G^*_j$. Such problems naturally arise in diverse applications across signal processing, robotics, and computer vision, and have thus attracted intense research attention in recent years. Motivated by geometric considerations, this paper develops a novel spectral approach for rigid motion synchronization, called the anchored spectral estimator (ASE). Theoretically, we establish uniform estimation error bounds for the estimators produced by ASE. Empirically, we show that ASE outperforms the widely used two-stage approach, which first estimates the rotations and then the translations. Further numerical experiments on the multiple point-set registration problem are presented to demonstrate the superiority of ASE over state-of-the-art methods.
\end{abstract}
\begin{keywords}
Special Euclidean group synchronization, Rigid motions, Spectral method, Estimation error bound
\end{keywords}

\section{Introduction}
\label{sec:intro}

A rigid motion in $\mathbb{R}^d$ consists of a proper rotation and a translation and can be represented as a $(d+1)\times (d+1) $ matrix. The problem of rigid motion synchronization is to recover a collection of rigid motions $G_1^*,\dots, G_n^*\in \SEd$ from the noisy observations of their pairwise comparisons $ G_i^{*-1} G_j^{*}$. Since the set of rigid motions forms the special Euclidean group~$\SEd$, the problem is also called special Euclidean group synchronization. We denote it by \SEsync for simplicity. \SEsync finds a wide range of applications across many scientific and engineering domains, including simultaneous localization and mapping~\cite{briales2017cartan, chen2025non}, structure from motion~\cite{eriksson2019rotation, ozyecsil2017survey}, multiple point-set registration~\cite{yew2021learning}, sensor network localization~\cite{so2007theory}, and cryogenic electron microscopy~\cite{singer2011three}. 

\SEsync is very challenging from a computational perspective, hindering its real-world applications. Indeed, it contains NP-hard problems as special cases~\cite{liu2017estimation}. Moreover, the natural approach via its least square estimator results in a non-convex and nonlinear constrained optimization problem. Existing approaches to \SEsync and more generally other synchronization problems can roughly be divided into three categories: semidefinite relaxations (SDR), non-convex algorithms, and spectral estimators. The SDR approach relaxes the non-convex least square estimation problem to a convex semidefinite programming problem by lifting techniques~\cite{bandeira2017tightness, carlone2015lagrangian, rosen2019se, ling2023solving}. Many SDRs for synchronization problems are proved to be tight ({\it i.e.}, the optimal solutions to the relaxed problem are also
optimal to the original non-convex problem). However, the resulting semidefinite programming problem is of dimension $O(nd) \times O(nd)$ and hence prohibitively expensive to solve. Non-convex algorithms directly solve the non-convex optimization problem associated with the least squares estimator by iterative local search algorithms~\cite{boumal2016nonconvex, liu2017estimation, ling2022improved, gao2022iterative, liu2023unified, gao2023optimal}. These algorithms often enjoy strong statistical guarantees if they are suitably initialized~\cite{boumal2016nonconvex, liu2017estimation, ling2022improved, gao2022iterative, liu2023unified, gao2023optimal}. Finally, spectral estimators first compute a small number of eigenvectors of a data matrix constructed from the comparison observations and then generate the output by applying a certain rounding procedure to the eigenvectors~\cite{singer2011angular, cucuringu2012sensor, arrigoni2020synchronization, arrigoni2016spectral, hadi2024se, doherty2022performance}. Although spectral estimators are used as an initialization in many non-convex algorithms, they often enjoy an estimation error that is of the same order as those non-convex algorithms~\cite{zhang2024exact}. Furthermore, they are highly efficient and easy to implement. In view of these advantages, this paper focuses on spectral estimators.

In the context of \SEsync, there have been a number of existing spectral estimators, and they can be subdivided into two types. The first type includes~\cite{hartley2013rotation, govindu2001combining} and adopts a divide-and-conquer, two-stage strategy that first estimates the rotations using only the rotation comparisons extracted from the full observations, but ignores the translation comparisons. The translations are then estimated by a separate procedure based on the rotations estimated in the first stage. In contrast, the second type jointly estimates these two constituents of the rigid motions. Examples of the second type include~\cite{arrigoni2016spectral, doherty2022performance, hadi2024se}. Since the translation comparisons are coupled with the rotations and hence can provide extra information for rotation estimation, the second type generally performs better than the first when the noise on the translation comparisons is not too large.

On the other hand, regardless of the type, the eigenvectors obtained from the first step of spectral estimators suffer from a symmetry issue; see Section~\ref{sec:ase} for details. If the issue is not properly handled, the performance of the spectral estimators can be significantly deteriorated. 

\subsection{Contributions}
In this paper, we propose and analyze a new spectral estimator for \SEsync. Our contributions are as follows.

\begin{itemize}
    \item First, exploiting the special geometry of the set of rotations, we develop a novel spectral estimator called the anchored spectral estimator (ASE), which uses anchored projections for the rounding step to tackle the symmetry issue.

    \item Second, we establish a strong theoretical guarantee for ASE, which asserts that its estimators enjoy a uniform estimation error bound of the order $O((\sigma_1 + \sigma_2^2)(\sqrt{d} + \sqrt{\log n})d/\sqrt{n})$, where $\sigma_1^2$ and $\sigma_2^2$ are the variances of the noise on the rotation and translation comparisons, respectively.

    \item Third, we empirically verify the advantage of ASE over a two-stage approach for solving \SEsync through synthetic data. We also demonstrate the superiority of ASE over state-of-the-art spectral estimators of the second type via a numerical experiment on the multiple point-set registration problem using real datasets.
\end{itemize}

\subsection{Notation}
For a vector $x$, $\|x\|_2$ denotes its 2-norm. 
For a matrix $X$, $X^\top$, $\|X\|$ and $\|X\|_{\mathrm{F}}$ denote its transpose, operator norm, and Frobenius norm, respectively. Also, $\vect(X)$ denotes the vector obtained by stacking its columns. Given a sequence $\{x_i\}_{i=1}^n\in \mathbb{R}^{a\times b}$ of vectors or matrices, $\BlkDiag (x_1,\dots, x_n) \in \mathbb{R}^{na\times nb}$ denotes the block matrix with its $i$-th diagonal block being $x_i$. Given two matrices $X$ and $Y$, their Kronecker product is denoted by $X\otimes Y$.
The $a\times a$ identity matrix and the all-one matrix are denoted by $I_a$ and $J_a$.
The sets of $a\times a$ orthogonal and special orthogonal matrices are denoted by $\mathrm{O}(a) = \{ Q\in \mathbb{R}^{a \times a}\mid Q Q^\top=I_a
\}$ and $\mathrm{SO}(a) = \{ Q\in \mathbb{R}^{a\times a}\mid QQ^\top=I_a,\ \operatorname{det}(Q)=1
\}$, respectively. We also denote by $[n]$ the set $\{1,\dots,n\}$.

\section{A New Spectral Method for \SEsync}\label{sec:gsp}

\subsection{\SEsync and Its Least Squares Estimator}
This section formally introduces the special Euclidean group synchronization problem (\SEsync) and our proposed spectral method ASE. To begin, recall that a rigid motion in $\mathbb{R}^d$ consists of a proper rotation $R\in \mathrm{SO}(d)$ and a translation $t\in\mathbb{R}^d$, and can be represented as a $(d+1)\times (d+1)$ matrix of the form
\begin{equation*}
    \begin{pmatrix}
        R^\top & t\\
    0 & 1
    \end{pmatrix}.
\end{equation*}
The set of all such matrices is called the special Euclidean group and denoted by $\mathrm{SE}(d)$. In \SEsync, we are interested in estimating a collection of rigid motions $G^*_1, \dots, G^*_n \in\SEd$ based on noisy observations of their comparisons $\gi^{*-1} \gj^*$. More precisely, we assume that we are given the following observations:
\begin{equation*}
    C_{ij} = \gi^{*-1} \gj^*+ W_{ij}, \quad i,j\in [n].
\end{equation*}
In this observation model, the matrix $W_{ij} \in \mathbb{R}^{(d+1)\times (d+1)}$ represents the noise and takes the form
\begin{equation*}
    W_{ij} = 
    \begin{pmatrix}
        W^R_{ij} & w^t_{ij}\\
        0 & 0
    \end{pmatrix},\quad  i,j \in [n],\  i \neq j,
\end{equation*}
where for any $i\neq j$, $W^R_{ij}\in \mathbb{R}^{d\times d}$ has i.i.d. Gaussian entries with mean $0$ and variance $\sigma_1^2$, $w^t_{ij} \in \mathbb{R}^d$ has i.i.d. Gaussian entries with mean $0$ and variance $\sigma_2^2$, and they are statistically independent; and for any $i\in [n]$, $W_{ii}=0$. For any $i\in [n]$, the rotation and translation associated with the ground truth $G^*_i$ are denoted by $R^*_i$ and $t^*_i$, respectively. Without loss of generality, we assume that $\sum_{i = 1}^n t^*_i = 0$.

The least squares estimator, which also coincides with the maximum likelihood estimator under our noise assumption, is defined as any optimal solution to the problem
\begin{equation}\label{opt01}
\begin{split}
    &\min_{G_i \in \SEd}\sum_{i=1}^n\sum_{j=1}^n \| G_i^{-1}G_j - C_{ij}\|_{\mathrm{F}}^2 =\\
    &\min_{\substack{R_i \in \sod, \\ t_i \in \mathbb{R}^d}} 
\sum_{i=1}^n \sum_{j=1}^n \left( 
\| R_i R_j^\top - S_{ij} \|_{\mathrm{F}}^2 + \| t_j - t_i - R^\top_i s_{ij} \|_2^2 
\right),
\end{split}
\end{equation}
where for all $i \neq j$, $S_{ij}=R_i^{*}R_j^{*\top}+W_{ij}^R \in \mathbb{R}^{d\times d}$ and $s_{ij}=s_{ij}^* + w_{ij}^t \in \mathbb{R}^d$ with $s_{ij}^*=R_i^{*}(t_j^*-t_i^*) \in \mathbb{R}^d$. 

Throughout the paper, we adopt the following convention to denote block matrices.
For any sequence $\{x_i\}_{i=1}^n\subseteq \mathbb{R}^{a\times b}$ and two-dimensional sequence $\{y_{ij}\}_{i,j = 1}^n \in \mathbb{R}^{a\times b}$ of vectors or matrices, we denote by $x\in \mathbb{R}^{na\times b}$ and $y\in\mathbb{R}^{na\times nb}$ the block matrices whose $i$-th and $ij$-th blocks are $x_i$ and $y_{ij}$, respectively. For example, $s\in \mathbb{R}^{nd\times n}$ is a block matrix whose $ij$-th block is $s_{ij} \in \mathbb{R}^d$.

We then simplify the least squares estimator~\eqref{opt01}. Let $L=nI_n-J_n$ be the Laplacian matrix of a complete graph, $S \in \mathbb{R}^{nd\times nd}$ be the block matrix whose $ij$-th block is $S_{ij}$, $\hat{T} \in \mathbb{R}^{nd\times n}$ be the block matrix $\hat{T} = \BlkDiag(\sum_{j=1}^n s_{1j}, \dots, \sum_{j=1}^n s_{nj})-s$, and $\hat{\Sigma} \in \mathbb{R}^{nd\times nd}$ be the block matrix $ \hat{\Sigma} = \BlkDiag(\sum_{j =1}^ns_{1j} s_{1j}^\top, \dots,\allowbreak \sum_{j =1}^n s_{nj} s_{nj}^\top)$. Then, we can equivalently reformulate problem~\eqref{opt01} as 
\begin{IEEEeqnarray}{rl}\label{opt02}
\min_{\substack{R \in \sod^n \\ t \in \mathbb{R}^{nd}}} \tr & \Big( 
2R^\top(nI_{nd} - S)R  +  \hat{\Sigma} \nonumber\\
& +  2t^{\top} (L \otimes I_d)t + 2 \mathrm{vec}(R^\top \hat{T})^\top t \Big).
\end{IEEEeqnarray} 
For any fixed $R$, problem~\eqref{opt02} is an unconstrained quadratic optimization and can be solved analytically with optimal solution being $t = -\frac{1}{2n} \vect(R^\top \hat{T} ) $; see~\cite[Lemma~4]{rosen2019se}. We can then eliminate the variable $t$ and further reformulate problem~\eqref{opt01} as
\begin{equation}\label{opt1}
\min_{R \in \sod^n}\tr ( R^\top\Omega R ),
\end{equation}
where 
\begin{equation}\label{eq:data matrix}
\Omega = 2nI_{nd}- 2S + \hat{\Sigma}-\frac{1}{2n}\hat{T}\hat{T}^\top.
\end{equation}

Since $\Omega$ is not positive semidefinite, the objective function $\tr ( R^\top\Omega R )$ is non-convex. Moreover, the block-wise $\mathrm{SO}(d)$ constraint is also non-convex and highly nonlinear. Problem~\eqref{opt1} is therefore difficult to solve.

\subsection{Anchored Spectral Estimator}\label{sec:ase}

We propose a new spectral estimator, called the anchored spectral estimator (ASE), for solving problem~\eqref{opt1} and thus \SEsync; see Algorithm~\ref{alg:algorithm for sed}. Roughly speaking, ASE first relaxes the block-wise special-orthogonality constraint $\sod^n$ to the orthogonality constraint $\Phi^\top \Phi = nI_d$, turning our task into an eigenvalue problem. After computing the $d$ eigenvectors $\Phi\in \mathbb{R}^{nd\times d}$ associated with the smallest eigenvalues of $\Omega$, we round each block $\Phi_i \in\mathbb{R}^{d\times d}$ by projecting a right-rotated version $\Phi_i \Phi_1^\top$ back onto $\mathrm{SO}(d)$ using the projection operator $\Pi_{\mathrm{SO}(d)}$. This yields the estimators $\hat{R}_i$ to the target rotations $R^*_i$ for $i\in [n]$. The projection $\Pi_{\mathrm{SO}(d)}$ can be computed efficiently via a singular value decomposition; see~\cite[Section~5.1.2]{liu2023unified}. Next, the translations are recovered as the optimal solutions to problem~\eqref{opt02} with a fixed $R = \hat{R}$. In other words, $\hat{t}_i = -(\frac{1}{2n}(\hat{T}^\top\otimes \identity)\vect(R^\top))_i = -\frac{1}{2n} ( \vect(\hat{R}^\top \hat{T} ))_i$ for $i\in[n]$. 

\begin{algorithm}[t!]
\caption{Anchored Spectral Estimator for $\SEd$}
\label{alg:algorithm for sed}
\KwIn{The data matrix $\Omega$ defined in \eqref{eq:data matrix}.}\KwOut{Rigid motions $ (\hat{R}_i, \hat{t}_i)$ for $i\in [n]$.}
Compute the eigenvectors $\Phi \in \mathbb{R}^{nd\times d}$ associated with the $d$ smallest eigenvalues of $\Omega$\;
For $i \in [n]$, compute
\[\hat{R}_i= \Pi_{\sod}(\Phi_i\Phi_1^\top) \quad\text{and}\quad \hat{t}_i = -\frac{1}{2n} ( \vect(\hat{R}^\top \hat{T} ))_i .\]
\end{algorithm}

A distinctive feature of ASE is that each block $\Phi_i$ is right-multiplied by $\Phi_1^\top$ before projecting onto $\mathrm{SO}(d)$. In the noiseless case, the block column matrix $\Phi$ consists of the true rotations up to right-multiplication by an unknown orthogonal matrix. The block $\Phi_1$ therefore acts as an anchor, eliminating this global orthogonal ambiguity and correctly locating the blocks $\Phi_i$ within $\mathrm{SO}(d)$. In the presence of noise, Lemma~\ref{le: rotation error bound} in Section~\ref{analysis of error bound} implies that $\Phi_1$ remains close to $R_1^\star$ (up to an orthogonal right-multiplication), and therefore the anchoring mechanism continues to be effective.

\subsection{Other Spectral Estimators}

The spectral method that is closest to ASE is the one developed in~\cite{doherty2022performance}, which uses the same matrix $\Omega$ for the eigenvalue problem in the first step. Nevertheless, unlike our anchored projection, their rounding procedure directly projects each block $\Phi_i$ onto $\sod$. 
Since the blocks $\Phi_i$ may have determinant $-1$, such a rounding procedure suffers from the risk of distorting their relative positions.

There have also been other spectral methods developed for \SEsync. The spectral method in \cite{arrigoni2016spectral} first computes the eigenvectors $\Phi'\in \mathbb{R}^{n(d+1)\times (d+1)}$ of a certain $n(d+1)\times n(d+1)$ data matrix and then projects the blocks $\Phi'_i$ onto $\SEd$. As another example, a spectral method for \SEsync is developed in~\cite{hadi2024se} based on the dual quaternion representation of rigid motions. The method seems to be specific to $d = 3$, and it is unclear how it can be generalized to other dimensions.

We have conducted an experiment on the multiple point-set registration problem to compare our ASE with the spectral methods in \cite{arrigoni2016spectral}, \cite{hadi2024se}, and \cite{doherty2022performance}. The experiment results show that ASE performs the best in terms of estimation error.

\section{Estimation Error Bound}\label{analysis of error bound}

The proposed ASE enjoys a strong theoretical guarantee on its estimation error.
Specifically, we show that the (unsquared) $\ell_2$ estimation error of the estimators $\hat{G}_1,\dots, \hat{G}_n$ enjoys a uniform bound of the order $O((\sigma_1 + \sigma_2^2)(\sqrt{d} + \sqrt{\log n})d/\sqrt{n})$. To state the result, we let $\Mt = \max_{i\in[n]}\| t_i^* \|_2 $.
\begin{Theorem}\label{thm: Theorem of Spectral method in Gaussian noise model} 
Consider Algorithm~\ref{alg:algorithm for sed}.
There exist absolute constants $c_0, c_1, c_2, c_3, c_4, c_5 >0 $ such that if $c_1 \sigma_1 + c_2 \Mt \sigma_2 + c_3 \sigma_2^2 \le \frac{c_0\sqrt{n}}{\sqrt{d}+\sqrt{\log(n)}}$, then
\begin{equation*}
    \begin{split}
        &\max_{i\in [n]}\min_{Q\in \SEd}~\| \hat{G_i} -Q\gi^* \|_{\mathrm{F}} \\
     \le & c_4 \Mt(\Mt^2+1)^2(c_1\sigma_1+c_2\Mt \sigma_2+c_3\sigma_2^2)\frac{(d\sqrt{d}+d\sqrt{\log n})}{\sqrt{n}},
    \end{split}
\end{equation*}
with probability at least $1- c_5 n^{-1}$.
\end{Theorem}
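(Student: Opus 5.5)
We plan to follow the standard route for spectral synchronization estimators: a perturbation decomposition of $\Omega$, a blockwise (leave-one-out style) eigenvector bound, stability of the anchored projection, and propagation to the translations.

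\emph{Step 1: the noiseless matrix.} We write $\Omega = \Omega^* + \Delta$, where $\Omega^*$ is built from $S^*_{ij}=R_i^*R_j^{*\top}$ and $s^*_{ij}$. Expanding \eqref{eq:data matrix}, we expect to verify that $\Omega^* R^* = 0$. We also expect $\Omega^*$ to have a spectral gap of order $n$ on the orthogonal complement of $\mathrm{span}(R^*)$, with the constant depending on $\Mt$, since the translation terms $\hat\Sigma^*$ and $\hat T^*\hat T^{*\top}/(2n)$ are bounded by $O(n\Mt^2)$ blockwise.

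\emph{Step 2: the noise.} We split $\Delta$ into three parts:
\begin{itemize}
\item a linear rotation-noise term $-2W^R$;
\item linear translation-noise terms in $\hat\Sigma$ and $\hat T\hat T^\top$, each involving $s^*_{ij}w_{ij}^{t\top}$;
\item quadratic terms $w^t_{ij}w^{t\top}_{ij}$.
\end{itemize}
Standard Gaussian matrix concentration and union bounds then give, with probability at least $1-c_5n^{-1}$,
\[
\|\Delta\|\lesssim (\sigma_1+\Mt\sigma_2+\sigma_2^2)\sqrt{n}\,(\sqrt d+\sqrt{\log n}).
\]
The same order bound holds for the block-row quantities $\|\Delta_{i\cdot}R^*\|$; this is where the $\sqrt{\log n}$ enters. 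Combined with the hypothesis $c_1\sigma_1+c_2\Mt\sigma_2+c_3\sigma_2^2\le c_0\sqrt n/(\sqrt d+\sqrt{\log n})$, this makes $\|\Delta\|$ a small fraction of the gap.

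\emph{Step 3: blockwise eigenvector error.} This is the main obstacle. Davis–Kahan only gives $\min_{O\in\od}\|\Phi - R^*O\|_F\lesssim \sqrt{nd}\,\|\Delta\|/\mathrm{gap}$, whereas the theorem requires a uniform bound $\max_i\|\Phi_i-R_i^*O\|\lesssim \epsilon := (\ldots)(\sqrt d+\sqrt{\log n})\sqrt d/\sqrt n$. I would derive it from the eigen-equation
\[
\Phi\Lambda = \Omega\Phi,
\]
which gives $\Phi_i$ in terms of $(\Omega\Phi)_i$. Then $(\Omega\Phi)_i = (\Omega^*\Phi)_i + (\Delta R^*O)_i + (\Delta(\Phi - R^*O))_i$. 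The diagonal part of $\Omega^*$ is dominant, and $\Lambda$ is small by Weyl's inequality. The first two terms are controlled by the row bounds from Step 2. The cross term $\Delta_{i\cdot}(\Phi-R^*O)$ is where a leave-one-out argument would be needed if a sharp bound were required. I would first try the cruder bound $\|\Delta_{i\cdot}\|\,\|\Phi-R^*O\|$, which the extra $d$ and $\Mt(\Mt^2+1)^2$ factors in the statement suggest is affordable. This step is presumably Lemma~\ref{le: rotation error bound}.

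\emph{Step 4: anchored rounding and translations.} Since $\Phi_i\Phi_1^\top \approx R_i^*R_1^{*\top}$ up to $O(\epsilon)$, and $R_i^*R_1^{*\top}\in\sod$, the Lipschitz property of $\Pi_{\sod}$ near $\sod$ (valid once $\epsilon$ is below a constant) gives
\[
\|\hat R_i - R_i^*R_1^{*\top}\|_F\lesssim\epsilon .
\]
Because the anchor product has determinant $+1$ automatically, there is no reflection ambiguity. For translations, $\hat t = -\frac{1}{2n}\vect(\hat R^\top\hat T)$; we compare this with the same formula evaluated at the global rotation $R^*R_1^{*\top}$ with noiseless $\hat T^*$, which returns $t^*$ transformed by the common rigid motion (with $\sum t_i^*=0$). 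The difference is bounded by
\[
\tfrac1{2n}\bigl(\|\hat R-R^*R_1^{*\top}\|_{\text{blockwise}}\,\|\hat T^*\| + \|\hat T-\hat T^*\|\bigr)\lesssim \Mt\,\epsilon + \sigma_2\sqrt{(d+\log n)/n}.
\]
Finally, taking $Q\in\SEd$ as the common rigid motion yields the stated bound. The polynomial factors in $\Mt$ absorb the constants from the gap and the norm of $\hat T^*$.
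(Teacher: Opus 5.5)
Your Step~3 has a genuine gap: the crude bound you propose there cannot deliver the theorem in the noise regime it allows. Write $\epsilon_0=c_1\sigma_1+c_2M_t\sigma_2+c_3\sigma_2^2$, so that $\|\Delta\|\lesssim\epsilon_0\sqrt n(\sqrt d+\sqrt{\log n})$ and your target is $\epsilon\asymp\sqrt d\,\|\Delta\|/n$.

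\begin{itemize}
\item The block row $\Delta_i$ is not smaller than $\Delta$ in operator norm; already $\|W^R_i\|\asymp\sigma_1\sqrt{nd}\asymp\|W^R\|$.
\item Davis--Kahan only gives $\|\Phi-R^*O\|_{\mathrm F}\lesssim\sqrt d\,\|\Delta\|/\sqrt n$.
\item After dividing by the diagonal dominance $2n$, the term $\|\Delta_i\|\,\|\Phi-R^*O\|_{\mathrm F}$ therefore contributes $\sqrt d\,\|\Delta\|^2/n^{3/2}$. That is $\epsilon$ times $\|\Delta\|/\sqrt n$.
\item The hypothesis allows $\|\Delta\|$ up to $c_0 n$, so this factor can be of order $\sqrt n$. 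It is still $\sqrt{n/d}$ if you spend the extra $\sqrt d$ in the final rate. A power of $n$ cannot be absorbed by factors of $d$ or $M_t$.
\end{itemize}

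Concretely, at $\epsilon_0\asymp n^{1/4}$ the theorem asserts an error that vanishes as $n\to\infty$, while your bound is of order $\sqrt d(\sqrt d+\sqrt{\log n})^2$. The same computation also fails to give $\max_i\|\Phi_i\|_{\mathrm F}=O((M_t^2+1)\sqrt d)$, which you need to pass from $\Phi_i$ to $\Phi_i\Phi_1^\top$. The crude route proves the statement only when $\|\Delta\|\lesssim\sqrt{nd}$, i.e.\ essentially bounded noise.

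The missing idea is the decoupling provided by the paper's leave-one-out step (Lemma~\ref{le: leave-one-out}). Let $\Phi^{(i)}$ be the bottom eigenvectors of $H^{(i)}$, in which the $i$-th block row and column of $W^R$, the $i$-th block row and column of $E$, and the $i$-th diagonal block of $\Delta_\Sigma$ are removed. Then
\[
\|\Delta_i\Phi\|_{\mathrm F}\le\|\Delta_i\|\,\|\Phi-\Phi^{(i)}S^{(i)}\|_{\mathrm F}+\|\Delta_i\Phi^{(i)}\|_{\mathrm F}.
\]
\begin{itemize}
\item The second term concentrates because $\Phi^{(i)}$ is independent of $\Delta_i$ (Proposition~\ref{prop: norm of Delta_sed M}).
\item For the first term, Davis--Kahan is applied to the perturbation $\Delta-\Delta^{(i)}$, which lives on a single block row and column. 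Up to powers of $M_t$, this gives $\|\Phi-\Phi^{(i)}S^{(i)}\|_{\mathrm F}\lesssim\sqrt d\,\|\Delta\|/n$.
\item That is a factor $\sqrt n$ better than the global bound, which is exactly what you lose. The first term then contributes only $O(\epsilon\,\|\Delta\|/n)=O(c_0\epsilon)$.
\end{itemize}

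The same dependence issue reappears in your Step~4. The noise part of $\hat t_i$ contains $\frac1{2n}\sum_k\hat R_k^\top w^t_{ki}$, and $\hat R$ depends on the column $(w^t_{1i},\dots,w^t_{ni})$.
\begin{itemize}
\item Your bound $\frac1{2n}\|\hat T-\hat T^*\|\lesssim\sigma_2\sqrt{(d+\log n)/n}$ drops the factor $\|\hat R\|=\sqrt n$. An honest norm bound is $\frac1{2n}\sqrt n\cdot\sigma_2\sqrt{nd}\asymp\sigma_2\sqrt d$, which does not decay with $n$.
\item Splitting off the deterministic part $R^*R_1^{*\top}$ recovers your rate for that piece. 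The remainder $\frac1{2n}\sum_k(\hat R_k-R_k^*R_1^{*\top})^\top w^t_{ki}$ is then bounded crudely only by $\sigma_2\sqrt d\,\max_k\|\hat R_k-R_k^*R_1^{*\top}\|_{\mathrm F}$.
\item This carries an extra factor $\sigma_2\sqrt d$ that the hypothesis does not keep bounded, since it lets $\sigma_2$ grow polynomially in $n$.
\end{itemize}
The paper treats this term (the last one in \eqref{eqt0}) again by inserting $\Phi^{(i)}S^{(i)}$, which is independent of the $i$-th column of $w^t$.

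The remaining ingredients of your outline agree with the paper: the noise bound on $\|\Delta\|$, the anchoring, and the noiseless translation identity. One small correction to Step~1: the gap is at least $2n$ with no dependence on $M_t$. This is because $\Sigma^*-\frac1{2n}T^*T^{*\top}$ is positive semidefinite (a Schur complement of the translation least-squares form) and annihilates $R^*$.
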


\subsection{Proof Outline}
We next outline the proof of Theorem~\ref{thm: Theorem of Spectral method in Gaussian noise model}, which mainly consists of the three lemmas below. The first one decomposes the (uniform) estimation error of estimated rigid motions into two parts: a (scaled) rotation error and a translation error. This lemma is proved based on \cite[Lemma~2]{liu2023unified}.

\begin{lemma}\label{lem:step 1}
Consider Algorithm~\ref{alg:algorithm for sed}. There exists an absolute constant $c>0$ such that 

\begin{equation*}
    \begin{split}
        & \max_{ i\in[n]} \min_{Q \in \SEd}~\| \hat{G_i} -Q\gi^*\|_{\mathrm{F}} \\
        \le & c \max_{i\in [n]} \| \Phi_i \|\underbrace{ \max_{i\in [n]}\| \Phi_i - R_i^{*} \Bar{Q} \|_{\mathrm{F}}}_{\text{rotation error}}+\underbrace{\max_{i\in [n]} \| \hat{t}_i -(\Pi_{\sod}(\Bar{Q}\Phi_1^\top))^\top t^*_i \|_2 }_{\text{translation error}},
    \end{split}
\end{equation*}
where $\Bar{Q} = \arg \min _{Z \in \od}\allowbreak \| \Phi -R^*Z \|_{\mathrm{F}} $.
\end{lemma}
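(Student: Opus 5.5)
We fix a single candidate $Q\in\SEd$ for all $i$ and bound the rotation and translation blocks of $\hat G_i - Q G_i^*$ separately. Let $P := \Pi_{\sod}(\Bar{Q}\Phi_1^\top)\in\sod$, and take
\[
Q = \begin{pmatrix} P^\top & 0\\ 0 & 1\end{pmatrix}\in\SEd .
\]
Since $G_i^* = \begin{pmatrix} R_i^{*\top} & t_i^*\\ 0 & 1\end{pmatrix}$ and $\hat G_i = \begin{pmatrix} \hat R_i^\top & \hat t_i\\ 0 & 1\end{pmatrix}$, we get
\[
\hat G_i - QG_i^* = \begin{pmatrix} \hat R_i^\top - P^\top R_i^{*\top} & \hat t_i - P^\top t_i^*\\ 0&0\end{pmatrix}.
\]
The triangle inequality for the Frobenius norm over the two nonzero blocks gives
\[
\|\hat G_i - QG_i^*\|_{\mathrm{F}} \le \|\hat R_i - R_i^*P\|_{\mathrm{F}} + \|\hat t_i - P^\top t_i^*\|_2 .
\]
The second term is exactly the translation error in the statement. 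It remains to show $\|\hat R_i - R_i^* P\|_{\mathrm{F}} \le c\max_j\|\Phi_j\|\max_j\|\Phi_j - R_j^*\Bar{Q}\|_{\mathrm{F}}$.

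\textbf{Rotation term.} By definition, $\hat R_i = \Pi_{\sod}(\Phi_i\Phi_1^\top)$. We compare $\Phi_i\Phi_1^\top$ with $R_i^*\Bar{Q}\Phi_1^\top$ and write
\[
R_i^*\Bar Q\Phi_1^\top = R_i^*\,(\Bar Q\Phi_1^\top).
\]
Left multiplication by $R_i^*\in\sod$ commutes with $\Pi_{\sod}$, so
\[
\Pi_{\sod}(R_i^*\Bar Q\Phi_1^\top) = R_i^*\,\Pi_{\sod}(\Bar Q\Phi_1^\top) = R_i^* P .
\]
Hence
\[
\|\hat R_i - R_i^*P\|_{\mathrm{F}} = \|\Pi_{\sod}(\Phi_i\Phi_1^\top) - \Pi_{\sod}(R_i^*\Bar Q\Phi_1^\top)\|_{\mathrm{F}} .
\]
We then apply the Lipschitz-type perturbation bound for $\Pi_{\sod}$ from \cite[Lemma~2]{liu2023unified}. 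That bound has the form $\|\Pi_{\sod}(X)-\Pi_{\sod}(Y)\|_{\mathrm{F}}\le c\|X-Y\|_{\mathrm{F}}$, valid when $Y$ is well conditioned relative to $\|X-Y\|$.

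This step is the main obstacle. $\Pi_{\sod}$ is not globally Lipschitz: it is discontinuous where the smallest singular value degenerates or the determinant changes sign. The anchor matrix $\Bar Q\Phi_1^\top$ must therefore be close to an orthogonal matrix, with singular values bounded away from zero. This follows from $\|\Phi_1 - R_1^*\Bar Q\|_{\mathrm{F}}$ being small. In that regime, $\Bar Q\Phi_1^\top$ is near $R_1^{*\top}$ and $\det(\Bar Q\Phi_1^\top)$ has the correct sign. In the complementary regime, where $\max_j\|\Phi_j-R_j^*\Bar Q\|_{\mathrm F}$ exceeds a constant, the bound holds trivially after enlarging $c$. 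This is because $\|\hat R_i - R_i^*P\|_{\mathrm F}\le 2\sqrt d$, and $\max_j\|\Phi_j\|\ge\|\Phi_1\|$ is bounded below in terms of $\sqrt{\sum_j\|\Phi_j\|_{\mathrm F}^2/(nd)} = 1$ from $\Phi^\top\Phi = nI_d$. If the absorbed factor of $d$ cannot be hidden in an absolute constant, I would instead invoke exactly the hypothesis form of \cite[Lemma~2]{liu2023unified}.

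\textbf{Assembling the bound.} The projection bound gives
\[
\|\hat R_i - R_i^*P\|_{\mathrm F}\le c\|(\Phi_i - R_i^*\Bar Q)\Phi_1^\top\|_{\mathrm F}\le c\|\Phi_1\|\,\|\Phi_i - R_i^*\Bar Q\|_{\mathrm F}\le c\max_j\|\Phi_j\|\max_j\|\Phi_j-R_j^*\Bar Q\|_{\mathrm F}.
\]
Taking the maximum over $i$, and noting that $\min_Q$ is at most the value at our chosen $Q$, completes the proof.
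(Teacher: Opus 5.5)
\textbf{Verdict: there is a genuine gap in the rotation term.} Your choice $Q=\begin{pmatrix}P^\top&0\\0&1\end{pmatrix}$ with $P=\Pi_{\sod}(\Bar Q\Phi_1^\top)$, the block-wise triangle inequality, and the translation term all match the paper's proof. The equivariance $\Pi_{\sod}(R_i^*Y)=R_i^*\Pi_{\sod}(Y)$ is also correct.

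The problem is that you reduce the rotation term to a two-sided Lipschitz estimate $\|\Pi_{\sod}(X)-\Pi_{\sod}(Y)\|_{\mathrm F}\le c\|X-Y\|_{\mathrm F}$ at the non-orthogonal point $Y=R_i^*\Bar Q\Phi_1^\top$, and you attribute it to \cite[Lemma~2]{liu2023unified}. The paper does not use that lemma in this form. As used in the paper (see also Lemma~\ref{eq: n projection inequality}), it says only $\|\Pi_{\sod}(Y)-X\|_{\mathrm F}\le 2\|Y-X\|_{\mathrm F}$ for $X\in\sod$. That is the nearest-point property plus the triangle inequality.

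A genuine Lipschitz bound at non-orthogonal $Y$ would need three things you do not supply:
\begin{itemize}
\item $\sigma_{\min}(\Phi_1)$ bounded away from zero;
\item $\det(\Bar Q\Phi_1^\top)>0$;
\item a separate perturbation theorem for the polar factor, plus an argument handling a determinant sign flip of $X$.
\end{itemize}
Your fallback for the complementary regime does not close the gap. Bounding $\|\hat R_i-R_i^*P\|_{\mathrm F}\le 2\sqrt d$ against a right-hand side known only to be at least $c\,\epsilon_0$ forces $c\gtrsim\sqrt d/\epsilon_0$, which is not an absolute constant. You flag this yourself but leave it unresolved. Also, $\Phi^\top\Phi=nI_d$ gives $\max_j\|\Phi_j\|\ge 1$, not a lower bound on $\|\Phi_1\|$.

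\textbf{The fix: do not compare two projections.} Compare $\Pi_{\sod}(\Phi_i\Phi_1^\top)$ directly with the element $R_i^*P\in\sod$, using the decomposition $\Phi_i\Phi_1^\top-R_i^*P=(\Phi_i-R_i^*\Bar Q)\Phi_1^\top+R_i^*(\Bar Q\Phi_1^\top-P)$. This gives
\[
\|\hat R_i-R_i^*P\|_{\mathrm F}\le 2\|\Phi_i\Phi_1^\top-R_i^*P\|_{\mathrm F}\le 2\|\Phi_1\|\,\|\Phi_i-R_i^*\Bar Q\|_{\mathrm F}+2\|\Bar Q\Phi_1^\top-P\|_{\mathrm F}.
\]
Since $P$ is the nearest point of $\sod$ to $\Bar Q\Phi_1^\top$ and $R_1^{*\top}\in\sod$,
\[
\|\Bar Q\Phi_1^\top-P\|_{\mathrm F}\le\|\Bar Q\Phi_1^\top-R_1^{*\top}\|_{\mathrm F}=\|\Phi_1-R_1^*\Bar Q\|_{\mathrm F}.
\]
Now use $\max_j\|\Phi_j\|\ge 1$ to absorb this last term. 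This yields the claim with $c=4$, with no conditioning assumption and no case split.

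This is exactly the paper's argument. The anchor's deviation from $\sod$ is paid for by the rotation error of block $1$ itself, rather than by any regularity of $\Pi_{\sod}$.
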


The second lemma establishes a bound on the (scaled) rotation error, which as discussed in Section~\ref{sec:ase}, provides the theoretical underpinning for the anchoring mechanism of ASE.

\begin{lemma}\label{le: rotation error bound}
Consider Algorithm~\ref{alg:algorithm for sed}. There exist absolute constants $c_0, c_1, c_2, c_3, c_4, c_5 >0 $ such that if $c_1 \sigma_1 + c_2 \Mt \sigma_2 + c_3 \sigma_2^2 \le \frac{c_0\sqrt{n}}{\sqrt{d}+\sqrt{\log(n)}}$, then
\begin{equation*}
    \begin{split}
        &\max_{i\in [n]} \| \Phi_i \| \max_{i\in [n]}\| \Phi_i -R^*_i \Bar{Q} \|_{\mathrm{F}}  \\
        & \le c_4 (\Mt^2+1)^2 (c_1\sigma_1 + c_2 \Mt \sigma_2 + c_3\sigma_2^2) \frac{(d\sqrt{d}+d\sqrt{\log n})}{\sqrt{n}},
    \end{split}
\end{equation*}
with probability at least $1- c_5 n^{-1}$, where $\Bar{Q}$ is defined in Lemma~\ref{lem:step 1}.
\end{lemma}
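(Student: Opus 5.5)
We would prove Lemma~\ref{le: rotation error bound} by a leave-one-out perturbation argument for the bottom-$d$ eigenspace of $\Omega$, in the style of \cite{liu2023unified, zhang2024exact}.

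\textbf{Step 1: split $\Omega$ into signal and noise.} Write $\Omega = \Omega^* + \Delta$, where $\Omega^*$ is the data matrix~\eqref{eq:data matrix} built from noiseless observations. In the noiseless case the least squares cost vanishes at the truth. Hence $\Omega^* R^* = 0$ and $\Omega^*\succeq 0$, because $\tr(X^\top\Omega^* X)$ is the least squares cost with $t$ minimized out and that cost is a sum of squares. We then show that the smallest nonzero eigenvalue of $\Omega^*$ is of order $n/(\Mt^2+1)$. We would get this from the form $2(nI-R^*R^{*\top}) + \hat\Sigma^* - \frac{1}{2n}\hat T^*\hat T^{*\top}$, which after conjugating by $\BlkDiag(R^*_i)$ becomes a translation-dependent Schur-complement correction of $2(L\otimes I_d)$. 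This is where the $(\Mt^2+1)$ factors enter.

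\textbf{Step 2: bound the noise operator norm.} The perturbation $\Delta$ consists of four pieces:
\begin{itemize}
\item $-2W^R$;
\item cross terms $s^*w^{t\top}$ appearing in $\hat\Sigma$ and $\hat T\hat T^\top$, which are linear in $\sigma_2$ with coefficients of size $\Mt$;
\item quadratic terms in $w^t$, of size $\sigma_2^2$;
\item diagonal-block sums.
\end{itemize}
Standard Gaussian matrix bounds (Bernstein plus a union bound) give, with probability $1-O(n^{-1})$,
\[
\|\Delta\|\lesssim (\sigma_1+\Mt\sigma_2+\sigma_2^2)\bigl(\sqrt{nd}+\sqrt{n\log n}\bigr).
\]
Under the hypothesis this is a small fraction of the spectral gap. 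Davis--Kahan then gives $\|\Phi-R^*\Bar Q\|_{\mathrm F}\lesssim \sqrt{d}\,\|\Delta\|(\Mt^2+1)/n\cdot\sqrt n$, using the normalization $\Phi^\top\Phi=nI_d$.

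\textbf{Step 3: upgrade to a blockwise bound.} This is the main obstacle. The entrywise (per-block) bound needs
\[
\max_i\|\Phi_i-R_i^*\Bar Q\|_{\mathrm F}\lesssim (\Mt^2+1)^2(\sigma_1+\Mt\sigma_2+\sigma_2^2)\,d(\sqrt d+\sqrt{\log n})/\sqrt n .
\]
We would use the eigen-equation $\Phi_i = (\Omega\Phi\Lambda^{-1})_i$, or more precisely write $\Phi_i$ through $(\Omega^*+\Delta)_{i\cdot}\Phi$ after shifting to a PSD matrix. We then construct leave-one-out matrices $\Omega^{(i)}$ that zero out the noise in block row/column $i$. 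The terms $\Delta_{i\cdot}\Phi$ are controlled by writing $\Phi = \Phi^{(i)} + (\Phi-\Phi^{(i)})$. The first piece is independent of row $i$'s noise, so a Gaussian concentration plus a union bound over $i$ gives $\sqrt{d}+\sqrt{\log n}$ factors. The second piece is bounded by Davis--Kahan applied to $\Omega-\Omega^{(i)}$.

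The hard part is that the translation noise enters $\Omega$ nonlinearly through $\hat T\hat T^\top/(2n)$ and $\hat\Sigma$. This couples block rows, so leave-one-out independence is only approximate and requires separate handling of the $\sigma_2^2$ terms. We would first treat them as a deterministic-plus-diagonal perturbation with a small operator norm.

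\textbf{Step 4: finish.} Finally, $\max_i\|\Phi_i\|\le 1+\max_i\|\Phi_i-R_i^*\Bar Q\|\le 2$ under the hypothesis. Multiplying this by the Step~3 bound yields the claimed estimate, adjusting constants $c_0,\dots,c_5$.
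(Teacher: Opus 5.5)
Your skeleton (signal-plus-noise split, an $\ell_2$ Davis--Kahan bound, then leave-one-out for $\Delta_i\Phi$) matches the paper's. However, Step 3 is missing the mechanism that actually produces the per-block rate. The formula $\Phi_i=(\Omega\Phi\Lambda^{-1})_i$ is useless, because after centering the bottom eigenvalues satisfy $\|\Lambda\|\le\|\Delta\|$. Shifting does not help either, since no shift makes the residual signal low rank. Write $B:=\BlkDiag(R_1^*,\dots,R_n^*)$. Then $\Sigma^*-\frac{1}{2n}T^*T^{*\top}=B(\Xi^*-\Upsilon^*)B^\top$, where $\Xi^*$ is block diagonal with $\Xi^*_{ii}=\frac n2 t_i^*t_i^{*\top}+\sum_j t_j^*t_j^{*\top}$, which has size $\Theta(n\Mt^2)$. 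So bounding $(\Omega^*)_{i\cdot}(\Phi-R^*\bar Q)$ by a block-row norm gives $\sim n(\Mt^2+1)\|\Phi-R^*\bar Q\|$, with no $1/\sqrt n$ decay. The paper isolates this diagonal block. From $\Phi_i\Lambda=(H\Phi)_i$ and $(H-\Delta)R^*=0$ it obtains
\[
(2nI_d+R_i^*\Xi^*_{ii}R_i^{*\top})(\Phi_i-R_i^*\bar Q)=R_i^*\bigl(2R^{*\top}+\Upsilon^*_iB^\top\bigr)(\Phi-R^*\bar Q)-\Delta_i\Phi+\Phi_i\Lambda .
\]
The left coefficient is $\succeq 2nI_d$ because $\Xi^*_{ii}\succeq 0$, while $\|2R^{*\top}+\Upsilon^*_iB^\top\|\le 2\sqrt n(\Mt^2+1)$. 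Combined with $\|\Phi-R^*\bar Q\|_{\mathrm F}\lesssim\sqrt d\|\Delta\|/\sqrt n$, this yields the $(\Mt^2+1)\sqrt d\|\Delta\|/n$ term. This, not the spectral gap, is where the $(\Mt^2+1)$ factors enter.

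Three other steps fail as written.

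(a) $\Omega-\Omega^*$ is not small. The blocks $\sum_j w^t_{ij}w^{t\top}_{ij}$ have mean $(n-1)\sigma_2^2 I_d$, so $\|\Omega-\Omega^*\|\approx n\sigma_2^2$, which under the hypothesis can far exceed the gap. You must work with $H=\Omega-(n-1)\sigma_2^2I_{nd}$, which has the same eigenvectors.

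(b) The gap is at least $2n$, not of order $n/(\Mt^2+1)$. This is because $\Sigma^*-\frac{1}{2n}T^*T^{*\top}$ is a PSD Schur complement that annihilates $R^*$. With the weaker gap you propose, the stated hypothesis (absolute constants, no dependence on $\Mt$) would not make $\|\Delta\|$ a small fraction of it.

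(c) $\max_i\|\Phi_i\|\le 2$ does not follow from the hypothesis: your own per-block bound is then only $\lesssim c_0(\Mt^2+1)^2 d$. More seriously, you need an a priori bound on $\max_i\|\Phi_i\|_{\mathrm F}$ already inside Step 3. It multiplies $\|\Lambda\|$, and it appears as $\sqrt n\|\Phi^{(i)}_i\|_{\mathrm F}$ when bounding $\Delta_i\Phi^{(i)}$ and $(\Delta-\Delta^{(i)})\Phi^{(i)}$. The paper breaks this circularity with a self-bounding argument. It combines $\|\Phi_i\|_{\mathrm F}\le\frac87(\Mt^2+1)\sqrt d+\frac{4}{7n}\|\Delta_i\Phi\|_{\mathrm F}$ with the leave-one-out estimate to get $\max_i\|\Phi_i\|_{\mathrm F}\lesssim(\Mt^2+1)\sqrt d$. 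The stated bound is this quantity times the per-block error $\lesssim(\Mt^2+1)(c_1\sigma_1+c_2\Mt\sigma_2+c_3\sigma_2^2)(d+\sqrt{d\log n})/\sqrt n$.
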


Lemma~\ref{le: rotation error bound} is derived via the popular leave-one-out technique. To understand the technique, we note that the data matrix $\Omega$ in \eqref{eq:data matrix} can be decomposed as the sum of an informative part and a noise matrix $\Delta$ and that the proof of Lemma~\ref{le: rotation error bound} involves bounding a certain product term between the eigenvectors $\Phi$ and the $i$-th block row $\Delta_i$ of the noise matrix $\Delta$, which are statistically dependent. This obstructs controlling the product term by using concentration inequalities, which rely on statistical independence. To tackle this issue, the leave-one-out technique approximates $\Phi$ by the eigenvectors $\Phi^{(i)}$ of the matrix obtained from deleting the $i$-th block row and $i$-th block column from the matrix $\Delta$. The upshot is that the approximate eigenvectors $\Phi^{(i)}$ are statistically independent of $\Delta_i$, and concentration inequalities can therefore be applied. We emphasize that although the leave-one-out technique has been applied to study other group synchronization problems~\cite{ling2022near,liu2023resync}, the analysis for \SEsync is different and requires additional techniques due to the more complicated structure of the noise matrix $\Delta$.

The final lemma provides a bound on the translation error, the proof of which is similarly based on the leave-one-out technique.

\begin{lemma}\label{le: translation error bound}
Consider Algorithm~\ref{alg:algorithm for sed}. There exist absolute constants $c_0, c_1, c_2, c_3, c_4, c_5 >0 $ such that if $c_1 \sigma_1 + c_2 \Mt \sigma_2 + c_3 \sigma_2^2 \le \frac{c_0\sqrt{n}}{\sqrt{d}+\sqrt{\log(n)}}$, then
\begin{equation*}
    \begin{split}
        &\max_{i\in[n]}\| \hat{t}_i - (\Pi_{\sod}(\Bar{Q}\Phi_1^\top))^\top t_i^* \|_2 \\ 
        & \le   c_4 \Mt(\Mt^2+1)^2( c_1 \sigma_1 + c_2\sigma_2 \Mt + c_3\sigma_2^2)\frac{(d\sqrt{d}+d\sqrt{\log n})}{\sqrt{n}},
    \end{split}
\end{equation*}
with probability at least $1-c_5 n^{-1}$, where $\Bar{Q}$ is defined in Lemma~\ref{lem:step 1}.
\end{lemma}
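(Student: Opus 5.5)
The statement to prove is Lemma~\ref{le: translation error bound}. Write $\hat Q := \Pi_{\sod}(\Bar{Q}\Phi_1^\top)$. The plan is to express $\hat t_i$ through the closed-form formula $\hat t = -\frac{1}{2n}\vect(\hat R^\top \hat T)$. This gives, for each $i$,
\[
\hat t_i = -\frac{1}{2n}\sum_{j}\big(\hat T_{ji}\big)^\top\text{-type terms}
= \frac{1}{2n}\Big(\sum_{j}\hat R_j^\top s_{ji} - \hat R_i^\top\sum_{j} s_{ij}\Big)
\]
up to the sign conventions of $\hat T$. We then compare this with the same formula evaluated at the ideal rotations $R_j^*\hat Q$.

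\textbf{Noiseless identity.} With $s^*_{ij}=R_i^*(t_j^*-t_i^*)$ and $\sum_j t_j^*=0$, one checks that
\[
\frac{1}{2n}\Big(\sum_j (R_j^*\hat Q)^\top s^*_{ji} - (R_i^*\hat Q)^\top\sum_j s^*_{ij}\Big)=\hat Q^\top t_i^*.
\]
Indeed, each sum contributes $n\hat Q^\top t_i^*$ after cancellation. Subtracting this identity splits the error $\hat t_i-\hat Q^\top t_i^*$ into three parts:
\begin{enumerate}[label=(\roman*)]
\item a rotation-perturbation term, $\frac{1}{2n}\sum_j(\hat R_j-R_j^*\hat Q)^\top s^*_{ji}$, together with the analogous $i$-term;
\item a pure noise term, $\frac{1}{2n}\big(\sum_j (R_j^*\hat Q)^\top w^t_{ji} - (R_i^*\hat Q)^\top\sum_j w^t_{ij}\big)$;
\item a cross term, $\frac{1}{2n}\sum_j(\hat R_j-R_j^*\hat Q)^\top w^t_{ji}$, together with its $i$-analogue.
\end{enumerate}

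\textbf{Terms (i) and (ii).} For term (i), use $\|s^*_{ji}\|_2\le 2\Mt$. This reduces term (i) to $\Mt\max_j\|\hat R_j-R_j^*\hat Q\|_{\mathrm F}$. By a standard stability property of $\Pi_{\sod}$ (as in \cite[Lemma~2]{liu2023unified}), the difference satisfies
\[
\|\hat R_j - R_j^*\hat Q\|_{\mathrm F}=\|\Pi_{\sod}(\Phi_j\Phi_1^\top)-\Pi_{\sod}(R_j^*\Bar Q\Phi_1^\top)\|_{\mathrm F}\lesssim \max_i\|\Phi_i\|\,\|\Phi_j-R_j^*\Bar Q\|_{\mathrm F}.
\]
This uses that $\Bar Q\Phi_1^\top$ is close to $\mathrm{SO}(d)$ so the projection is well conditioned, which itself follows from Lemma~\ref{le: rotation error bound}. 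Lemma~\ref{le: rotation error bound} then bounds the right-hand side, yielding the advertised $\Mt(\Mt^2+1)^2(\cdots)\frac{d\sqrt d+d\sqrt{\log n}}{\sqrt n}$ rate. For term (ii), $R_j^*\hat Q$ is deterministic given $\hat Q$, but $\hat Q$ depends on the data. To handle this, write $\hat Q^\top\sum_j R_j^{*\top}w^t_{ji}$; orthogonal invariance of the norm removes $\hat Q$. The remaining sum is Gaussian with covariance $n\sigma_2^2I_d$, so a union bound over $i$ gives $\sigma_2\sqrt n(\sqrt d+\sqrt{\log n})$ with probability $1-O(n^{-1})$. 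After dividing by $2n$, this is $\sigma_2(\sqrt d+\sqrt{\log n})/\sqrt n$, which is dominated by the claimed bound.

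\textbf{Main obstacle: the cross term (iii).} Here $\hat R_j$ depends on all the data, including $w^t_{ji}$, so naive concentration fails. A crude bound via Cauchy--Schwarz, $\frac{1}{2n}\big(\sum_j\|\hat R_j-R_j^*\hat Q\|_{\mathrm F}^2\big)^{1/2}\|w^t_{\cdot i}\|$, with $\|w^t_{\cdot i}\|\lesssim\sigma_2\sqrt{nd}$, may already suffice. This gives $\sigma_2\sqrt d\cdot\max_j\|\hat R_j-R^*_j\hat Q\|_{\mathrm F}$, which is of the target order times $\sigma_2\sqrt d$, and the hypothesis $c_3\sigma_2^2\le c_0\sqrt n/(\sqrt d+\sqrt{\log n})$ keeps this under control. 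If that is too lossy, the fallback is the leave-one-out approach: replace $\Phi$ by $\Phi^{(i)}$, which is independent of the $i$-th block row. Then bound $\sum_j(\Pi_{\sod}(\Phi^{(i)}_j\Phi_1^{(i)\top})-R_j^*\hat Q^{(i)})^\top w^t_{ji}$ by Gaussian concentration conditioned on $\Phi^{(i)}$. Finally, control $\|\Phi-\Phi^{(i)}\bar Q^{(i)}\|$ using the Davis--Kahan perturbation bounds established in the proof of Lemma~\ref{le: rotation error bound}. Collecting the three terms and adjusting constants gives the lemma.
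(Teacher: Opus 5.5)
Your decomposition is sound. The noiseless identity checks out, and isolating $\frac{1}{2n}\hat Q^\top\sum_j R_j^{*\top}w^t_{ji}$ as an exactly Gaussian term is a nice touch. The gap is the cross term (iii).

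Your Cauchy--Schwarz bound $\frac{\sigma_2\sqrt d}{2}\max_j\|\hat R_j-R_j^*\hat Q\|_{\mathrm{F}}$ exceeds the target by the factor $\sigma_2\sqrt d/\Mt$. The hypothesis does not control this factor: it only forces $\sigma_2\lesssim n^{1/4}(\sqrt d+\sqrt{\log n})^{-1/2}$, and even with $\sigma_2=\Mt=1$ you lose a full $\sqrt d$. So leave-one-out is the core of the proof, not a fallback.

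In the form you sketch, leave-one-out does not close under the stated hypothesis. You project the leave-one-out copy, $\Pi_{\sod}(\Phi^{(i)}_j\Phi_1^{(i)\top})$. Transferring the bound on $\|\Phi-\Phi^{(i)}S^{(i)}\|_{\mathrm{F}}$ back to $\hat R_j=\Pi_{\sod}(\Phi_j\Phi_1^\top)$ then needs a Lipschitz estimate for $\Pi_{\sod}$ between two matrices off the group. That in turn needs uniformly well-conditioned arguments with positive determinant. Under the stated hypothesis, \eqref{eq8} only gives $\max_j\|\Phi_j-R_j^*\Bar{Q}\|_{\mathrm{F}}\lesssim c_0(\Mt^2+1)\sqrt d$, which need not be small. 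Without Lipschitz, the nearest-point inequality just returns your crude bound.

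The same conditioning appeal in term (i) is unnecessary. The inequality $\|\Pi_{\sod}(Y)-X\|_{\mathrm{F}}\le 2\|Y-X\|_{\mathrm{F}}$ for $X\in\sod$, used as in \eqref{eqt1}, gives $2\|\Phi_1\|\|\Phi_j-R_j^*\Bar{Q}\|_{\mathrm{F}}+2\|\Phi_1-R_1^*\Bar{Q}\|_{\mathrm{F}}$ directly.

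The paper never compares two projections and does not split off a cross term. It treats the whole off-diagonal noise term \textcircled{5} by writing $\hat R=\bigl[\Pi^n_{\sod}(\Phi\Phi_1^\top)-\Phi\Phi_1^\top\bigr]+(\Phi-\Phi^{(i)}S^{(i)})\Phi_1^\top+\Phi^{(i)}S^{(i)}\Phi_1^\top$. It then:
\begin{itemize}
\item applies concentration only to the linear quantity $((w^t)^\top)_i\Phi^{(i)}$;
\item pulls the actual anchor $\Phi_1$ out through $\|\Phi_1\|\lesssim(\Mt^2+1)\sqrt d$;
\item controls the middle piece by \eqref{ineq:phi-phi_i};
\item bounds the remaining rounding residual (the nonlinear part of your cross term) deterministically in operator norm via Lemma~\ref{le:phi-RQ2}, which is how it avoids your extra $\sqrt d$.
\end{itemize}

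Be aware that the paper's own step \eqref{eqtt2} has two weaknesses. The operator-norm comparison with $R^*\hat Q-\Phi\Phi_1^\top$ is asserted rather than derived from the blockwise nearest-point property. The factor $\sigma_2$ from $\|((w^t)^\top)_i\|_2\le c\sigma_2\sqrt{nd}$ is dropped, so that step effectively assumes $\sigma_2=O(1)$.

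Likewise, your term (ii), like the paper's \textcircled{2} and \textcircled{5}, carries no factor $\Mt$. So the $\Mt$ prefactor in the statement is only justified for $\Mt\gtrsim 1$.

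These caveats are shared with the paper, but they do not rescue the Cauchy--Schwarz step. To reach the stated rate you must do one of two things. Either justify an operator-norm bound on the blockwise rounding residual, or run leave-one-out through the rounding with a genuine Lipschitz estimate for $\Pi_{\sod}$, which requires a stronger noise condition than the one stated.
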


Theorem~\ref{thm: Theorem of Spectral method in Gaussian noise model} then follows by combining the Lemmas~\ref{lem:step 1}, \ref{le: rotation error bound}, and~\ref{le: translation error bound}.

\section{Numerical Experiments}\label{se:experiments}

In this section, we conduct numerical experiments to investigate the practical performance of ASE. Our codes are available at \url{https://github.com/ziyuezhao1/GSP-SE_D}.

\subsection{Comparison with the Two-Stage Approach}
We first empirically study how the estimation error of ASE varies against the noise magnitudes $\sigma_1$ and $\sigma_2$, especially in comparison with the two-stage approach (see Section~1). To this end, we consider $d=3$ and $n=500$. The ground truth is generated as follows. The rotations $R^*_i$ are generated independently according to the uniform distribution on $\mathrm{SO}(3)$. And the translations $t^*_i$ are generated independently according to the 3-dimensional Gaussian distribution with zero-mean and covariance $I_3$. For each value of $\sigma_1$ and $\sigma_2$, the estimation errors for 25 independent realizations are recorded. Figure~\ref{fig:orthogonal group sub-gaussian} shows the box-whisker plots of the maximum block-wise estimation error against the $\sigma_2$ (with $\sigma_1 = 1$) and $\sigma_1$ (with $\sigma_2 = 1$) on the left and right panels, respectively. The red plot corresponds to our ASE, whereas the blue plot corresponds to the two-stage approach.

\begin{figure}[htb]  
     \centering
     \includegraphics[width=1\linewidth]{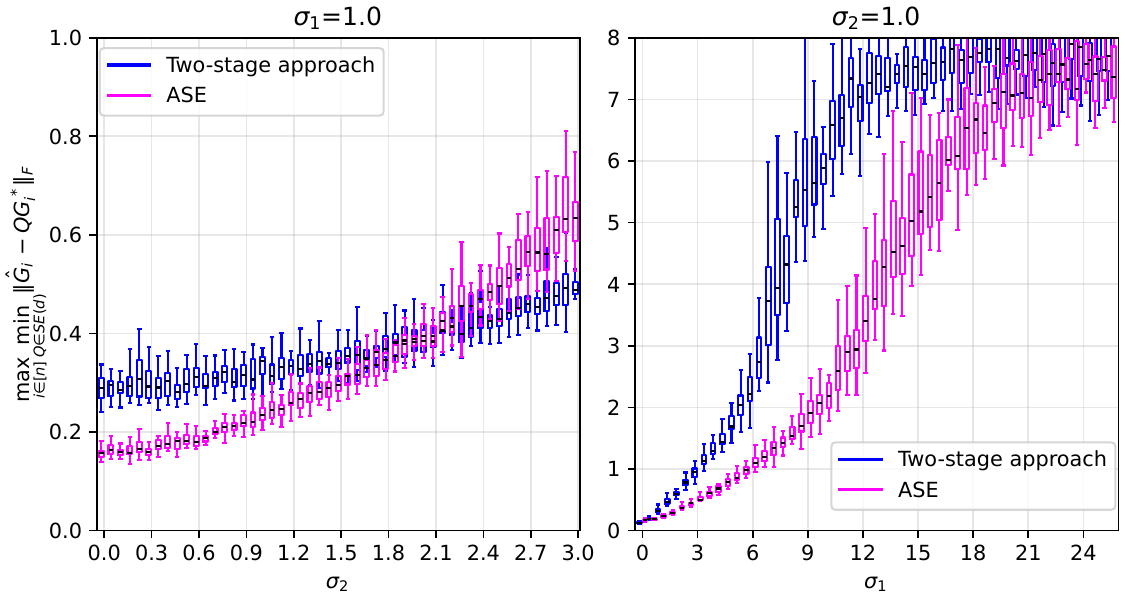}
    \caption{Estimation errors against translation and rotation noise magnitudes.}
     \label{fig:orthogonal group sub-gaussian}
\end{figure}

From the left panel of Figure~\ref{fig:orthogonal group sub-gaussian}, we see that the estimation error of ASE is majorized by that of the two-stage approach when the translation error is not too large. This shows that a holistic approach that jointly estimates the translations and rotations is preferable over the divide-and-conquer two-stage approach when the translation noise is moderate. The advantage is even more apparent from the right panel of Figure~\ref{fig:orthogonal group sub-gaussian} where we vary $\sigma_1$ with a fixed $\sigma_2 = 1$.

\subsection{Multiple Point-Set Registration}\label{sub:mpr experiment}
In the next experiment, we compare the performance of ASE against other spectral methods on the multiple point-set registration problem, which aims to optimally align a collection of 3-dimensional point clouds by transforming them using rigid motions and is naturally an instance of \SEsync. The experimental setup is similar to that in \cite{arrigoni2016spectral}. In particular, we use the Bunny, Dragon (standing), and Happy Buddha (standing) datasets from the Stanford 3D Scanning Repository \cite{stanford3dscanrep}. The noisy comparison observations $C_{ij}$ are obtained as follows. We first perturb each true relative rotation by rotating it by a random angle uniformly distributed in $[0,8^{\circ}]$ about a uniformly random axis, and each true relative translation by a zero-mean Gaussian noise with a standard deviation of 0.8 millimeters. To refine the observations, we then apply the iterative closest point algorithm~\cite{besl1992method} to the observations $C_{ij}$ as a pre-processing step. We compare our ASE with the spectral estimators developed in \cite{arrigoni2016spectral}, \cite{hadi2024se}, and \cite{doherty2022performance}. The performance metrics are the average rotation error ({\it i.e.}, the average angles (in degrees) between the estimated rotations and the corresponding true rotations (up to a common rotation)) and the average translation error ({\it i.e.}, the average distance between the estimated translations and their corresponding true translations). The experiment results are shown in Table~\ref{tab:estimation_error}.
We should point out that the approach by \cite{doherty2022performance} is unsatisfactory if we implement it as described in the paper~\cite{doherty2022performance}. To improve its practical performance, we adopt a trick from \cite{liu2023resync} to suitably flip the signs of the eigenvectors.

\begin{table}[htb]
    \centering
    \setlength{\tabcolsep}{4pt} 
    \begin{tabular}{@{}lcccc@{}}
    \hline
    Average Error & Method & Bunny & Dragon  & Happy Buddha  \\
    \hline
    \multirow{3}{*}{Rotation} 
        & ASE & \textbf{0.76} & \textbf{1.62} & \textbf{1.25} \\
        & \cite{arrigoni2016spectral} & 1.00 & 2.01 & 1.50 \\
        & \cite{hadi2024se} & 0.79 & 1.71 & 1.36 \\
        & \cite{doherty2022performance} & 1.08 & 14.28 & 1.32 \\
    \hline
    \multirow{3}{*}{Translation} 
        & ASE & \textbf{2.59} & \textbf{3.82} & \textbf{1.58} \\
        & \cite{arrigoni2016spectral} & 3.97 & 3.91 & 1.62 \\
        & \cite{hadi2024se} & 8.14 & 5.42 & 2.23 \\
        & \cite{doherty2022performance} & 98.8 & 4.72 & 2.38 \\
    \hline
    \end{tabular}
    \caption{Average rotation error (in degrees) and average translation error (in millimeters) of different spectral approaches for multiple point-set registration.}
    \label{tab:estimation_error}
\end{table}

From Table~\ref{tab:estimation_error}, see that our proposed ASE achieves the smallest estimation error with respect to both rotation and translation. The 3D models constructed using our proposed ASE are shown in Figure~\ref{fig:three_images}.

\begin{figure}[htb]  
    \centering
    \begin{subfigure}[b]{0.12\textwidth}  
        \centering
        \includegraphics[width=\linewidth]{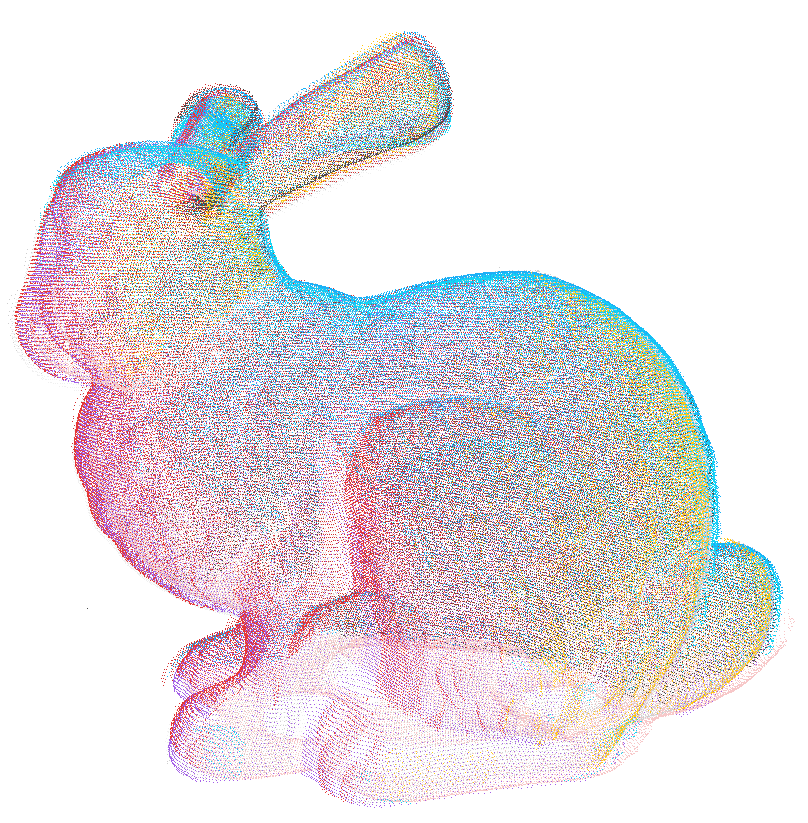} 
        \caption{Bunny}
        \label{fig:sub1}
    \end{subfigure}\hfill
    \begin{subfigure}[b]{0.15\textwidth}
        \centering
        \includegraphics[width=\linewidth]{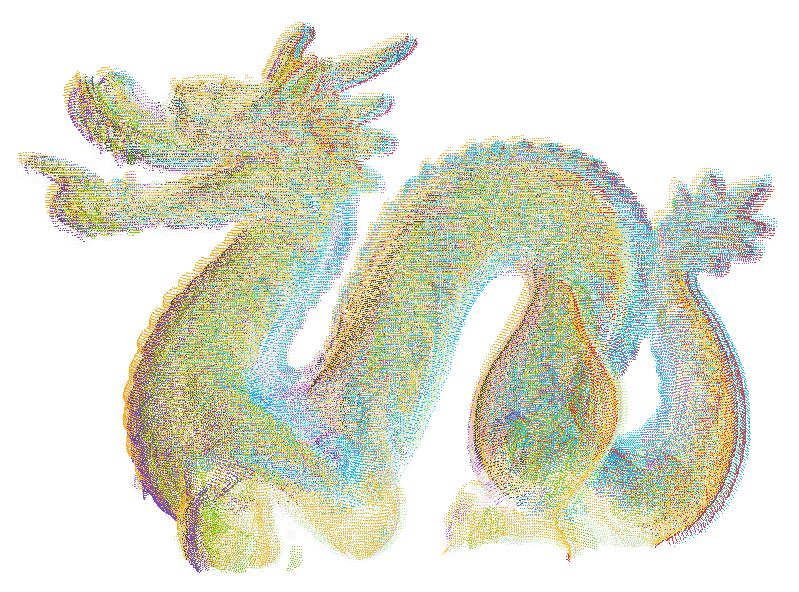}
        \caption{Dragon}
        \label{fig:sub2}
    \end{subfigure}\hfill
    \begin{subfigure}[b]{0.15\textwidth}
        \centering
        \includegraphics[width=0.7\linewidth]{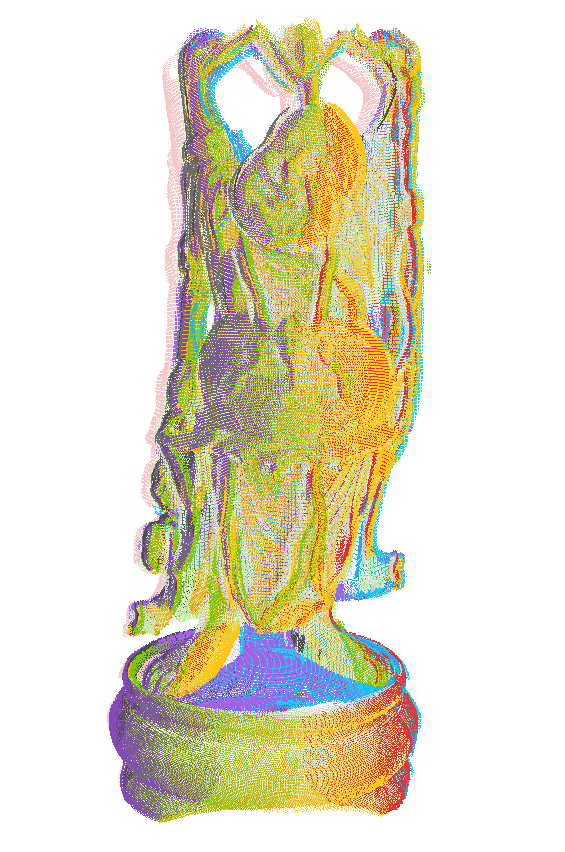}
        \caption{Happy Buddha}
        \label{fig:sub3}
    \end{subfigure}
    \caption{3D models constructed using ASE.}
    \label{fig:three_images}
\end{figure}

\section{Conclusions}
Motivated by the symmetry issue of existing spectral estimators, this paper develops a new spectral estimator, called the anchored spectral estimator (ASE), for rigid motion synchronization. ASE enjoys a strong theoretical guarantee on its estimation error. Numerically, we show that ASE outperforms a two-stage approach using synthetic data and several state-of-the-art spectral estimators through an experiment on the multiple point-set registration problem using real data.
\section{Acknowledgment}
Man-Chung Yue is supported in part by Hong Kong Research Grants Council under the GRF project 17309423.

\bibliographystyle{IEEEbib}
\bibliography{refs}
\section{Appendix}
This appendix provides the proofs of Lemmas~\ref{lem:step 1},~\ref{le: rotation error bound}, and~\ref{le: translation error bound}. 
For any matrix $H\in \mathbb{R}^{na\times b}$, we denote by $H_i \in \mathbb{R}^{a \times b}$ its $i$-th block row. In particular, for $H\in \mathbb{R}^{n\times b}$, $H_i\in\mathbb{R}^{1\times b}$ denotes its $i$-th row. For eigenvectors $\Phi$, we assume $\Phi^\top\Phi=nI_d$. Using the projection $\Pi_{\sod}$, we can map any $nd \times d$ matrix to the feasible region ${\sod}^n$ of the problem \SEsync{} using the block-wise projection $\Pi^n_{\sod} \colon \mathbb{R}^{nd \times d} \to {\sod}^n$ given by $\Pi^n_{\sod} (Y)_i = \Pi_{\sod} ([Y]_i)$, $i = 1,\ldots,n$.


\subsection{Proof of Lemma~\ref{lem:step 1}}\label{sec:inequalities for delta, phi z q}
\begin{proof}[Proof of Lemma~\ref{lem:step 1}] 
First, we define
\[
\Bar{Q}_E = \begin{pmatrix}
    \Pi_{\sod}(\Bar{Q}\Phi_1^\top) & 0 \\
  0 & 1
\end{pmatrix}\in \SEd
\]
Then, for all $i\in [n]$,
\begin{align}\label{eq: 1}
     &\min_{Q \in \SEd}\| \hat{G_i} -Q \gi^* \fnorm  \leqslant \| \hat{G_i} -\Bar{Q}_E^\top\gi^*  \fnorm \nonumber \\
     \le & \| \hat{R}_i -R_i^*  \Pi_{\sod}(\Bar{Q}\Phi_1^\top)\fnorm + \| \hat{t}_i -(\Pi_{\sod}(\Bar{Q}\Phi_1^\top))^\top t^*_i  \|_2 \nonumber \\
     =& \| \Pi_{SO(d)}(\Phi_i\Phi_1^\top)-R_i^*  \Pi_{\sod}(\Bar{Q}\Phi_1^\top)\fnorm \nonumber \\
     &\qquad +\| \hat{t}_i -(\Pi_{\sod}(\Bar{Q}\Phi_1^\top))^\top t^*_i  \|_2.
\end{align}
We then bound the first term on the RHS of \eqref{eq: 1}:
\begin{align}
 &\|  \Pi_{\sod}(\Phi_i\Phi_1^\top)-R_i^*  \Pi_{\sod}(\Bar{Q}\Phi_1^\top) \fnorm \nonumber\\
    \le &  2\| \Phi_i\Phi_1^\top-R_i^*  \Pi_{\sod}(\Bar{Q}\Phi_1^\top) \fnorm \nonumber \\
    \le & 2\| \Phi_i\Phi_1^\top-R_i^*  \Bar{Q}\Phi_1^\top \fnorm+ 2\| R_i^* \Bar{Q}\Phi_1^\top-R_i^*  \Pi_{\sod}(\Bar{Q}\Phi_1^\top) \fnorm \nonumber \\
     \le & 2 \| \Phi_1 \| \| \Phi_i-R_i^* \Bar{Q} \fnorm+ 2\| \Bar{Q}\Phi_1^\top-\Pi_{\sod}(\Bar{Q}\Phi_1^\top) \fnorm \notag \\
     \le & 2 \| \Phi_1 \| \| \Phi_i-R_i^* \Bar{Q} \fnorm + 2 \| \Bar{Q}\Phi_1^\top-R_1^{*\top} \fnorm \notag \\
     \le & 2\max_i \| \Phi_i \| \| \Phi_i-R_i^* \Bar{Q} \fnorm + 2 \| \Bar{Q}\Phi_1^\top-R_1^{*\top} \fnorm \label{eq:orthogonal-invariance},
\end{align}
where the first inequality follows from \cite[Lemma~2]{liu2023unified}, the second from the triangle inequality, and the third from the orthogonality of $R^*_i$.
Note that
\begin{equation*}
    \begin{split}
        & \sqrt{n}= \| \Phi \| = \sup_{\|x\|=1}\sqrt{\|\Phi x\|^2} = \sup_{\|x\|=1}\sqrt{(\Phi x)^\top (\Phi x)} \\
        = & \sup_{\|x\|=1}\sqrt{\sum_{i=1}^n (\Phi_i x)^2}\le \sup_{\|x\|=1}\sqrt{\sum_{i=1}^n \|\Phi_i\|^2 \|x\|^2 } \le \sqrt{n} \max_i \| \Phi_i \|.
    \end{split}
\end{equation*}
We thus have $\max_i \| \Phi_i \| \ge  1$. Combining this fact with inequalities \eqref{eq: 1}, \eqref{eq:orthogonal-invariance}, we obtain the desired result.
\end{proof}

\subsection{Proof of Lemma~\ref{le: rotation error bound}}\label{subsec: proof of lemma 3.2}
\subsubsection{Structure of the Data Matrix}\label{subsubsec:structure of data matrix}
To analyze the rotation error, we decompose the measurement matrix $\Omega$ into ground-truth and noise components. The structure of the resulting matrix is described in Lemma~\ref{le: Hi}, while the supporting estimates for $T^*$, $E$, and $EM$ (for a fixed matrix $M$) are given in Lemma~\ref{le:T}, Lemma~\ref{le:estimation of w^ti and w^tiM}, and Lemma~\ref{le: E_iM}, respectively. The constituent matrices are defined as follows: 

\begin{align*}
&T^*:= \BlkDiag(\sum_{j =1}^ns^*_{1j},\dots,\sum_{j =1}^ns^*_{nj})-s^*,\\
&\Sigma^*:=\BlkDiag(\sum_j^ns_{1j}^*s_{1j}^{*\top},\dots,\sum_j^ns_{nj}^*s_{nj}^{*\top}),\\
    & E:=\BlkDiag(\sum_{j=1}^n w^t_{1j},\dots,\sum_{j=1}^n w^t_{nj})-w^t, \\
    &\Delta :=\BlkDiag(\sum^{n}_{j\neq 1}[ s^*_{1j}w_{1j}^{t\top}+w^t_{1j}s_{1j}^{*\top}+(w^t_{1j}w_{1j}^{t\top}-\sigma_2^2I_d)],\dots,\\
    &\qquad \sum^{n}_{j\neq n}[ s^*_{nj}w_{nj}^{t\top}+w^t_{nj}s_{nj}^{*\top}+(w^t_{nj}w_{nj}^{t\top}-\sigma_2^2I_d)])\\
    & \quad -\frac{1}{2n}(ET^{*\top}+T^*E^{\top}+EE^\top)-2W^R,
\end{align*}
The matrices $T^*, E \in \mathbb{R}^{nd \times n}$ represent the ground-truth and translation noise components, respectively. Under these definitions, the measurement matrix $\Omega$ admits the following decomposition:
\begin{align*}
    \Omega&=2nI_{nd}-2R^*R^{*\top}-2W^R-\frac{1}{2n}(T^*+E)(T^*+E)^\top \\
     &+\Sigma^*+\underbrace{\BlkDiag (\sum^{n}_{j\neq i}w^t_{1j}w_{1j}^{t\top},\dots,\sum^{n}_{j\neq i}w^t_{nj}w_{nj}^{t\top})}_{\text{Quadratic term}}\\
    &+\underbrace{\BlkDiag(\sum^{n}_{\mathclap{j\neq 1}}(s^*_{1j}w_{1j}^{t\top}),\dots,\sum^{n}_{j\neq n}(s^*_{nj}w_{nj}^{t\top}))}_{\text{Cross term}}\\
        &+\underbrace{\BlkDiag(\sum^{n}_{\mathclap{j\neq 1}}(w^t_{1j}s_{1j}^{*\top}),\dots,\sum^{n}_{j\neq n}(w^t_{nj}s_{nj}^{*\top}))}_{\text{Cross term}}.
\end{align*}
The quadratic terms in $w_{ij}^t$ introduce a bias. To center the noise matrix, we define
\[
H := \Omega - \sigma_2^2(n-1)I_{nd} = 2n I_{nd} - 2 R^*R^{*\top} + \Sigma^* - \frac{1}{2n} T^*T^{*\top} + \Delta .
\]
Since $\Omega$ and $H$ share the same eigenvectors $\Phi$, analyzing the estimation error using $H$ is equivalent. For the subsequent analysis, we define the following auxiliary matrices:
\begin{align*}
&\Xi^* := \frac{n}{2}\BlkDiag(t^*_1t^{*\top}_1,\dots,t^*_nt^{*\top}_n)\\
&+\BlkDiag ( \sum_{j=1}^n  t_j^* t_j^{*\top},\dots,\sum_{j=1}^n  t_j^* t_j^{*\top} ),\\
& \Upsilon^*:= \frac{1}{2}\BlkDiag(t^*_1t^{*\top}_1,\dots,t^*_nt^{*\top}_n)(J_n\otimes I_d) \\
 &+\frac{1}{2}(J_n\otimes I_d) \BlkDiag(t^*_1t^{*\top}_1,\dots,t^*_nt^{*\top}_n)\\
    &+\frac{1}{2n}(J_n\otimes I_d) \BlkDiag(t^*_1t^{*\top}_1,\dots,t^*_nt^{*\top}_n)(J_n \otimes I_d) \\
    &-\frac{1}{2}\BlkDiag(t_1^*,\dots,t_n^*)J_n\BlkDiag(t_1^*,\dots,t_n^*)^\top.
\end{align*}
The following lemma elucidates the structure of $H$.
\begin{lemma}\label{le: Hi}
The matrix $H$ admits the decomposition
$H=2n I_{nd}-2 R^*R^{*\top}+\BlkDiag(R_1^*,\dots,R_n^*)(\Xi^*-\Upsilon^*)\BlkDiag(R_1^*,\dots,R_n^*)^\top \allowbreak +\Delta$.
\end{lemma}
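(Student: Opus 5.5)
Since $H = 2nI_{nd} - 2R^*R^{*\top} + \Sigma^* - \frac{1}{2n}T^*T^{*\top} + \Delta$ by definition, the lemma reduces to the purely deterministic identity
\[
\Sigma^* - \frac{1}{2n}T^*T^{*\top} = \BlkDiag(R_1^*,\dots,R_n^*)(\Xi^*-\Upsilon^*)\BlkDiag(R_1^*,\dots,R_n^*)^\top .
\]
The plan is to substitute $s^*_{ij} = R_i^*(t_j^*-t_i^*)$ everywhere and factor out $D := \BlkDiag(R_1^*,\dots,R_n^*)$ on both sides. Every block of $\Sigma^*$ and $T^*$ carries a left factor $R_i^*$ in block row $i$. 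Hence $\Sigma^* = D\,\tilde\Sigma\, D^\top$ and $T^* = D\tilde T$, where $\tilde\Sigma$ and $\tilde T$ are obtained by replacing $s^*_{ij}$ with $t_j^*-t_i^*$. It then remains to show $\tilde\Sigma - \frac{1}{2n}\tilde T\tilde T^\top = \Xi^*-\Upsilon^*$.

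First I compute $\tilde\Sigma$ using the centering assumption $\sum_j t_j^*=0$. Its $i$-th diagonal block is
\[
\sum_j (t_j^*-t_i^*)(t_j^*-t_i^*)^\top = \sum_j t_j^*t_j^{*\top} + n\, t_i^*t_i^{*\top},
\]
because the cross terms vanish. Next I compute $\tilde T$. Its diagonal block $(i,i)$ is $\sum_j (t_j^*-t_i^*) = -n t_i^*$, since the $j=i$ term is zero. Its off-diagonal block $(i,j)$ is $-(t_j^*-t_i^*)$. So the full matrix is
\[
\tilde T = -nP + \mathbf{1}\otimes(\cdot)\text{-type terms},
\]
where $P := \BlkDiag(t_1^*,\dots,t_n^*)\in\mathbb{R}^{nd\times n}$. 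Writing this out cleanly, the $(i,j)$ block of $\tilde T$ equals $t_i^* - t_j^* - n\delta_{ij}t_i^*$. In matrix form,
\[
\tilde T = P J_n - (J_n\otimes I_d)P - nP .
\]
Here the $(i,j)$ block of $PJ_n$ is $t_i^*$, and the $(i,j)$ block of $(J_n\otimes I_d)P$ is $t_j^*$.

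Then I expand $\tilde T\tilde T^\top$ into its nine products, simplifying with $PJ_nP^\top$, $PP^\top = \BlkDiag(t_i^*t_i^{*\top}) =: B$, $J_nJ_n = nJ_n$, and $P\mathbf{1} = (t_i^*)_i$. The identity $(J_n\otimes I_d)P\mathbf{1} = \mathbf{1}\otimes\sum_j t_j^* = 0$ kills several terms. I expect the surviving terms, after multiplying by $-\frac{1}{2n}$, to produce the following:
\begin{itemize}
\item $-\frac{n}{2}B$ from the $n^2 PP^\top$ term;
\item $-\frac12 PJ_nP^\top$ from $PJ_n^2P^\top = nPJ_nP^\top$, where cross terms $-nPJ_nP^\top$ combine with it;
\item $\frac12 B(J_n\otimes I_d) + \frac12 (J_n\otimes I_d)B$ from the cross terms between $nP$ and $(J_n\otimes I_d)P$;
\item $-\frac{1}{2n}(J_n\otimes I_d)B(J_n\otimes I_d)$ from the product of the two $(J_n\otimes I_d)P$ factors.
\end{itemize}
Adding $\tilde\Sigma = nB + \BlkDiag(\sum_j t_j^*t_j^{*\top})$ should give exactly $\Xi^*-\Upsilon^*$, with the $\frac n2 B$ in $\Xi^*$ arising as $nB-\frac n2 B$.

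The main obstacle is the sign and coefficient bookkeeping in this nine-term expansion. In particular, I have to check carefully which cross terms vanish by centering (terms involving $PJ_n(J_n\otimes I_d)P^\top$, whose blocks are $t_i^*(\sum_j t_j^*)^\top=0$), and whether the $PJ_nP^\top$ coefficient nets out to $-\frac12$ and the $(J_n\otimes I_d)B(J_n\otimes I_d)$ term has the sign appearing in $\Upsilon^*$. If a sign mismatches the stated $\Upsilon^*$, I would recheck the sign convention of $\tilde T$, i.e. the sign of $\BlkDiag(\sum_j s_{ij})$ against $s$. I would verify the final identity blockwise as a safeguard, comparing both the $(i,i)$ and the $(i,j)$, $i\neq j$, blocks.
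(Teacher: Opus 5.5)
\textbf{The approach is sound, but the anticipated expansion has three sign errors, so the argument does not close as written.}

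Your strategy is the paper's: reduce to $\Sigma^*-\frac{1}{2n}T^*T^{*\top}=\BlkDiag(R_1^*,\dots,R_n^*)(\Xi^*-\Upsilon^*)\BlkDiag(R_1^*,\dots,R_n^*)^\top$, strip off the rotations, and expand using $\sum_j t_j^*=0$. Your $\tilde\Sigma$ is correct, and so is $\tilde T=PJ_n-(J_n\otimes I_d)P-nP$. In fact the paper's displayed factorization of $T^*$ puts a $+$ in front of the $(J_n\otimes I_d)P$ term, which is a typo; your sign is the right one.

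The gap is in the expansion you anticipate, and for a purely algebraic lemma that expansion is the whole proof. Three of your four coefficients have the wrong sign, and with them the identity is false. Carrying out the nine products:
\begin{itemize}
\item $(PJ_n)(PJ_n)^\top=nPJ_nP^\top$, while the two cross products of $PJ_n$ with $-nP$ give $-2nPJ_nP^\top$. The net coefficient of $PJ_nP^\top$ is therefore $-n$.
\item The two cross products of $-(J_n\otimes I_d)P$ with $-nP$ enter with a plus sign.
\item The $PJ_n$--$(J_n\otimes I_d)P$ products vanish by centering. These are $PJ_nP^\top(J_n\otimes I_d)$ and its transpose; the expression $PJ_n(J_n\otimes I_d)P^\top$ you wrote is not dimensionally defined.
\end{itemize}
Hence
\[
\tilde T\tilde T^\top=n^2B-nPJ_nP^\top+n(J_n\otimes I_d)B+nB(J_n\otimes I_d)+(J_n\otimes I_d)B(J_n\otimes I_d).
\]
So $-\frac{1}{2n}\tilde T\tilde T^\top$ contributes $+\frac12PJ_nP^\top$ and $-\frac12(J_n\otimes I_d)B-\frac12B(J_n\otimes I_d)$. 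These are exactly the signs these terms carry in $-\Upsilon^*$. Your list gives them the signs they carry in $+\Upsilon^*$. With your coefficients, the total differs from $\Xi^*-\Upsilon^*$ by $-PJ_nP^\top+(J_n\otimes I_d)B+B(J_n\otimes I_d)$. Its $(i,i)$ block is $t_i^*t_i^{*\top}$, which is nonzero whenever $t_i^*\neq 0$.

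Your fallback would also send you the wrong way. $\tilde T$ is not the culprit. Flipping the sign of its $(J_n\otimes I_d)P$ term, as in the paper's typo, would just give the $(J_n\otimes I_d)B$ cross terms the wrong sign.

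The blockwise check you propose settles it. The $(i,k)$ block of $\tilde T\tilde T^\top$ is
\[
-n\,t_i^*t_k^{*\top}+\sum_j t_j^*t_j^{*\top}+n\,t_i^*t_i^{*\top}+n\,t_k^*t_k^{*\top}+n^2\delta_{ik}t_i^*t_i^{*\top}.
\]
Scaling this by $-\frac{1}{2n}$ and adding the $(i,k)$ block $\delta_{ik}\bigl(\sum_j t_j^*t_j^{*\top}+n\,t_i^*t_i^{*\top}\bigr)$ of $\tilde\Sigma$ reproduces the $(i,k)$ block of $\Xi^*-\Upsilon^*$ exactly.
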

\begin{proof}
 Let $BR = \BlkDiag(R_1^*,\dots,R_n^*)$. The ground-truth matrix $T^*$ can be written as
\begin{align*}
T^*:=&-BR \BlkDiag(t_1^*,\dots,t_n^*)L\\
&+BR(J_n\otimes I_d) \BlkDiag(t_1^*,\dots,t_n^*).
\end{align*}  
 Moreover,
\begin{align*}
\Sigma^* =BR[&
\BlkDiag( \sum_{j=1}^n t_j^* t_j^{*\top},\dots,\sum_{j=1}^n t_j^* t_j^{*\top})\\
&+n\BlkDiag(t_1^*t_1^{*\top},\dots,t_n^*t_n^{*\top})](BR)^\top.
\end{align*}
Consequently,
\begin{align*}
 &\Sigma^*-\frac{1}{2n}T^*T^{*\top}\\
&=BR\{
\underbrace{\BlkDiag( \sum_{j=1}^n t_j^* t_j^{*\top},\dots,\sum_{j=1}^n t_j^* t_j^{*\top})}_{\Xi^*}\\
&+\underbrace{\frac{n}{2}\BlkDiag(t_1^*t_1^{*\top},\dots,t_n^*t_n^{*\top})}_{\Xi^*}\\
&\underbrace{+\frac{1}{2}\BlkDiag(t_1^*,\dots,t_n^*)J_n\BlkDiag(t_1^*,\dots,t_n^*)^\top}_{\Upsilon^*}\\
&\underbrace{-\frac{1}{2}(J_n \otimes I_d) \BlkDiag(t_1^*t_1^{*\top},\dots,t_n^*t_n^{*\top})}_{\Upsilon^*}\\
&\underbrace{- \frac{1}{2}\BlkDiag(t_1^*t_1^{*\top},\dots,t_n^*t_n^{*\top})(J_n \otimes I_d)}_{\Upsilon^*}\\
&\underbrace{-\frac{1}{2n}(J_n \otimes I_d) \BlkDiag(t_1^*t_1^{*\top},\dots,t_n^*t_n^{*\top})(J_n \otimes I_d)}_{\Upsilon^*}\}(BR)^\top\\
&=BR(\Xi^*-\Upsilon^*)(BR)^\top.
\end{align*}
Noting that $(J_n\otimes I_d)\BlkDiag(t^*_1,\dots,t^*_n)J_n = 0$ and using the definitions of $\Xi^*$, $\Upsilon^*$ and $\Delta$, we obtain the stated decomposition of~$H$.
\end{proof}
The next lemma bounds the ground-truth translation matrix $T^*$.
\begin{lemma}\label{le:T}
 Let $T^* = T^*_D - s^*$, where $T^*_D:=\BlkDiag(\sum_js_{1j}^*,\dots,\allowbreak\sum_js_{nj}^*)$. Then, 
\begin{align*}
    &\| T^*_D \| \leqslant \Mt n ,\quad \| s^* \| \leqslant 2\Mt n,\\ &\| T^* \| \leqslant 3\Mt n ,\quad \| s^*_i \| \leqslant 2\Mt\sqrt{n}.
\end{align*}
\end{lemma}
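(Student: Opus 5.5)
We express each block structurally and then bound the operator norms directly. Recall $s^*_{ij}=R_i^*(t_j^*-t_i^*)$ and the normalization $\sum_j t_j^*=0$. For the diagonal part, the $i$-th block of $T^*_D$ is
\[
\sum_j s^*_{ij}=R_i^*\Big(\sum_j t_j^* - n t_i^*\Big)=-nR_i^*t_i^*.
\]
Its Euclidean norm is $n\|t_i^*\|_2\le n\Mt$. A block-diagonal matrix with $d\times 1$ blocks has operator norm equal to the largest block norm, so $\|T^*_D\|\le \Mt n$.

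For $s^*$, we use the factorization already exploited in the proof of Lemma~\ref{le: Hi}. Write $BR=\BlkDiag(R_1^*,\dots,R_n^*)$ and $D_t=\BlkDiag(t_1^*,\dots,t_n^*)\in\mathbb{R}^{nd\times n}$. Then the $ij$-th block of $s^*$ is $R_i^*t_j^*-R_i^*t_i^*$, which gives
\[
s^* = BR\big( (J_n\otimes I_d) D_t - D_t J_n\big).
\]
We have $\|BR\|=1$ and $\|J_n\|=\|J_n\otimes I_d\|=n$, while $\|D_t\|=\max_i\|t_i^*\|_2=\Mt$. Hence $\|s^*\|\le n\Mt+\Mt n=2\Mt n$. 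One can equally work directly: the first piece $[R_i^*t_j^*]_{ij}$ is $BR(\mathbf 1_n\otimes I_d)\,[t_1^*,\dots,t_n^*]$, with norm at most $\sqrt n\cdot\|[t_1^*,\dots,t_n^*]\|_{\mathrm F}\le \sqrt n\cdot\sqrt n\Mt$. The second piece is $BR\,D_t\,J_n$, with norm at most $\Mt n$. The triangle inequality then gives $\|T^*\|\le\|T^*_D\|+\|s^*\|\le 3\Mt n$.

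For the block row $s^*_i\in\mathbb{R}^{d\times n}$, its $j$-th column is $R_i^*(t_j^*-t_i^*)$. So $s^*_i=R_i^*\big([t_1^*,\dots,t_n^*]-t_i^*\mathbf 1_n^\top\big)$, and
\[
\|s_i^*\|\le \|[t_1^*,\dots,t_n^*]\|+\|t_i^*\|_2\sqrt n\le \sqrt n\Mt+\sqrt n\Mt=2\Mt\sqrt n,
\]
where we bound the operator norm of $[t_1^*,\dots,t_n^*]$ by its Frobenius norm.

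The argument is elementary throughout. The only point needing care is that the operator norm of the block-diagonal matrix with vector blocks is the maximum block norm. This holds because $D_tx=(x_1t_1^*,\dots,x_nt_n^*)$, so $\|D_tx\|_2^2=\sum_i x_i^2\|t_i^*\|_2^2\le\Mt^2\|x\|_2^2$. The same reasoning applies to $T^*_D$.
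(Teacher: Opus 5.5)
Your proof is correct and follows essentially the same route as the paper: you use $\sum_j t_j^*=0$ to see that $T^*_D$ is block diagonal with blocks $-nR_i^*t_i^*$, you factor $s^*=\BlkDiag(R_1^*,\dots,R_n^*)((J_n\otimes I_d)D_t-D_tJ_n)$ and apply the triangle inequality, and you bound the block row $s_i^*$ through the Frobenius norm of $[t_1^*,\dots,t_n^*]$. The only difference is that you keep the orthogonal factor $\BlkDiag(R_1^*,\dots,R_n^*)$ and the sign explicit, whereas the paper silently drops them (harmlessly, since neither changes the operator norms), and you avoid the paper's typo $(t_1^*,\dots,t_1^*)$ in the last bound, which should read $(t_i^*,\dots,t_i^*)$.
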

\begin{proof}
First, $\| T^*_D \| =\| n\BlkDiag(t_1^*,\dots,t_n^*) \| \leqslant n\Mt$. For $s^*$, we have
\begin{align*}
&\| s^* \| \\
&=\| (J_n \otimes I_d)\BlkDiag(t_1^*,\dots,t_n^*)-\BlkDiag(t_1^*,\dots,t_n^*)J_n \| \\
&\leqslant \| (J_n \otimes I_d)\BlkDiag(t_1^*,\dots,t_n^*) \| + \| \BlkDiag(t_1^*,\dots,t_n^*)J_n \|\\
&\leqslant 2n\Mt.
\end{align*}
Hence, $\| T^* \| \leqslant \| T^*_D \| + \| s^* \| \leqslant 3n\Mt$. At the same time, $\| s^*_i \| \leqslant \| (t_1^*,\dots,t_n^*) \| + \| (t_1^*,\dots ,t_1^*)\|\leqslant 2\sqrt{n}\Mt$.
\end{proof}
The next lemma provides high-probability bounds for the noise matrix $E$, which is used frequently in Section~\ref{subsubsec:Rotation error bound}.
\begin{lemma}\label{le:estimation of w^ti and w^tiM}
Under the Gaussian noise model, let $E = E_D - w^t$, where $E_D:=\BlkDiag(\sum_jw^t_{1j},\dots,\sum_jw^t_{nj})$. Then, there exists an absolute constant $c > 0$ such that
     \begin{align*}
         &\| E_{D} \| \leqslant c\sigma_2(\sqrt{nd}+\sqrt{n\log n}), \quad \| w^t \| \leqslant c\sigma_2\sqrt{nd}, \\
         &\| (w^t)_i \| \leqslant c\sigma_2\sqrt{n}, \quad \| E \| \leqslant c\sigma_2(\sqrt{nd}+\sqrt{n\log n}).
     \end{align*}
with probability at least $1 - O(n^{-2})$.
\end{lemma}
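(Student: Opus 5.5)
We bound each of the four quantities separately with standard Gaussian concentration, then take a union bound.

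\emph{The matrix $w^t$.} View $w^t\in\mathbb{R}^{nd\times n}$ as a Gaussian random matrix. Its $ij$-th block is $w^t_{ij}$ for $i\neq j$ and $0$ on the diagonal. The entries are independent $N(0,\sigma_2^2)$ apart from the deterministic zero pattern; no symmetry is imposed, since $w^t_{ij}$ and $w^t_{ji}$ are independent. Zeroing out entries can be handled in two ways:
\begin{itemize}
\item apply a Gaussian comparison argument (for example, Bandeira--van Handel type bounds for matrices with independent, non-identically distributed entries), which gives $\mathbb{E}\|w^t\|\lesssim \sigma_2(\sqrt{nd}+\sqrt{n})$;
\item or write $w^t=\tilde w-\BlkDiag(\tilde w_{11},\dots,\tilde w_{nn})$, where $\tilde w$ is a full i.i.d.\ Gaussian $nd\times n$ matrix, and note that the block-diagonal correction has norm $\max_i\|\tilde w_{ii}\|_2\lesssim\sigma_2(\sqrt d+\sqrt{\log n})$.
\end{itemize}
For the full matrix, Gordon's bound gives $\mathbb{E}\|\tilde w\|\le\sigma_2(\sqrt{nd}+\sqrt n)$. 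The operator norm is $\sigma_2$-Lipschitz in the entries, so Gaussian concentration yields $\|w^t\|\le c\sigma_2(\sqrt{nd}+\sqrt{\log n})$ with probability $1-n^{-2}$. Since $\sqrt{\log n}\lesssim\sqrt{nd}$, this is the claimed $c\sigma_2\sqrt{nd}$.

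\emph{The block rows $(w^t)_i$.} The $i$-th block row $(w^t)_i\in\mathbb{R}^{d\times n}$ has at most $d(n-1)$ i.i.d.\ Gaussian entries. Gordon's bound plus concentration gives $\|(w^t)_i\|\le\sigma_2(\sqrt n+\sqrt d+t)$ with probability $1-e^{-t^2/2}$. Take $t=\sqrt{6\log n}$ and union bound over $i$. Since $d\le n$, we get $\|(w^t)_i\|\le c\sigma_2\sqrt n$ for all $i$ simultaneously. (If $d>n$ is possible, the statement tacitly assumes $d\lesssim n$; I would note this.)

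\emph{The matrix $E_D$.} $E_D$ is block diagonal with blocks $v_i=\sum_{j\neq i}w^t_{ij}\sim N(0,(n-1)\sigma_2^2 I_d)$, so $\|E_D\|=\max_i\|v_i\|_2$. Gaussian norm concentration gives $\|v_i\|_2\le\sigma_2\sqrt{n-1}(\sqrt d+t)$ with probability $1-e^{-t^2/2}$. Choosing $t=\sqrt{6\log n}$ and union bounding over $n$ indices gives $\|E_D\|\le c\sigma_2(\sqrt{nd}+\sqrt{n\log n})$ with probability $1-n^{-2}$.

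\emph{The matrix $E$.} By the triangle inequality, $\|E\|\le\|E_D\|+\|w^t\|$, and the bound follows from the two previous steps.

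A final union bound over the four events keeps the failure probability $O(n^{-2})$.

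The only mildly delicate step is the bound on $\|w^t\|$. This is because of the zero diagonal blocks and the rectangular shape. I would use the decomposition $w^t=\tilde w-\BlkDiag(\tilde w_{ii})$, which reduces it to standard i.i.d.\ estimates.
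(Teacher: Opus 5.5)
Your treatment of $E_D$, $(w^t)_i$ and $E$ is the same as the paper's. You use Gaussian norm concentration for each block $\sum_{j\neq i}w^t_{ij}\sim N(0,(n-1)\sigma_2^2 I_d)$ with $t\asymp\sqrt{\log n}$ and a union bound, a Davidson--Szarek/Gordon bound for the $d\times n$ block row, and $\|E\|\le\|E_D\|+\|w^t\|$. You are slightly more careful than the paper in two places: you union-bound over $i$ for $(w^t)_i$, and you flag the tacit assumption $d\lesssim n$.

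The step that does not go through as written is the bound on $\|w^t\|$. You assert that $w^t_{ij}$ and $w^t_{ji}$ are independent, so that $w^t$ has independent entries apart from a zero pattern. The observation model in the main text is ambiguous on this point. However, the paper's proof of this lemma explicitly takes $w^t_{ji}=-w^t_{ij}$, so only the blocks with $i<j$ are independent. Under that convention the blocks in positions $(i,j)$ and $(j,i)$ are perfectly correlated, and Gordon's bound does not apply to $w^t$. Your padding $w^t=\tilde w-\BlkDiag(\tilde w_{11},\dots,\tilde w_{nn})$ does not rescue it, because $\tilde w$ is then not an i.i.d.\ matrix.

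The paper's remedy is to split $w^t=(w^t)^++(w^t)^-$ into its block-upper and block-lower triangular parts. Each part has independent mean-zero Gaussian entries; deterministic zeros are allowed in the norm bound for independent sub-Gaussian entries. Hence each part satisfies $\|(w^t)^\pm\|\le c\sigma_2(\sqrt{nd}+\sqrt n+\sqrt{\log n})$ with probability $1-O(n^{-2})$, and the triangle inequality costs only a factor $2$. The split is valid whether the pairs $(w^t_{ij},w^t_{ji})$ are independent or antisymmetric, and it makes your padding device unnecessary. With it, the rest of your proof stands unchanged.
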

\begin{proof}
We have $(E_DE_D^\top)_{ii} = (\sum_{j\neq i} w^t_{ij})(\sum_{j\neq i} w^t_{ij})^\top = Z_i Z_i^\top$, where $Z_i := \sum_{j\neq i} w_{ij}^t$ is a Gaussian random vector with variance $(n-1)\sigma_2^2 I_d$. For fixed $i$, by \cite[Theorem~3.1.1]{vershynin2018high}, we have:
\[
\left| \frac{\|Z_i\|_2}{\sigma_2 \sqrt{n-1}} - \sqrt{d} \right| \leqslant c_1 t.
\]
with probability at least $1 - 2e^{-c_2 t^2}$ for some constant $c_2>0$. This implies
\[
\|Z_i\|_2 \leqslant \sigma_2 \sqrt{n} \left( \sqrt{d} + c_1 t \right).
\]
with probability at least $1 - 2e^{-c_2 t^2}$. Squaring both sides yields
\[
\|Z_i\|_2^2 \leqslant n\sigma_2^2 \left( \sqrt{d} + c_1 t \right)^2.
\]
Choosing $t = c_3\sqrt{\log n}$ for some $c_3>0$, we obtain
\[
\|Z_i\|_2^2 \leqslant n\sigma_2^2 \left( \sqrt{d} + c_4\sqrt{\log n} \right)^2.
\]
with probability at least $1 - O(n^{-3})$. Applying the union bound, we have $\| E_D \|\leqslant c_4\sigma_2(\sqrt{nd}+\sqrt{n\log n})$ with probability at least $1 - O(n^{-2})$. 

Here $w^t \in \mathbb{R}^{nd \times n}$ is a block matrix whose $(i,j)$-th block is vector $w_{ij}^t \in \mathbb{R}^d$ for $i,j\in[n]$ where $w_{ii}^t = 0$ and $w_{ji}^t = -w_{ij}^t$ for $i \neq j$. For $i \le j$, the random vectors $w_{ij}^t$ is independent Gaussian with mean zero and covariance $\sigma_2^2 I_d$. Decompose $w^t = (w^t)^+ + (w^t)^-$, where $(w^t)^+$ contains $w_{ij}^t$ for $i \le j$ and zero otherwise, and $(w^t)^-$ contains $w_{ij}^t$ for $i \ge j$ and zero otherwise. \cite[Theorem 4.4.3]{vershynin2018high} applies for each part $(w^t)^{+}$ and $(w^t)^{-}$ separately. By a union bound, we get $\| w^t \|\leqslant c_5\sqrt{nd}$ with probability at least $1 - O(n^{-2})$. And $(w^t)_i$ is the $d\times n$  independent Gaussian matrix. Using \cite[Corollary 5.35]{vershynin2010introduction},  $\| (w^t)_i \| \leqslant c_6\sigma_2\sqrt{n}$ with probability at least $1 - O(n^{-2})$. Finally, since $E = E_D - w^t$, setting $c = c_4+ c_5+c_6$ and applying a union bound yields the desired result.
\end{proof}


\begin{lemma}\label{le: E_iM}
Let $M_{1i} \in \mathbb{R}^{1 \times d}$ and $M_{2i} \in \mathbb{R}^{d \times d}$ for $i=1,\dots,n$. Define the concatenated matrices $M^\top_1= [ M^\top_{11}, M_{12}^\top, \dots ,M^\top_{1n} ] \in \mathbb{R}^{d \times n}$ and $M_2^\top = [ M^\top_{21}, M_{22}^\top, \dots ,M^\top_{2n} ]\in \mathbb{R}^{d \times nd}$. Assume that $M_1$ is independent of $E_i$, and $M_2$ is independent of $(E^\top)_i$. Then, there exists an absolute constant $c > 0$ such that
\begin{align}
   &\|E_i M_1\fnorm \leqslant c \sigma_2(\sqrt{d} + \sqrt{\log n})( \|M_1\fnorm+ \sqrt{n}\| M_{1i} \fnorm), \label{ineq:1}\\
   & \| (E^\top)_i M_2\|_2 \leqslant c\sigma_2(\sqrt{d} + \sqrt{\log n})( \|M_2\fnorm+ \sqrt{n}\| M_{2i} \fnorm) \label{ineq:2} , 
\end{align}
with probability at least $1 - O(n^{-3})$.
\end{lemma}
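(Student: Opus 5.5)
We prove \eqref{ineq:1} and \eqref{ineq:2} by splitting $E$ as $E_D - w^t$ and handling the diagonal part and the off-diagonal part separately. In each part we condition on $M_1$ (resp.\ $M_2$), which is independent of the relevant noise, so that the product is Gaussian with a computable covariance. A union bound over $i$ is not needed, since we only claim probability $1-O(n^{-3})$ for a fixed $i$. It is worth recording that $E_i \in \mathbb{R}^{d\times n}$ has columns $(E_i)_{\cdot j} = -w^t_{ij}$ for $j\neq i$ and $(E_i)_{\cdot i} = Z_i := \sum_{j\neq i} w^t_{ij}$. Hence
\[
E_i M_1 = Z_i M_{1i} - \sum_{j\neq i} w^t_{ij} M_{1j}.
\]

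\textbf{First term.} It has rank one, so $\|Z_i M_{1i}\fnorm = \|Z_i\|_2\|M_{1i}\|_2$. By the argument already used in the proof of Lemma~\ref{le:estimation of w^ti and w^tiM} (Gaussian norm concentration, \cite[Theorem~3.1.1]{vershynin2018high}, with $t \asymp \sqrt{\log n}$), we have $\|Z_i\|_2 \le c\sigma_2\sqrt{n}(\sqrt d+\sqrt{\log n})$ with probability $1-O(n^{-3})$. This produces the $\sqrt n\,\|M_{1i}\fnorm$ contribution.

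\textbf{Second term.} Conditionally on $M_1$, the matrix $X := \sum_{j\neq i} w^t_{ij}M_{1j} \in \mathbb{R}^{d\times d}$ satisfies $\vect(X) = \sum_j (M_{1j}^\top\otimes I_d)\, w^t_{ij}$. This is a centered Gaussian vector with covariance $\sigma_2^2 (A\otimes I_d)$, where $A = \sum_{j\neq i} M_{1j}^\top M_{1j}$. Therefore $\|X\fnorm$ is a Lipschitz function of a standard Gaussian with Lipschitz constant $\sigma_2\|A\|^{1/2} \le \sigma_2\|M_1\fnorm$ and mean at most $\sigma_2\sqrt{d\,\mathrm{tr}A} \le \sigma_2\sqrt d\,\|M_1\fnorm$. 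Gaussian concentration then gives $\|X\fnorm \le c\sigma_2(\sqrt d+\sqrt{\log n})\|M_1\fnorm$ with probability $1-O(n^{-3})$. Since this conditional bound is uniform in $M_1$, it also holds unconditionally, by the assumed independence. Adding the two terms gives \eqref{ineq:1}.

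\textbf{Second inequality.} For \eqref{ineq:2}, note that $(E^\top)_i \in \mathbb{R}^{1\times nd}$ is the $i$-th row of $E^\top$, i.e.\ the transpose of the $i$-th column of $E$. Its blocks are $-w^{t\top}_{ji} = w^{t\top}_{ij}$ in position $j\neq i$ (using antisymmetry $w^t_{ji}=-w^t_{ij}$), and $Z_i^\top$ in position $i$. Hence
\[
(E^\top)_i M_2 = Z_i^\top M_{2i} + \sum_{j\neq i} w^{t\top}_{ij} M_{2j},
\]
which has exactly the same structure as before. The first term is bounded by $\|Z_i\|_2\|M_{2i}\fnorm$. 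The second is a Gaussian row vector with covariance $\sigma_2^2\sum_j M_{2j}^\top M_{2j}$, whose trace is at most $\|M_2\fnorm^2$, and the same Lipschitz concentration applies.

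\textbf{Main obstacle.} The delicate step is the dependence structure: the entries $w^t_{ij}$ are shared between rows $i$ and $j$ through antisymmetry. We must therefore use that $M_1$ is independent of the whole block row $E_i$ (which involves exactly $\{w^t_{ij}\}_{j}$) in order to legitimately condition. Beyond that, the only care needed is to track constants so that the probability is $1-O(n^{-3})$ for each fixed $i$.
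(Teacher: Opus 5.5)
Your proposal is correct and follows essentially the same route as the paper. Both split $E_i=(E_D)_i-(w^t)_i$, bound the rank-one diagonal piece by $\|Z_i\|_2\|M_{1i}\|_2$ using Gaussian norm concentration, and control the off-diagonal piece by conditioning on the independent $M_1$. The only difference is in that off-diagonal step: the paper takes an SVD $M_1=U\Sigma V^\top$ and cites a spectral-norm bound for the $d\times d$ Gaussian matrix $(w^t)_iU$, whereas you apply Lipschitz Gaussian concentration directly to $\vect$ of the product. Your version also writes out the second inequality, which the paper omits.
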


\begin{proof}
Recall that $(E_{D})_i \in \mathbb{R}^{d\times n}$ is the $i$-the block row of $E_D\in \mathbb{R}^{nd\times n}$, we get $\|E_i M_1\fnorm \leqslant \| (E_{D})_i M_1\fnorm+\| (w^t)_iM_1 \fnorm$. To bound $\| w^tM_1 \fnorm$, we consider the SVD $M_1 = U \Sigma V^\top$, where $U \in \mathbb{R}^{n \times d}$ with $U^\top U = I_d$, $\Sigma \in \mathbb{R}^{d \times d}$, and $V \in \mathbb{R}^{d \times d}$. Note that $(w^t)_i$ is a $d \times n$ Gaussian random matrix. Using $\|M_1\fnorm = \|\Sigma\fnorm$, we have:
\[
\|(w^t)_i M_1\fnorm = \|(w^t)_i U \Sigma V^\top\fnorm \leqslant  \|(w^t)_i U\| \|M_1\fnorm.
\]
Since $U^\top U = I_d$, $(w^t)_i U$ is an asymmetric $d \times d$ Gaussian random matrix. \cite[Theorem~5.4]{ling2022near} guarantees that with probability at least $1 - O(n^{-3})$,
\[
\|(w^t)_i  M_1\fnorm \leqslant \|(w^t)_i U\| \|M_1\fnorm\leqslant c\sigma_2\|M_1\fnorm(\sqrt{d} + \sqrt{\log n}).
\]
Moreover,
\[ \| (E_{D})_iM_1\fnorm \leqslant \| \sum_{j}w^t_{ij} M_{1i}\fnorm \leqslant c\sigma_2 (\sqrt{nd}+\sqrt{n\log n} ) \| M_{1i} \|_2,
\]
with probability at least $1 - O(n^{-3})$, since $w^t_{ij}$ is Gaussian random vector. Combining these bounds yields inequality \eqref{ineq:1} with probability at least $1 - O(n^{-3})$. The proof for inequality \eqref{ineq:2} follows a similar argument and is therefore omitted.
\end{proof}

\subsubsection{Rotation error bound}\label{subsubsec:Rotation error bound}
\begin{proof}[Proof of Lemma~\ref{le: rotation error bound}]
We bound $\max_i \| \Phi_i - R_i^*\bar{Q} \|_{\mathrm{F}}$ similarly to \cite{liu2023resync}:
\begin{align*}
    &2n\| \Phi_i-R_i^{*} \Bar{Q} \fnorm \leqslant (2n+\lambda_{\min}(R_i^*\Xi^*_{ii}R^{*\top}_i))\| \Phi_i-R_i^{*} \Bar{Q} \fnorm \\
    &\leqslant  \|  (2nI_d+R_i^*\Xi^*_{ii}R^{*\top}_i)\Phi_i-(2nI_d+R^{*}_i\Xi^*_{ii}R^{*\top}_i)R_i^{*} \Bar{Q}\fnorm\\
    &= \| 2n\Phi_i+ R_i^*\Xi^*_{ii}R^{*\top}_i\Phi_i+\Phi_i\Lambda-(H\Phi)_{i}- 2nR_i^{*} \Bar{Q}\\
    &\qquad -R^{*}_i\Xi^*_{ii} \Bar{Q}\fnorm\\
     &\leqslant  \| 2 R^{*\top}+\Upsilon^*_i \BlkDiag(R_1^*,\dots,R_n^*)^\top\| \|\Phi-R^{*}\Bar{Q}\fnorm \\
     &\quad +\| \Delta_{i}\Phi\fnorm+ \| \Lambda\| \| \Phi_i\fnorm\\
     & \leqslant 8(\Mt^2+1)\sqrt{d} \| \Delta \| +\frac{4}{7n} \| \Delta_i\Phi \fnorm \| \Delta \| +\| \Delta_{i}\Phi\fnorm,
    \end{align*}
where $\Lambda$ is the $d \times d$ diagonal matrix whose diagonal entries are the $d$ smallest eigenvalues of $H$, $\Xi^*_{ii}$ denotes the $i$-th $d \times d$ block of the block-diagonal matrix $\Xi^*$, the equality is obtained by $\Phi_i\Lambda = (H\Phi)_i$, the third inequality follows from Lemma~\ref{le: Hi}. To prove the last inequality, we note that by Lemmas~\ref{le: the estimation of delta of GSP for SED}-\ref{le: leave-one-out} in proved Section~\ref{subsubsec: analysis of noise matrix}, there exist constants $c_0, c_1, c_2, c_3 >0$ such that if $(c_1\sigma_1+c_2\sigma_2 \Mt+c_3\sigma_2^2)\leqslant \frac{c_0\sqrt{n}}{\sqrt{d}+\sqrt{\log(n)}}$, then with probability at least $1-O(n^{-2})$ the following inequalities hold simultaneously:
\begin{align}
    &\| \Lambda\| \leqslant \| \Delta \| , \label{eq1} \\
    &\| 2R^{*\top}+ \Upsilon^*_i \BlkDiag(R_1^*,\dots,R_n^*)^\top\| \leqslant 2\sqrt{n}(\Mt^2+1), \label{eq3} \\
    &\|\Phi_i\|_{\mathrm{F}} \leqslant \frac{8(\Mt^2+1)\sqrt{d}}{7} +\frac{4}{7 n} \| \Delta_i\Phi \|_{\mathrm{F}}, \label{eq2} \\
    &\| \Phi  - R^*\Bar{Q} \|_{\mathrm{F}} \leqslant \frac{c_0\sqrt{d}\| \Delta \|}{\sqrt{n}}, \label{eq4} \\
    &\| \Delta \| \leqslant (c_1\sigma_1+c_2\sigma_2 \Mt+c_3\sigma_2^2) (\sqrt{nd}+\sqrt{n\log n}), \label{eq5}\\
    &\| \Delta_i\Phi \fnorm \nonumber\\
     &\leqslant  (\Mt^2+1)(c_1\sigma_1+c_2\sigma_2 \Mt+c_3\sigma_2^2)  (d\sqrt{n}+\sqrt{nd \log n}), \label{eq6}\\
    &\max_i \| \Phi_i \| \leqslant \max_i \| \Phi_i \fnorm \leqslant c_4(\Mt^2+1)\sqrt{d} \label{eq7},
\end{align}
Finally, applying a union bound over $i=1,\dots,n$ for $\max_i \| \Phi_i - R_i^*\bar{Q} \|_{\mathrm{F}}$, we have:
\begin{align}
    &\max_{i\in [n]}\| \Phi_i -R^*_i \Bar{Q} \|_{\mathrm{F}} \notag \\
    &\le c_4 (\Mt^2+1) (c_1\sigma_1 + c_2 \Mt \sigma_2 + c_3\sigma_2^2) \frac{(d+\sqrt{d\log n})}{\sqrt{n}}\label{eq8}
\end{align}
Combining inequality \eqref{eq8} and inequality~\eqref{eq7} yields the desired result.
\end{proof}
\subsubsection{Analysis of Noise Matrix for $\SEd$}\label{subsubsec: analysis of noise matrix}
The following six lemmas prove inequalities~\eqref{eq1}-\eqref{eq7} used in the proof of Lemma~\ref{le: rotation error bound} in Section~\ref{subsubsec:Rotation error bound}.
\begin{lemma}[Proof of inequality \eqref{eq1}]\label{le: the estimation of delta of GSP for SED}
    If $\| \Delta \| \leqslant n/4$, then the eigenvalues of the data matrix $H$ are controlled by the noise matrix, i.e., 
    $\| \Lambda \| \leqslant \| \Delta \|$ for all $1 \leqslant i \leqslant n$, 
    where $\Lambda$ is the $d \times d$ diagonal matrix whose diagonal entries are the $d$ smallest eigenvalues of $H$.
\end{lemma}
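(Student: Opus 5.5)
We plan to apply Weyl's inequality to the splitting of Lemma~\ref{le: Hi}, $H = H^* + \Delta$, where
\[
H^* := 2nI_{nd} - 2R^*R^{*\top} + B(\Xi^*-\Upsilon^*)B^\top, \qquad B := \BlkDiag(R_1^*,\dots,R_n^*).
\]
The argument then reduces to two facts about the noiseless matrix $H^*$.

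\emph{Step 1: $H^*$ is positive semidefinite and has a $d$-dimensional null space.} Since $H^* = \Sigma^* - \frac{1}{2n}T^*T^{*\top} + 2nI_{nd} - 2R^*R^{*\top}$, it equals the data matrix $\Omega$ built from noiseless observations. Hence $\tr(R^\top H^* R)$ is, up to the constant shift in \eqref{opt02}, the objective of \eqref{opt01} with the translations optimized out. For an arbitrary $Y\in\mathbb{R}^{nd\times d}$, the same partial minimization over $t$ expresses $Y^\top H^* Y$ as the minimum over $t$ of a sum of squares
\[
\sum_{i,j}\Big(\|Y_iY^\top_j\text{-type terms}\|^2 + \|t_j-t_i-Y_i^\top s^*_{ij}\|^2\Big)
\]
in the relaxed variable. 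This gives $H^*\succeq 0$. Plugging in $Y=R^*$ gives zero: with $t=t^*$, every residual vanishes because $R_i^{*\top}s^*_{ij}=t_j^*-t_i^*$. Therefore $H^*R^*=0$, i.e., the columns of $R^*$ span the eigenvalue-$0$ eigenspace.

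\emph{Step 2: spectral gap of $H^*$.} We need $\lambda_{d+1}(H^*)\ge cn$ with, say, $c\ge 1/2$. The term $2nI_{nd}-2R^*R^{*\top}$ has eigenvalue $0$ on $\mathrm{span}(R^*)$ and $2n$ on its orthogonal complement. It then suffices to show that $B(\Xi^*-\Upsilon^*)B^\top$ is positive semidefinite on $\mathrm{span}(R^*)^\perp$, or at least does not reduce the bottom of the spectrum there below $n/2$. I would get this from the variational form: for $x\perp R^*$, the term $\sum_i\|x_i\|^2$ type quadratic $2n\|x\|^2$ dominates, and the translation part contributes a nonnegative quantity, namely the minimized sum of squared translation residuals. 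Concretely, writing $x^\top H^* x = 2n\|x\|^2 - 2\|R^{*\top}x\|^2 + (\text{min over } t\text{ of translation residuals})\ge 2n\|x\|^2$ for $x\perp R^*$ gives $\lambda_{d+1}(H^*)\ge 2n$. This identification, i.e., checking that $\Sigma^*-\frac{1}{2n}T^*T^{*\top}$ is positive semidefinite via completing the square in $t$ (the Schur complement of $\begin{pmatrix}\Sigma^* & T^*/\cdot\\ \cdot & 2L\otimes I_d\end{pmatrix}$ with $L^\dagger$ acting as $1/n$ on $\mathbf 1^\perp$), is the main obstacle. The point needing care is the range condition, namely that $T^{*\top}$ annihilates the all-ones direction.

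\emph{Step 3: Weyl's inequality.} Weyl gives $|\lambda_k(H)-\lambda_k(H^*)|\le\|\Delta\|$ for every $k$. For $k\le d$ this yields $|\lambda_k(H)|\le\|\Delta\|$, hence $\|\Lambda\|\le\|\Delta\|$. The hypothesis $\|\Delta\|\le n/4$ together with the gap from Step 2 ensures $\lambda_{d+1}(H)\ge 2n-n/4>\|\Delta\|$. The $d$ smallest eigenvalues of $H$ are therefore exactly those perturbed from $0$, so $\Lambda$ is well defined and separated from the rest of the spectrum. This separation is what is needed later for the Davis--Kahan bound \eqref{eq4}.
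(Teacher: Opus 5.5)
Your proposal is correct and uses the same argument as the paper: Weyl's inequality applied to $H = H^* + \Delta$, together with the fact that the $d$ smallest eigenvalues of the noiseless matrix $H^*$ are zero. The paper only asserts that fact, and later asserts the spectral gap $\lambda_{d+1}\ge 2n-\|\Delta\|$ used in the leave-one-out lemma, whereas your partial-minimization/Schur-complement argument actually proves both. One correction to Step~1: for a general $Y$ the rotation part is not a sum of $\|Y_iY_j^\top-\cdot\|^2$ terms. Its positive semidefiniteness comes directly from $R^{*\top}R^*=nI_d$, i.e.\ $R^*R^{*\top}\preceq nI_{nd}$, which is what you correctly use in Step~2.
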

\begin{proof}
By Weyl's theorem \cite{stewart1998perturbation},
\begin{align*}
&\max_i \lvert \lambda_{i}(\Lambda)-\lambda_{i}(2 n I_{nd} -2 R^{*}R^{*\top}+\Sigma^*-\frac{1}{2}T^*T^{*\top}) \lvert \leqslant \| \Delta \|. \end{align*}
Since the smallest $d$ eigenvalues of $2n I_{nd} - 2 R^{*}R^{*\top} + \Sigma^* - \tfrac{1}{2}T^*T^{*\top}$ are zero, we obtain
$\max_i \lvert \lambda_{i}(\Lambda)-0 \lvert \leqslant \| \Delta \| $.
\end{proof}
\begin{lemma}[Proof of inequality \eqref{eq3}]\label{le: lemma for Z}
      If $ \max_i\| t^*_i \|_2 \leqslant \Mt$, $\|  2R^{*\top}+ \Upsilon^*_i \BlkDiag (R_1^*,\dots,R_n^*)^\top\| \leqslant 2\sqrt{n}(\Mt^2+1)$.
\end{lemma}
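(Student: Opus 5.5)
The bound will come from the triangle inequality applied to the $d\times nd$ block row $\Upsilon^*_i\,\BlkDiag(R_1^*,\dots,R_n^*)^\top$, splitting it into the four pieces of $\Upsilon^*$. The $2R^{*\top}$ term is easy. Since $R^{*\top}$ is the horizontal concatenation of $n$ orthogonal $d\times d$ blocks, $R^{*\top}R^* = nI_d$, so $\|2R^{*\top}\| = 2\sqrt{n}$. The goal is then to show that the rotated $i$-th block row of $\Upsilon^*$ has operator norm at most $2\sqrt{n}\Mt^2$. Because $\BlkDiag(R_1^*,\dots,R_n^*)$ is orthogonal, right-multiplying by its transpose does not change the operator norm, so it suffices to bound $\|\Upsilon^*_i\|$.

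Write $D=\BlkDiag(t_1^*t_1^{*\top},\dots,t_n^*t_n^{*\top})$ and $B=\BlkDiag(t_1^*,\dots,t_n^*)$. The $i$-th block row of each piece can be computed explicitly.
\begin{itemize}
\item $\frac12 D(J_n\otimes I_d)$ has $i$-th block row $\frac12 t_i^*t_i^{*\top}[I_d,\dots,I_d]$. Its norm is at most $\frac12\Mt^2\sqrt{n}$.
\item $\frac12 (J_n\otimes I_d)D$ has $i$-th block row $\frac12[t_1^*t_1^{*\top},\dots,t_n^*t_n^{*\top}]$. Its norm is at most $\frac12\big\|\sum_j t_j^*t_j^{*\top}t_j^*t_j^{*\top}\big\|^{1/2}\le \frac12\sqrt{n}\Mt^2$.
\item $\frac1{2n}(J_n\otimes I_d)D(J_n\otimes I_d)$ has $i$-th block row $\frac1{2n}\big(\sum_j t_j^*t_j^{*\top}\big)[I_d,\dots,I_d]$. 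Its norm is at most $\frac1{2n}\cdot n\Mt^2\cdot\sqrt{n}=\frac12\sqrt{n}\Mt^2$.
\item $\frac12 BJ_nB^\top$ has $ij$-th block $\frac12 t_i^*t_j^{*\top}$, so its $i$-th block row is $\frac12 t_i^*[t_1^{*\top},\dots,t_n^{*\top}]$. This is a rank-one matrix with norm $\frac12\|t_i^*\|_2\big(\sum_j\|t_j^*\|_2^2\big)^{1/2}\le\frac12\sqrt{n}\Mt^2$.
\end{itemize}
Summing the four pieces gives $\|\Upsilon^*_i\|\le 2\sqrt{n}\Mt^2$. Adding the rotation term then yields $2\sqrt{n}(\Mt^2+1)$, as claimed.

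All four bounds reduce to the same two facts: a horizontal concatenation $[A_1,\dots,A_n]$ has norm $\big\|\sum_j A_jA_j^\top\big\|^{1/2}$, and $\|t_j^*\|_2\le\Mt$ for every $j$. The main care point is getting the block-row indexing right for the Kronecker products with $J_n\otimes I_d$. In particular, $(J_n\otimes I_d)M$ has every block row equal to the sum of the block rows of $M$, and $M(J_n\otimes I_d)$ has every block column equal to the sum of the block columns of $M$.

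A secondary check is that the constants match exactly. Each piece contributes $\frac12\sqrt{n}\Mt^2$, and four of them total $2\sqrt{n}\Mt^2$. So no slack is needed, and the assumption $\sum_i t_i^*=0$ is not even required here.
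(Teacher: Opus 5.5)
Your proof is correct and follows essentially the same route as the paper. Both use the triangle inequality with $\|2R^{*\top}\|=2\sqrt{n}$, use orthogonal invariance to drop $\BlkDiag(R_1^*,\dots,R_n^*)^\top$, and bound each of the four block-row pieces of $\Upsilon^*_i$ by $\frac{1}{2}\sqrt{n}\Mt^2$; your explicit block-row computations simply spell out what the paper's one-line estimate leaves implicit.
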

\begin{proof}
From the definition of $\Upsilon^*_i$,
\begin{align*}
    \| \Upsilon^*_i \| &\leqslant \frac{1}{2}\max_i\| t_i^*t_i^{*\top}\| \sqrt{n}+\frac{1}{2}\|  (t^*_it^{*\top}_1 \dots t^*_it^{*\top}_n)\| \\
    &+\frac{1}{2}\| (t_1^*t_1^{*\top} \dots t_n^*t_n^{*\top})\ \|+\frac{1}{2}\sqrt{n}\max_i \| t_i^*t_i^{*\top}\|\\  
  & \leqslant  2\Mt^2\sqrt{n}.
\end{align*}
 Therefore, $\| 2 R^{*\top}+\Upsilon^*_i  \BlkDiag (R_1^*,\dots,R_n^*)^\top\| \leqslant 2 \|   R^* \|+ \| \Upsilon^*_i \| \leqslant 2\sqrt{n}(\Mt^2+1)$.
\end{proof}
\begin{lemma}[Proof of inequality \eqref{eq2}]\label{le:phi for sed}
    If $\| \Delta \| \leqslant  \frac{n}{4}$ and $ \max_i\| t^*_i \|_2 \leqslant \Mt$, then $\|\Phi_i\fnorm \leqslant \frac{8(\Mt^2+1)\sqrt{d}}{7} +\frac{4}{7 n} \| \Delta_i\Phi \fnorm $.
\end{lemma}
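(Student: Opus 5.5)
The approach is to read off the $i$-th block row of the eigen-equation $H\Phi = \Phi\Lambda$ and solve it for $\Phi_i$. By Lemma~\ref{le: Hi}, write $BR=\BlkDiag(R_1^*,\dots,R_n^*)$. The $i$-th block row of $H\Phi$ has four contributions:
\[
2n\Phi_i-2R_i^*R^{*\top}\Phi+R_i^*\Xi^*_{ii}R_i^{*\top}\Phi_i-R_i^*\Upsilon^*_i(BR)^\top\Phi+\Delta_i\Phi .
\]
These are the identity term, the rank-$d$ rotation term, the block-diagonal part of $\Xi^*$ (since $\Xi^*$ is block diagonal), the $\Upsilon^*$ part, and the noise. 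The eigen-equation then gives
\[
(2nI_d+R_i^*\Xi^*_{ii}R_i^{*\top})\Phi_i-\Phi_i\Lambda = 2R_i^*R^{*\top}\Phi+R_i^*\Upsilon^*_i(BR)^\top\Phi-\Delta_i\Phi .
\]

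For the left-hand side, I will lower bound its Frobenius norm. Since $\Xi^*_{ii}\succeq 0$, the matrix $A:=2nI_d+R_i^*\Xi^*_{ii}R_i^{*\top}$ has smallest eigenvalue at least $2n$. The hypothesis $\|\Delta\|\le n/4$ together with Lemma~\ref{le: the estimation of delta of GSP for SED} gives $\|\Lambda\|\le\|\Delta\|\le n/4$. Hence
\[
\|A\Phi_i-\Phi_i\Lambda\|_{\mathrm F}\ge (2n-n/4)\|\Phi_i\|_{\mathrm F}=\tfrac{7n}{4}\|\Phi_i\|_{\mathrm F}.
\]
This is where the constant $7/4$ comes from, and it yields the factors $4/(7n)$ and $8/7$ in the statement.

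For the right-hand side, I will use the triangle inequality. The combined term satisfies
\[
\|(2R_i^*R^{*\top}+R_i^*\Upsilon^*_i(BR)^\top)\Phi\|_{\mathrm F}\le \|2R^{*\top}+\Upsilon^*_i(BR)^\top\|\,\|\Phi\|_{\mathrm F},
\]
using orthogonality of $R_i^*$. Lemma~\ref{le: lemma for Z} bounds the operator norm by $2\sqrt n(\Mt^2+1)$, valid under $\max_i\|t_i^*\|_2\le\Mt$. The normalization $\Phi^\top\Phi=nI_d$ gives $\|\Phi\|_{\mathrm F}=\sqrt{nd}$. The product is therefore $2n(\Mt^2+1)\sqrt d$. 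The noise term contributes $\|\Delta_i\Phi\|_{\mathrm F}$. Dividing by $7n/4$ gives
\[
\|\Phi_i\|_{\mathrm F}\le \frac{8(\Mt^2+1)\sqrt d}{7}+\frac{4}{7n}\|\Delta_i\Phi\|_{\mathrm F}.
\]

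The main obstacle is the lower bound on the left-hand side, because $A$ acts on the left of $\Phi_i$ while $\Lambda$ acts on the right. I would handle this by diagonalizing both sides. Write $A=U D U^\top$, with $\Lambda$ already diagonal, and set $Y=U^\top\Phi_i$. Then $U^\top(A\Phi_i-\Phi_i\Lambda)$ has entries $(D_{kk}-\Lambda_{ll})Y_{kl}$. Since $D_{kk}\ge 2n$ and $|\Lambda_{ll}|\le n/4$, each factor satisfies $D_{kk}-\Lambda_{ll}\ge 7n/4$, which gives the Sylvester-type bound. A secondary point is to check that the block-row expansion of $H\Phi$ matches Lemma~\ref{le: Hi} exactly. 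This includes the sign of the $\Upsilon^*$ term and the fact that only $\Xi^*_{ii}$ survives in row $i$. Signs do not matter after taking norms.
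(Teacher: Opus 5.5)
Your proposal is correct and is essentially the paper's proof. Both read off the $i$-th block row of $H\Phi=\Phi\Lambda$ via Lemma~\ref{le: Hi}, bound the right-hand side using Lemma~\ref{le: lemma for Z} together with $\|\Phi\|_{\mathrm F}=\sqrt{nd}$ (which the paper uses implicitly), and lower bound the left-hand side by $\tfrac{7n}{4}\|\Phi_i\|_{\mathrm F}$. The only difference is that the paper gets this lower bound from the triangle inequality, $\|A\Phi_i-\Phi_i\Lambda\|_{\mathrm F}\ge \lambda_{\min}(A)\|\Phi_i\|_{\mathrm F}-\|\Lambda\|\,\|\Phi_i\|_{\mathrm F}\ge (2n-\tfrac{n}{4})\|\Phi_i\|_{\mathrm F}$. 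The Sylvester-type diagonalization you flag as the main obstacle is therefore valid but not needed.
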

\begin{proof}
For $\| \Phi_i\fnorm$, we have
\begin{align*}
    &(2 n+\lambda_{\min}(R_i^*\Xi^*_{ii}R_i^{*\top}))\| \Phi_i\fnorm \\
    & \leqslant \| 2n\Phi_i+R_i^*\Xi^*_{ii}R_i^{*\top} \Phi_i-(H\Phi)_i+\Phi_i\Lambda\fnorm \\
    &\leqslant \| 2n\Phi_i+ R_i^*\Xi^*_{ii}R_i^{*\top} \Phi_i-(H\Phi)_i\fnorm+\| \Phi_i\fnorm \| \Lambda\|.
\end{align*}
Using the inequality $ \| \Lambda \| \leqslant \frac{n}{4}$ and $\lambda_{\min}(R^*_i\Xi^*_{ii}R^{*\top}_i)\allowbreak \geqslant 0$ ($\Xi^*_{ii}= \frac{n}{2}t^*_it^{*\top}_i+ \sum_{j=1}^n  t_j^* t_j^{*\top}$ are positive semi-definite), we obtain
\begin{align*}
   &\frac{7n}{4} \|\Phi_i\fnorm \\
   &\leqslant   \| 2n\Phi_i+R_i^*\Xi^*_{ii}R_i^{*\top} \Phi_i-(H\Phi)_i\fnorm \\&\leqslant  \|
(2  R^{*\top}+\Upsilon^*_i \BlkDiag (R_1^*,\dots,R_n^*)^\top)\Phi \fnorm+ \| \Delta_{ i}\Phi \fnorm.
\end{align*}
By Lemma~\ref{le: lemma for Z}, we have $\|\Phi_i\fnorm \leqslant \frac{8(\Mt^2+1)\sqrt{d}}{7} +\frac{4}{7n} \| \Delta_{i}\Phi \fnorm$.
\end{proof}

\begin{lemma}[Proof of inequality \eqref{eq4}]\label{le:phi-RQ}
$\| \Phi -R^* \Bar{Q}\fnorm \leqslant \frac{c_0\sqrt{d}\| \Delta \|}{\sqrt{n}}$.
\end{lemma}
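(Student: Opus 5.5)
This is a Davis–Kahan type bound combined with the standard fact relating the distance to the orbit $\{R^*Z : Z\in\od\}$ to the sine of the principal angles. I would write $H = H^* + \Delta$ with
\[
H^* = 2nI_{nd} - 2R^*R^{*\top} + \BlkDiag(R_1^*,\dots,R_n^*)(\Xi^*-\Upsilon^*)\BlkDiag(R_1^*,\dots,R_n^*)^\top,
\]
as in Lemma~\ref{le: Hi}. Then $H^*\succeq 0$, since it equals $2nI_{nd}-2R^*R^{*\top}+\Sigma^*-\frac{1}{2n}T^*T^{*\top}$, which is the quadratic form of the noiseless least-squares objective after the translations have been minimized out. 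Moreover $H^*R^* = 0$: the columns of $R^*$ attain zero objective, so they lie in the null space.

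The main step is to lower-bound the spectral gap of $H^*$, namely its $(d+1)$-th smallest eigenvalue, by a constant multiple of $n$. On the orthogonal complement of $\mathrm{range}(R^*)$, the term $2nI_{nd}-2R^*R^{*\top}$ acts as $2n\,I$. The remaining part $\Sigma^*-\frac{1}{2n}T^*T^{*\top}$ is positive semidefinite as a Schur-complement-type quantity: it equals the minimum over $t$ of a nonnegative quadratic form. Hence
\[
\lambda_{d+1}(H^*) \ge \lambda_{d+1}\bigl(2nI_{nd}-2R^*R^{*\top}\bigr) = 2n,
\]
because $R^{*\top}R^* = nI_d$ and so $R^*R^{*\top}$ has eigenvalue $n$ with multiplicity $d$ and $0$ otherwise. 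I expect this to be the main obstacle, namely checking carefully that $\Sigma^*-\frac{1}{2n}T^*T^{*\top}\succeq 0$. To verify it, I would write $\tr(R^\top(\Sigma^*-\frac{1}{2n}T^*T^{*\top})R)$ as $\min_t \sum_{i,j}\|t_j-t_i-R_i^\top s^*_{ij}\|^2/2$, using the elimination of $t$ in \eqref{opt02}.

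Next I would apply the Davis–Kahan $\sin\Theta$ theorem to the orthonormal bases $\Phi/\sqrt n$ and $R^*/\sqrt n$. With gap $2n$ and perturbation $\|\Delta\|\le n/4$, which holds under the theorem's hypothesis via \eqref{eq5}, this gives
\[
\bigl\|\sin\Theta\bigr\|\le \frac{\|\Delta\|}{2n-\|\Delta\|}\le \frac{4\|\Delta\|}{7n}.
\]

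Finally I would convert from $\sin\Theta$ to the aligned distance. Since $\Bar{Q}$ is the Procrustes minimizer over $\od$, the standard inequality $\min_{Z\in\od}\|U-VZ\fnorm\le\sqrt2\,\|\sin\Theta(U,V)\fnorm$ holds for orthonormal $U,V$. Together with $\|\sin\Theta\fnorm\le\sqrt d\,\|\sin\Theta\|$ and rescaling by $\sqrt n$, this yields
\[
\|\Phi-R^*\Bar{Q}\fnorm \le \sqrt{2n}\,\sqrt d\,\frac{4\|\Delta\|}{7n} = \frac{c_0\sqrt d\,\|\Delta\|}{\sqrt n}.
\]
The claimed bound therefore holds with an absolute constant, for instance $c_0 = 4\sqrt2/7$ or any larger constant.
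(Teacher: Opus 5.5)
Your proposal is correct and takes the same route as the paper: write $H=H^*+\Delta$ with $H^*R^*=0$ and apply a Davis--Kahan bound to the bottom-$d$ eigenspace. The paper cites a Davis--Kahan variant from \cite{liu2023unified} that gives the aligned distance directly, where you combine the $\sin\Theta$ theorem with the Procrustes inequality; your partial-minimization argument that $\Sigma^*-\frac{1}{2n}T^*T^{*\top}\succeq 0$, hence $\lambda_{d+1}(H^*)\ge 2n$, usefully verifies a spectral gap the paper only asserts (the identity actually holds without your factor $1/2$, which is immaterial). Since the lemma is stated without hypotheses, you can avoid the high-probability bound \eqref{eq5} by handling the case $\|\Delta\|>n/4$ with the trivial estimate $\|\Phi-R^*\Bar{Q}\|_{\mathrm{F}}\le 2\sqrt{nd}<8\sqrt{d}\,\|\Delta\|/\sqrt{n}$, which makes the bound deterministic with $c_0=8$.
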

\begin{proof}
The smallest $d$ eigenvalues of $2nI_{nd}-2R^{*}R^{*\top}+\Sigma^*-\frac{1}{2n}T^*T^{*\top}$ are $0$. Each column of $\frac{1}{\sqrt{n}}R^*$ is an eigenvector. Moreover, $\| R^* \|_{\mathrm{F}} = \sqrt{nd}$. Furthermore, each column of $\frac{1}{\sqrt{n}}\Phi$ is a normalized eigenvector of $H$. Using the variant of the Davis-Kahan theorem \cite[Theorem 5]{liu2023unified}, we obtain
$\frac{1}{\sqrt{n}}\| \Phi -R^* \Bar{Q}\fnorm \leqslant \frac{c_0\sqrt{d}\| \Delta \|}{n}
$.
\end{proof}
Lemmas~\ref{le: norm of delta_sed in gaussian noise} and \ref{le: leave-one-out} and Proposition~\ref{prop: norm of Delta_sed M} rely on a decomposition of the noise matrix $\Delta$ into three parts: $2W^R$, $\Delta_T$, and $\Delta_{\Sigma}$, where
\begin{align*}
&\Delta=2W^R+\Delta_{\Sigma}-\Delta_{T},\\
    &\Delta_T:=\frac{1}{2n}(ET^{*\top}+T^*E^{\top}+EE^{\top}),\\
&\Delta_{\Sigma}:=\BlkDiag(\sum^{n}_{j\neq 1}[ s^*_{1j}w_{1j}^{t\top}+w^t_{1j}s_{1j}^{*\top}+(w^t_{1j}w_{1j}^{t\top}-\sigma_2^2I_d)],\dots,\\
    &\qquad \sum^{n}_{j\neq n}[ s^*_{nj}w_{nj}^{t\top}+w^t_{nj}s_{nj}^{*\top}+(w^t_{nj}w_{nj}^{t\top}-\sigma_2^2I_d)])
\end{align*}
Here $2W^R$ is the rotation noise in $H$, while $\Delta_T$ and $\Delta_{\Sigma}$ constitute the translation noise.
\begin{lemma}[Proof of inequality \eqref{eq1}]\label{le: norm of delta_sed in gaussian noise}
     There exist absolute positive constants $c_1$, $c_2$, $c_3$, $c_4$ such that, if $\sigma_2\leqslant \frac{\Mt\sqrt{n}}{c_4(\sqrt{d}+\sqrt{\log n})}$, then,
    $$\| \Delta \| \leqslant  (c_1\sigma_1+c_2\Mt \sigma_2+c_3\sigma^2_2)  (\sqrt{nd}+\sqrt{n \log n}). $$
     with probability at least $1 - O(n^{-2})$.
\end{lemma}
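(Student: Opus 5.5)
We bound the three pieces of $\Delta = 2W^R + \Delta_\Sigma - \Delta_T$ separately by the triangle inequality, then combine them with a union bound.

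\emph{Rotation noise.} $W^R$ is a symmetric block matrix (with $W^R_{ji} = W^{R\top}_{ij}$) whose entries are Gaussian with variance $\sigma_1^2$. A standard spectral norm bound for Gaussian Wigner-type matrices of size $nd$ gives $\|W^R\| \le c\sigma_1\sqrt{nd}$ with probability $1-O(n^{-2})$. For example, split $W^R$ into its upper and lower triangular parts as in the proof of Lemma~\ref{le:estimation of w^ti and w^tiM}, or invoke \cite[Theorem 4.4.5]{vershynin2018high}. This yields the $c_1\sigma_1$ term.

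\emph{The $\Delta_T$ term.} Here we use Lemmas~\ref{le:T} and~\ref{le:estimation of w^ti and w^tiM} directly:
\[
\|\Delta_T\| \le \frac{1}{2n}\bigl(2\|E\|\|T^*\| + \|E\|^2\bigr) \le \frac{1}{2n}\Bigl(6c\Mt n\sigma_2(\sqrt{nd}+\sqrt{n\log n}) + c^2\sigma_2^2 n(\sqrt d+\sqrt{\log n})^2\Bigr).
\]
The first part is $O(\Mt\sigma_2(\sqrt{nd}+\sqrt{n\log n}))$. The second part is $O(\sigma_2^2(\sqrt d+\sqrt{\log n})^2)$, and since $(\sqrt d+\sqrt{\log n})\le 2\sqrt n$ it is at most $O(\sigma_2^2(\sqrt{nd}+\sqrt{n\log n}))$. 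The hypothesis $\sigma_2\le \Mt\sqrt n/(c_4(\sqrt d+\sqrt{\log n}))$ could be used instead to absorb it into the $\Mt\sigma_2$ term.

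\emph{The block-diagonal term.} Since $\Delta_\Sigma$ is block diagonal, $\|\Delta_\Sigma\| = \max_i \|(\Delta_\Sigma)_{ii}\|$. For fixed $i$, the cross term is $\sum_{j\ne i} s^*_{ij}w^{t\top}_{ij} = s^*_i (w^t)_i^\top$, where $s^*_i\in\mathbb R^{d\times n}$ is deterministic. Hence
\[
\|s^*_i (w^t)_i^\top\| \le \|s^*_i\|\,\|(w^t)_i\| \le 2\Mt\sqrt n\cdot c\sigma_2\sqrt n.
\]
This crude bound is too large by a factor of about $\sqrt n$, so we must exploit independence. I would instead view $s^*_i(w^t)_i^\top = \sum_j s^*_{ij} w^{t\top}_{ij}$ as a sum of independent matrices. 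Since $\|s^*_{ij}\|\le 2\Mt$, either matrix Bernstein (after truncating Gaussians) or conditioning on the Gaussian rows gives, for each fixed unit vector $u$, that $(w^t)_i s^{*\top}_i u$ is Gaussian with covariance $\sigma_2^2\|s^{*\top}_i u\|^2 I_d$. Because $\|s^{*\top}_i u\|\le \|s^*_i\|\le 2\Mt\sqrt n$, a net argument over the sphere in $\mathbb R^d$ yields $\|s^*_i(w^t)_i^\top\|\le c\Mt\sigma_2\sqrt n(\sqrt d+\sqrt{\log n})$ with probability $1-O(n^{-3})$. The quadratic term $\sum_{j\ne i}(w^t_{ij}w^{t\top}_{ij}-\sigma_2^2 I_d) = (w^t)_i(w^t)_i^\top - (n-1)\sigma_2^2 I_d$ is a centered sample covariance. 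By the covariance concentration bound \cite[Theorem 4.6.1]{vershynin2018high} with deviation $t\sim\sqrt{\log n}$, it is at most $c\sigma_2^2(\sqrt{nd}+\sqrt{n\log n}+d+\log n)$, which is $O(\sigma_2^2(\sqrt{nd}+\sqrt{n\log n}))$ since $d,\log n\lesssim n$. A union bound over $i\in[n]$ then gives the claim for $\Delta_\Sigma$ with probability $1-O(n^{-2})$.

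Summing the three bounds and taking a union bound over all events establishes the lemma. The main obstacle is the cross term in $\Delta_\Sigma$. The naive operator-norm product loses a factor of $\sqrt n$, so the real work is the Gaussian-conditioning/net argument that delivers the $\sqrt{n}(\sqrt d+\sqrt{\log n})$ rate uniformly over $i$.
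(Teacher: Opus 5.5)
Your proposal is correct and follows essentially the same route as the paper. Both arguments use the triangle inequality over $2W^R$, $\Delta_T$ and $\Delta_\Sigma$, bound $\Delta_T$ with Lemmas~\ref{le:T} and~\ref{le:estimation of w^ti and w^tiM}, handle the cross term $(w^t)_i s_i^{*\top}$ with a Gaussian bound, treat the quadratic term with sample-covariance concentration, and finish with a union bound; where the paper cites Lemma~\ref{le: E_iM} for the cross term (whose proof uses the same rotation-invariance idea as your net argument), and where it uses the hypothesis on $\sigma_2$ for $\frac{1}{2n}\|EE^\top\|$, your direct derivation and your use of $\sqrt d+\sqrt{\log n}\le 2\sqrt n$ (i.e.\ $d\le n$, which the paper also assumes) are equally valid.
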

\begin{proof}
    Note that $\| \Delta \| \leqslant \| 2W^R \| + \| \Delta_T \| + \| \Delta_{\Sigma} \|$. Since $2W^R$ is a standard Gaussian matrix, $\| 2W^R \| \leqslant c_1\sigma_1 \sqrt{nd}$ with probability at least $1-O(n^{-2})$. For $\Delta_T$, we have
    \begin{align}
  \| \Delta_T \| &=
          \|\frac{1}{2n}ET^{*\top}+\frac{1}{2n}T^*E^{\top}+\frac{1}{2n}EE^{\top}\| \nonumber \\ &\leqslant  \frac{1}{2n}\| ET^{*\top} \| + \frac{1}{2n}\| T^*E^{\top} \| + \frac{1}{2n}\| EE^{\top} \| \label{ineq 1}.
    \end{align}
    By Lemma~\ref{le:T} and Lemma~\ref{le:estimation of w^ti and w^tiM},
   \begin{align}
     &\frac{1}{2n}\| ET^{*\top} \| = \frac{1}{2n}\| T^* E^{\top} \| \nonumber \\
     &\leqslant \frac{1}{2n} \| T^* \| \| E^{\top} \|  \leqslant \frac{1}{4}c_2 \Mt \sigma_2 (\sqrt{nd}+\sqrt{n\log n}),\label{ineq 2}
   \end{align}  
   with probability at least $1-O(n^{-2})$. For the higher-order term, using the condition $\sigma_2 \leqslant \frac{\Mt\sqrt{n}}{c_4(\sqrt{d}+\sqrt{\log n})}$,
   \begin{align}
      &\frac{1}{2n}\| EE^{\top} \|  \leqslant  \frac{1}{2n}\| E\| \| E^{\top} \| \leqslant  c_2^2\sigma_2^2 (\sqrt{d}+\sqrt{\log n})^2 \nonumber\\
      &\leqslant \frac{c_2}{4} \Mt\sigma_2^2 (\sqrt{nd}+\sqrt{n\log n}),\label{ineq 3}
 \end{align}
with probability at least $1-O(n^{-2})$. For $\Delta_{\Sigma}$,
  \begin{align*}
      &\| \Delta_{\Sigma} \| \\
      &\leqslant 2\max_i\| \sum^{n}_{j\neq i}w_{ij}^ts_{ij}^{*\top}\|  + \max_i\| \sum_{j\neq i}^nw_{ij}^tw_{ij}^{t\top}-\sigma_2^2(n-1)I_{d}  \| .
  \end{align*}
 For $\| \sum^{n}_{j\neq i}w_{ij}^ts_{ij}^{*\top}\|$ where $i$ is fixed, we 
apply Lemma~\ref{le: E_iM} to get $\| \sum^{n}_{j\neq i}w_{ij}^ts_{ij}^{*\top}\|\leqslant \frac{c_2}{4}\Mt\sigma_2 \allowbreak (\sqrt{nd}+\sqrt{n\log n})$ with probability at least $1-O(n^{-3})$, since $ \| (s_{i1}^{*}, s_{i2}^{*},\dots,\allowbreak s_{in}^{*})^\top \|_2 \leqslant 2\sqrt{n-1}\Mt$.

For term $\| \sum_{j\neq i}^nw_{ij}^tw_{ij}^{t\top}-\sigma_2^2(n-1)I_{d}  \|$ with fixed $i$, let $A$ be the $(n-1) \times d$ matrix whose rows are $\sigma_2^{-1}w^{t\top}_{ij}$ (each $\sigma_2^{-1}w^t_{ij}$ is a column vector in $\mathbb{R}^d$, so $\sigma_2^{-1}w_{ij}^{t\top}$ is a row vector). Then $S:=A^\top A = \sum_{j \neq i} \sigma_2^{-2}w^t_{ij} w_{ij}^{t\top}$. The second moment matrix for each row $\sigma_2^{-1}w_{ij}^{t\top}$ is $\Sigma = \sigma_2^{-2}\mathbb{E}[w_{ij}^t w_{ij}^{t\top}] =  I_d$. The sub-gaussian norm of $\sigma_2^{-1}w^t_{ij}$ is less than an absolute constant $c$ (since $\sigma_2^{-1}w^t_{ij}$ is standard Gaussian). Applying \cite[Remark~5.40]{vershynin2010introduction}, we obtain, with probability at least $1 - 2\exp(-c t^2)$,
\[
\left\|S - (n-1)I_d\right\| \leqslant (n-1)\max(\delta, \delta^2)\sigma_2^{2},
\]
where $ \delta = c_3\left(\sqrt{\frac{d}{n-1}} + \frac{t}{\sqrt{n-1}}\right)$. Setting $t = c'\sqrt{\log n}$ and assuming $n \geqslant d$, we obtain
\[
\left\|\sum_{j\neq i}w_{ij}^tw_{ij}^{t\top} - \sigma_2^2(n-1)I_d\right\| \leqslant c_3\sigma_2^2\sqrt{n}(\sqrt{d}+\sqrt{\log n}),
\]
with probability at least $1 - O(n^{-3})$. Therefore, by a union bound,
\begin{align}\label{ineq 4}
    \| \Delta_{\Sigma} \| \leqslant (\frac{c_2}{4}\Mt \sigma_2+c_3\sigma_2^2)\sqrt{n}(\sqrt{d}+\sqrt{\log n}),
\end{align}
with probability at least $1 - O(n^{-2})$. Combining inequalities \eqref{ineq 1}-\eqref{ineq 4} yields the desired result.
\end{proof}

\begin{Proposition}\label{prop: norm of Delta_sed M} 
  Let $M \in \mathbb{R}^{nd \times d}$ be a matrix independent of $\Delta_{Ri}$, $E_i$, and $(E^\top)_i$. Assume $\sigma_2 \leqslant \frac{\Mt \sqrt{n}}{c_4(\sqrt{d} + \sqrt{\log n})}$. Then, for a fixed $i$, there exist absolute constants $c_1, c_2, c_3, c_4$ such that,
\begin{align*}
      &\| \Delta_{i}M \fnorm \\
      &\leqslant c_1\sigma_1(\| M \fnorm+\sqrt{n}\| M_i \fnorm )(\sqrt{d}+\sqrt{\log n})\\
      &\quad +(c_2\Mt \sigma_2 +c_3\sigma_2^2)(\| M \fnorm+\sqrt{n}\| M_i \fnorm )(\sqrt{d}+\sqrt{\log n}),
  \end{align*}
with probability at least $1 - O(n^{-2})$.
\end{Proposition}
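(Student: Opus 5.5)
We split $\Delta_i M$ along the decomposition $\Delta = 2W^R + \Delta_\Sigma - \Delta_T$ and use the triangle inequality:
\[
\|\Delta_i M\fnorm \le 2\|W^R_i M\fnorm + \|(\Delta_\Sigma)_i M\fnorm + \|(\Delta_T)_i M\fnorm .
\]
Each term is then bounded by $(\|M\fnorm + \sqrt{n}\|M_i\fnorm)(\sqrt d + \sqrt{\log n})$ times the matching noise level ($\sigma_1$, $\Mt\sigma_2$ or $\sigma_2^2$). A final union bound over the finitely many events, each of probability $1-O(n^{-3})$ or $1-O(n^{-2})$, gives the stated probability.

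\textbf{Rotation noise.} We have $W^R_i M = \sum_{j\neq i} W^R_{ij} M_j$, which is linear in a Gaussian block row independent of $M$. Take the thin SVD $M = U\Sigma V^\top$ with $U \in \mathbb{R}^{nd\times d}$. Then $W^R_i U$ is a $d\times d$ Gaussian matrix, so by the same argument as in Lemma~\ref{le: E_iM}, namely \cite[Theorem~5.4]{ling2022near}, we get $\|W^R_i M\fnorm \le \|W^R_i U\|\,\|M\fnorm \le c\sigma_1(\sqrt d + \sqrt{\log n})\|M\fnorm$ with probability $1-O(n^{-3})$. The $\sqrt n\|M_i\fnorm$ slack is not needed here, though it would be if one keeps the $W^R_{ji}$ dependence coming from symmetry.

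\textbf{Block-diagonal term $\Delta_\Sigma$.} Since $\Delta_\Sigma$ is block diagonal, $(\Delta_\Sigma)_i M = D_i M_i$ with
\[
D_i = \sum_{j\ne i}\Bigl[s^*_{ij} w^{t\top}_{ij} + w^t_{ij} s^{*\top}_{ij} + \bigl(w^t_{ij} w^{t\top}_{ij} - \sigma_2^2 I_d\bigr)\Bigr].
\]
The proof of Lemma~\ref{le: norm of delta_sed in gaussian noise} already gives $\|D_i\| \le (c_2\Mt\sigma_2 + c_3\sigma_2^2)\sqrt n(\sqrt d + \sqrt{\log n})$ with probability $1-O(n^{-3})$. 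Hence $\|(\Delta_\Sigma)_i M\fnorm \le \|D_i\|\,\|M_i\fnorm$, which is exactly the $\sqrt n\|M_i\fnorm$ contribution.

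\textbf{Low-rank term $\Delta_T$.} Write
\[
(\Delta_T)_i M = \frac{1}{2n}\Bigl(E_i (T^{*\top} M) + T^*_i (E^\top M) + E_i (E^\top M)\Bigr).
\]
\begin{itemize}
\item For the first term, $M_1 := T^{*\top}M \in \mathbb{R}^{n\times d}$ is deterministic given $M$. Apply \eqref{ineq:1} of Lemma~\ref{le: E_iM}, using $\|T^{*\top}M\fnorm \le 3\Mt n\|M\fnorm$ from Lemma~\ref{le:T}. The row $(T^{*\top}M)_i$ must be bounded by $O(\Mt\sqrt n\|M\fnorm)$; for this we use $\|s^*_i\| \le 2\Mt\sqrt n$ and the column-$i$ structure of $T^*$. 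After the factor $1/(2n)$ this yields $c\Mt\sigma_2(\sqrt d + \sqrt{\log n})\|M\fnorm$.
\item For the second term, $\|T^*_i\| \le \|T^*\| \le 3\Mt n$, so it suffices to bound $\|E^\top M\fnorm$. A crude bound $\|E\|\,\|M\fnorm$ via Lemma~\ref{le:estimation of w^ti and w^tiM} loses a factor $\sqrt n$, so instead we use that $T^*_i$ has the explicit form $n R^*_i t^*_i e_i^\top$ minus the row $s^*_i$. This reduces $T^*_i E^\top M$ to a combination of $(E^\top)_i M$, handled by \eqref{ineq:2}, and $\sum_j R^*_i(t^*_j - t^*_i)(E^\top M)_j$, handled by Gaussian concentration with weights bounded by $2\Mt$.
\item For the quadratic term $E_i E^\top M$, we write $E^\top M = (E^\top M)^{(i)} + (\text{part depending on the } i\text{th row/column noise})$. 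We then use Lemma~\ref{le: E_iM} on the independent part and Lemma~\ref{le:estimation of w^ti and w^tiM} on the rest. The hypothesis $\sigma_2 \le \Mt\sqrt n/(c_4(\sqrt d + \sqrt{\log n}))$ converts the resulting $\sigma_2^2(\sqrt d + \sqrt{\log n})^2/\sqrt n$-type factors into the $\sigma_2^2(\sqrt d + \sqrt{\log n})$ form, exactly as in \eqref{ineq 3}.
\end{itemize}

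\textbf{Main obstacle.} The hardest part is the $\Delta_T$ term. $E$ is neither independent of $E_i$ nor small row-wise, and $T^*$ is dense, so naive operator-norm bounds lose $\sqrt n$. The key is to exploit the explicit structure $T^* = R^*$-weighted $(n\,\BlkDiag(t^*) - J\text{-type})$ together with $\sum_i t^*_i = 0$, so that every product reduces to either a single row of $E$ against an independent matrix (Lemma~\ref{le: E_iM}) or a diagonal block times $M_i$.
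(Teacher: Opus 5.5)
Your proposal is correct and has the same skeleton as the paper. It uses the triangle inequality over $2W^R$, $\Delta_\Sigma$, $\Delta_T$, and the block-diagonal bound $\|(\Delta_\Sigma)_iM\|_{\mathrm F}\le\|D_i\|\,\|M_i\|_{\mathrm F}$. It also uses the three-term expansion of $(\Delta_T)_iM$: \eqref{ineq:1} is applied to $M_1=T^{*\top}M$, and \eqref{ineq:2} to $(E^\top)_iM$ after splitting $T^*_i$ into $-nR_i^*t_i^*e_i^\top$ and $s^*_i$. You depart from the paper on two terms, and in both your treatment is the one that actually yields the stated form.

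\emph{Rotation noise.} For $W^R_iM$ the paper goes through $\sqrt d\,\|W^R_iM\|\le c\sigma_1\|M\|(d+\sqrt{d\log n})$. This puts $\sqrt d\|M\|\ge\|M\|_{\mathrm F}$ where $\|M\|_{\mathrm F}$ is needed. Your SVD/rotation-invariance argument, done directly in Frobenius norm, gives $\|M\|_{\mathrm F}$.

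\emph{Quadratic term.} For $E_iE^\top M$ the paper uses $\frac1{2n}\|E_i\|\|E\|\|M\|_{\mathrm F}$. This is of order $\sigma_2^2(\sqrt d+\sqrt{\log n})^2\|M\|_{\mathrm F}$, and at least of order $\sigma_2^2 d\|M\|_{\mathrm F}$, since the $(i,i)$ block $E_{ii}=\sum_l w^t_{il}$ has norm $\approx\sigma_2\sqrt{nd}$. The hypothesis on $\sigma_2$ only turns this into $\Mt\sigma_2\sqrt n(\sqrt d+\sqrt{\log n})\|M\|_{\mathrm F}$, which carries a spurious $\sqrt n$. Your leave-one-out split of $E^\top M$ is what supplies the missing $1/\sqrt n$.

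\emph{The $s^*_i$ part of $T^*_iE^\top M$.} Here you need no new concentration. The crude bound $\frac1{2n}\|s^*_i\|\|E\|\|M\|_{\mathrm F}$ with $\|s^*_i\|\le 2\Mt\sqrt n$ is what the paper uses, and it already gives $c\Mt\sigma_2(\sqrt d+\sqrt{\log n})\|M\|_{\mathrm F}$. The $\sqrt n$ loss you mention occurs only with $\|T^*_i\|\le 3\Mt n$. This step should use the uniform operator-norm bound rather than Gaussian concentration conditional on $M$, because $M$ (e.g.\ $\Phi^{(i)}$) may depend on $E^{(i)}$.

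Some details to correct when writing it out:
\begin{itemize}
\item The $i$-th row of $T^{*\top}M$ is $-n\,t_i^{*\top}R_i^{*\top}M_i-\sum_k s^{*\top}_{ki}M_k$. So it is $O(\Mt(n\|M_i\|_{\mathrm F}+\sqrt n\|M\|_{\mathrm F}))$, not $O(\Mt\sqrt n\|M\|_{\mathrm F})$. The first $\Delta_T$ term therefore contributes $c\Mt\sigma_2(\sqrt d+\sqrt{\log n})(\|M\|_{\mathrm F}+\sqrt n\|M_i\|_{\mathrm F})$, which is still within the target.
\item In the quadratic term, the dependent remainder contains $E_{ii}\,(E^\top)_iM$. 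This piece must go through \eqref{ineq:2}, because bounding $(E^\top)_iM$ by operator norms reproduces the $\sigma_2^2 d$ loss.
\item Under the antisymmetry convention $w^t_{ki}=-w^t_{ik}$ of Lemma~\ref{le:estimation of w^ti and w^tiM}, the diagonal blocks $E_{kk}$ with $k\neq i$ also contain row-$i$ noise. Their $w^t_{ki}$ parts therefore belong to the remainder too; they are harmless and contribute $O(\sigma_2^2(\sqrt d+\sqrt{\log n})^2\|M\|_{\mathrm F}/\sqrt n)$.
\item The hypothesis on $\sigma_2$ converts $\sigma_2^2(\sqrt d+\sqrt{\log n})^2/\sqrt n$ into the $\Mt\sigma_2(\sqrt d+\sqrt{\log n})$ form, not the $\sigma_2^2$ form. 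The $\sigma_2^2$ form needs $\sqrt d+\sqrt{\log n}\lesssim\sqrt n$ instead. Either lands inside the target bound.
\end{itemize}
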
  
\begin{proof}
   We decompose the norm as $\| \Delta_{i} M \fnorm \leqslant 2\| W^R_{i} M \fnorm +\| \Delta_{Ti}M \fnorm+\| \Delta_{\Sigma i} M\fnorm$.  For the first term, since $W^R_i$ is a standard Gaussian block row, Lemma~5.8 of \cite{ling2022near} gives 
    \begin{align*}
   \|  W^R_{i} M \fnorm \leqslant \sqrt{d}\|  W^R_{i} M \| \leqslant c_1\sigma_1 \| M \| (d+\sqrt{d\log n}),
 \end{align*}
 with probability at least $1 - O(n^{-2})$. We now bound $\| (\Delta_T)_i M \|_{\mathrm{F}}$. By definition,
  \begin{align}\label{norm of deltaT-deltaEpsilon}
    &\| (\Delta_{T})_i M\fnorm =\frac{1}{2n}\|(ET^{*\top})_i M+(T^*E^{\top})_i M+(EE^{\top})_i M\fnorm \nonumber \\
        & \leqslant\frac{1}{2n}( \| (ET^{*\top})_i M \fnorm +\| (T^*E^{\top})_i M \fnorm +\| (EE^{\top})_i M \fnorm).
    \end{align}
    The term $\| (T^*E^{\top})_i M \|_{\mathrm{F}}$ satisfies
  \begin{align*}\label{norm of T^*E^T M}
      \frac{1}{2n}\| T^*_iE^{\top}M \fnorm &\leqslant \frac{1}{2n}\| (\sum_js^*_{ij})(E^{\top})_i M \fnorm+\frac{1}{2n}\| s^*_iEM\fnorm,  
    \end{align*}
      Using Lemma~\ref{le:T} and Lemma~\ref{le:estimation of w^ti and w^tiM},   
    \begin{align*}    
    \frac{1}{2n}\| s^*_iE^{\top}M\fnorm 
    & \leqslant \frac{1}{2n} \|s^*_i \| \| E^{\top} \| \| M\fnorm \\&\leqslant \frac{c_2}{4}\sigma_2\Mt(\sqrt{d}+\sqrt{\log n})\| M\fnorm , 
    \end{align*}
     with probability $1-O(n^{-2})$. Since $M$ is independent of $(E^{\top})_i$, Lemma~\ref{le: E_iM} yields
    \begin{align*}
    &\frac{1}{2n}\| (\sum_js^*_{ij})(E^\top)_i M \fnorm \\
    &\leqslant \frac{c_2}{4} \sigma_2 \Mt(\| M \fnorm+\sqrt{n}\| M_i \fnorm )(\sqrt{d}+\sqrt{\log n}),
     \end{align*}
    with probability at least $1-O(n^{-3})$. Applying the bound for $\| E_i M \|_{\mathrm{F}}$ from Lemma~\ref{le: E_iM} to $\| (ET^{*\top})_i M \|_{\mathrm{F}}$, we obtain 
    \begin{align*}
        &\frac{1}{2n}\| E_i T^{*\top}M \fnorm \\
        &\leqslant \frac{c_2\sigma_2  (\sqrt{d}+\sqrt{\log n })}{4n}\| T^{*\top} M \fnorm \\
        & + \frac{c_2\sigma_2  (\sqrt{d}+\sqrt{\log n })}{4\sqrt{n}} (\| (T^{*\top}_{D})_i M \fnorm+\| (s^{*\top})_i M \fnorm)\\
        &\leqslant \frac{c_2\sigma_2  (\sqrt{d}+\sqrt{\log n })}{4n}\| T^{*\top}\| \| M \fnorm \\
        & + \frac{c_2\sigma_2  (\sqrt{d}+\sqrt{\log n })}{4\sqrt{n}} (\| \sum_j s_{ij}^{*} \|_2 \|  M_i \fnorm+\| (s^{*\top})_i \|_2 \| M \fnorm)\\
        & \leqslant \frac{c_2}{4} \sigma_2 \Mt (\sqrt{d}+\sqrt{\log n }) (\| M \fnorm +\sqrt{n} \| M_i \fnorm),
   \end{align*} 
   with probability $1-O(n^{-3})$. The second inequality uses $T^* = T^*_D - s^*$, and the third uses $(T^{*\top}_{D})_iM=(\sum_j s_{ij}^{*\top}) M_i$, where $T^{*\top}_{D}$ is block diagonal matrix. For $\frac{1}{2n} \| (EE^{\top})_i M \|_{\mathrm{F}}$, using the condition $\sigma_2 \leqslant \frac{\Mt \sqrt{n}}{c_4(\sqrt{d} + \sqrt{\log n})}$,
  \begin{align*}
      \frac{1}{2n}\| (EE^{\top})_iM \fnorm &\leqslant \frac{1}{2n}\| E_i \| \| E^{\top} \| \| M \fnorm \\
      &\leqslant \frac{c_2}{4} \Mt \sigma_2 (\sqrt{d}+\sqrt{\log n})  \| M \fnorm ,
  \end{align*}
  with probability $1-O(n^{-2})$. Finally, for $\| (\Delta_{\Sigma})_i M \|_{\mathrm{F}}$,
  \begin{align*}
      \| (\Delta_{\Sigma})_iM \fnorm & \leqslant \| \Delta_{\Sigma i} \| \| M_i \fnorm \\
      &\leqslant \sigma_2 (\frac{c_2}{4}\Mt +c_3\sigma_2) (\sqrt{nd}+\sqrt{n\log n})\| M_i \fnorm,
  \end{align*}
 with probability $1-O(n^{-3})$. As a result, we obtain
  \begin{align*}
     &\| \Delta_{i}M \fnorm \\
     &\leqslant c_1\sigma_1(\| M \fnorm+\sqrt{n}\| M_i \fnorm )(\sqrt{d}+\sqrt{\log n})\\
     &+(c_2\Mt \sigma_2 +c_3\sigma_2^2)(\| M \fnorm+\sqrt{n}\| M_i \fnorm )(\sqrt{d}+\sqrt{\log n}).
  \end{align*}
 Collecting the bounds for $W^R_i$, $(\Delta_T)_i$, and $(\Delta_{\Sigma})_i$, and adjusting the constants $c_1, c_2, c_3$, we obtain the desired inequality.
\end{proof}

\begin{lemma}[Proof of inequalities \eqref{eq6} and \eqref{eq7}]\label{le: leave-one-out}
Let $\Phi^{(i)}$ be the matrix whose columns are the $d$ smallest eigenvectors of the data matrix $H^{(i)}:=2nI_{nd}-R^*R^{*\top}+\Sigma^*-\frac{1}{2n}T^*T^{*\top}+\Delta^{(i)}$, where $\Delta^{(i)}$ is obtained from $\Delta$ by removing the $i$-th block row and $i$-th block column of $2W^R$, the $i$-th block row and $i$-th column of $E$, and the $i$-th diagonal block of $\Delta_{\Sigma}$. Assume $(c_1\sigma_1 + c_2\sigma_2 \Mt + c_3\sigma_2^2) \leqslant \dfrac{c_0\sqrt{n}}{\sqrt{d} + \sqrt{\log n}}$ for some absolute positive constants $c_0, c_1, c_2, c_3, c_4$ such that
\begin{align}
   &\max_i\| \Delta_i\Phi \fnorm \nonumber \\
     &\leqslant  (\Mt^2+1)(c_1\sigma_1+c_2\sigma_2 \Mt+c_3\sigma_2^2)  (d\sqrt{n}+\sqrt{nd \log n}), \\
    &\max_i \| \Phi_i \fnorm \leqslant c_4(\Mt^2+1)\sqrt{d},
\end{align}
with probability at least $1 - O(n^{-2})$.
\end{lemma}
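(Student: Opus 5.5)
We follow the standard leave-one-out scheme (as in \cite{ling2022near,liu2023resync}) and prove the two bounds jointly. Fix $i$. Let $\Phi^{(i)}$ be the leave-one-out eigenvectors, normalized so that $\Phi^{(i)\top}\Phi^{(i)}=nI_d$, and align them to $\Phi$ via $Q^{(i)}=\arg\min_{Z\in\od}\|\Phi^{(i)}Z-\Phi\fnorm$. We split
\[
\|\Delta_i\Phi\fnorm\le \|\Delta_i\Phi^{(i)}Q^{(i)}\fnorm+\|\Delta_i\|\,\|\Phi-\Phi^{(i)}Q^{(i)}\fnorm .
\]

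\textbf{First term.} By construction of $\Delta^{(i)}$ (delete the $i$-th block row and column of $W^R$, the $i$-th block row/column of $E$, and the $i$-th diagonal block of $\Delta_\Sigma$), $\Phi^{(i)}$ is independent of $W^R_i$, $E_i$, $(E^\top)_i$ and of the $i$-th diagonal block of $\Delta_\Sigma$. Proposition~\ref{prop: norm of Delta_sed M} with $M=\Phi^{(i)}$ therefore gives
\[
\|\Delta_i\Phi^{(i)}\fnorm\lesssim \sigma_{\rm eff}\,(\sqrt{d}+\sqrt{\log n})\bigl(\sqrt{nd}+\sqrt n\|\Phi^{(i)}_i\fnorm\bigr),
\]
where $\sigma_{\rm eff}:=c_1\sigma_1+c_2\Mt\sigma_2+c_3\sigma_2^2$.

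To bound $\|\Phi^{(i)}_i\fnorm$, we repeat the argument of Lemma~\ref{le:phi for sed} for $H^{(i)}$. The $i$-th block row of $\Delta^{(i)}$ contains only the remaining terms that depend on the other rows (essentially pieces of $\Delta_T$ built from $E$ with row $i$ removed), and these are controlled deterministically by $\|\Delta^{(i)}\|$. This yields $\|\Phi^{(i)}_i\fnorm\le c(\Mt^2+1)\sqrt d$. A union bound over $i$ costs the $\log n$ factors and leaves probability $1-O(n^{-2})$.

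\textbf{Perturbation term.} Apply Davis--Kahan (\cite[Theorem 5]{liu2023unified}) to $H$ versus $H^{(i)}$. The spectral gap is of order $n$ by Lemma~\ref{le: the estimation of delta of GSP for SED}, since $\|\Delta\|,\|\Delta^{(i)}\|\le n/4$ under the noise assumption. Hence
\[
\|\Phi-\Phi^{(i)}Q^{(i)}\fnorm\lesssim \frac{\sqrt n\,\|(\Delta-\Delta^{(i)})\Phi^{(i)}\fnorm}{n}.
\]
The difference $\Delta-\Delta^{(i)}$ is supported on the $i$-th block row and column, plus the rank-structured terms of $\Delta_T$ involving $E_i$. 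Its action on $\Phi^{(i)}$ is therefore bounded by
\[
\|\Delta_i\Phi^{(i)}\fnorm+\|\Delta_i^{\top}\|\,\|\Phi^{(i)}_i\fnorm,
\]
and both pieces were bounded in the first step. Combining, with $\|\Delta_i\|\le\|\Delta\|$ from Lemma~\ref{le: norm of delta_sed in gaussian noise}, the product $\|\Delta_i\|\|\Phi-\Phi^{(i)}Q^{(i)}\fnorm$ is of order $\|\Delta\|/\sqrt n$ times the first-step bound. This is a lower-order multiple of that bound under $\sigma_{\rm eff}(\sqrt d+\sqrt{\log n})\le c_0\sqrt n$, which gives \eqref{eq6}.

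\textbf{Bound on $\|\Phi_i\fnorm$.} Plugging \eqref{eq6} into Lemma~\ref{le:phi for sed} gives
\[
\|\Phi_i\fnorm\le \tfrac{8}{7}(\Mt^2+1)\sqrt d+\tfrac{4}{7n}\|\Delta_i\Phi\fnorm\le c_4(\Mt^2+1)\sqrt d,
\]
again using the noise condition.

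\textbf{Main obstacle.} The hard step is the perturbation term. In $\Delta_T=\frac1{2n}(ET^{*\top}+T^*E^\top+EE^\top)$, removing row $i$ of $E$ does not produce a difference supported only on block row/column $i$. The term $T^*E^\top$ with $E$'s $i$-th column removed changes every row. These non-local pieces must be bounded using the factor $\frac1{2n}$, $\|T^*\|\le3\Mt n$ (Lemma~\ref{le:T}), and $\|(E^\top)_i\|\lesssim\sigma_2\sqrt n(\sqrt d+\sqrt{\log n})$ (Lemma~\ref{le:estimation of w^ti and w^tiM}). This shows their contribution is $O(\Mt\sigma_2(\sqrt d+\sqrt{\log n})\sqrt n)$, small relative to the gap $n$. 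If this does not close directly, the fallback is to modify the leave-one-out matrix so that only the $i$-th row of $E$ is replaced by an independent copy.
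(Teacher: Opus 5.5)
\textbf{There is a gap in your direct bound on $\|\Phi^{(i)}_i\|_{\mathrm{F}}$.} Your skeleton matches the paper: split $\Delta_i\Phi$ through the aligned leave-one-out matrix, apply Proposition~\ref{prop: norm of Delta_sed M} with $M=\Phi^{(i)}$, use Davis--Kahan for $\Phi-\Phi^{(i)}Q^{(i)}$, then Lemma~\ref{le:phi for sed}. The problem is the step where you bound $\|\Phi^{(i)}_i\|_{\mathrm{F}}$ directly; as justified, it fails.

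Rerunning Lemma~\ref{le:phi for sed} on $H^{(i)}$ gives $\|\Phi^{(i)}_i\|_{\mathrm{F}}\le\frac{8}{7}(\Mt^2+1)\sqrt d+\frac{4}{7n}\|\Delta^{(i)}_i\Phi^{(i)}\|_{\mathrm{F}}$. Because $\Phi^{(i)}$ depends on $\Delta^{(i)}_i$, the last term must be bounded deterministically. If you control it by $\|\Delta^{(i)}\|$, as you propose, you get $\frac{4}{7n}\|\Delta^{(i)}\|\sqrt{nd}\lesssim\sigma_{\mathrm{eff}}(\sqrt d+\sqrt{\log n})\sqrt d$. The noise hypothesis only bounds this by $c_0\sqrt{nd}$, a factor $\sqrt n$ too large. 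Fed into the Proposition, it inflates $\|\Delta_i\Phi^{(i)}\|_{\mathrm{F}}$ by the same factor, and \eqref{eq6} is lost.

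Your route can be rescued by an exact cancellation you did not state:
\begin{itemize}
\item the $i$-th block rows of the leave-one-out $2W^R$ and $\Delta_\Sigma$ vanish;
\item $E^{(i)}_i=0$ kills $E^{(i)}T^{*\top}$ and $E^{(i)}E^{(i)\top}$ in row $i$;
\item $(T^*_D)_iE^{(i)\top}=(\sum_j s^*_{ij})\,(\text{$i$-th column of }E^{(i)})^\top=0$.
\end{itemize}
Hence $\Delta^{(i)}_i=\frac{1}{2n}s^*_iE^{(i)\top}$. By Lemmas~\ref{le:T} and~\ref{le:estimation of w^ti and w^tiM} its norm is $\lesssim\Mt\sigma_2(\sqrt d+\sqrt{\log n})$, a factor $\sqrt n$ below $\|\Delta^{(i)}\|$. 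This gives $\|\Phi^{(i)}_i\|_{\mathrm{F}}\le c(\Mt^2+1)\sqrt d$.

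The paper needs no such structure. It writes $\|\Phi^{(i)}_i\|_{\mathrm{F}}\le\|\Phi_i\|_{\mathrm{F}}+\|\Phi-\Phi^{(i)}S^{(i)}\|_{\mathrm{F}}$ and uses the perturbation bound with $c_0\le 1/4$ to get $\|\Phi^{(i)}_i\|_{\mathrm{F}}\le 2\max_j\|\Phi_j\|_{\mathrm{F}}$. It then closes a self-consistent inequality for $\max_j\|\Phi_j\|_{\mathrm{F}}$ through Lemma~\ref{le:phi for sed}, proving \eqref{eq7} before \eqref{eq6}. Your corrected route gives a non-circular order (\eqref{eq6} first, \eqref{eq7} as a corollary) but depends on the exact form of $\Delta^{(i)}$. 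The paper's bootstrap does not depend on how the leave-one-out matrix is built.

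\textbf{Your Davis--Kahan step carries a spurious $\sqrt n$.} With $\Phi^{(i)\top}\Phi^{(i)}=nI_d$ the normalizations cancel, giving $\|\Phi-\Phi^{(i)}Q^{(i)}\|_{\mathrm{F}}\lesssim\|(\Delta-\Delta^{(i)})\Phi^{(i)}\|_{\mathrm{F}}/n$; the paper has constant $\frac{4\sqrt2}{5}$. With your extra factor, $\|\Delta_i\|\,\|\Phi-\Phi^{(i)}Q^{(i)}\|_{\mathrm{F}}$ is $\|\Delta\|/\sqrt n\approx\sigma_{\mathrm{eff}}(\sqrt d+\sqrt{\log n})$ times the first-step bound. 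The hypothesis allows that factor to be as large as $c_0\sqrt n$, so your ``lower-order'' claim is false as written. With the correct scaling the factor is $\|\Delta\|/n\le 1/4$, which is enough.

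\textbf{The non-local pieces need no new construction.} Your concern about the non-local pieces of $\Delta_T-\Delta_T^{(i)}$ is resolved as you sketch and as the paper does. Bound them term by term, e.g.\ $\frac{1}{2n}\|T^*\|\,\|(E^\top)_i\Phi^{(i)}\|_{\mathrm{F}}$ and $\frac{1}{2n}\|T^*\|\,\|E_i\|\,\|\Phi^{(i)}_i\|_{\mathrm{F}}$, using $\|T^*\|\le 3\Mt n$. No modified leave-one-out construction is needed.
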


\begin{proof}
We employ a leave-one-out argument. Note that the quantity $\| \Delta_i\Phi \fnorm $ is invariant under the orthogonal group $\od$. Define
$$
S^{(i)}:=\arg \min_{S \in \od} \| \Phi-\Phi^{(i)}S\fnorm,
S^{(i)} = \Pi_{\od}((\Phi^{(i)})^{\top}\Phi).
$$
We decompose $\| \Delta_i\Phi  \|$ into three terms and find an upper bound for each of them:
\begin{align}
    \| \Delta_i\Phi \fnorm & \leqslant \| \Delta_i(\Phi -\Phi^{(i)}S^{(i)}+\Phi^{(i)}S^{(i)})\fnorm\nonumber\\
    & \leqslant \| \Delta_i\|  \| \Phi -\Phi^{(i)}S^{(i)}\fnorm + \| \Delta_i\Phi^{(i)}\fnorm .\label{ineq01}
\end{align}
The term $\| \Delta_i \Phi^{(i)} \|_{\mathrm{F}}$ can be bounded by using Proposition~\ref{prop: norm of Delta_sed M}. We now bound $\| \Phi - \Phi^{(i)}S^{(i)} \|_{\mathrm{F}}$ using Davis--Kahan theorem. The $d$-th smallest eigenvalue of $H$ is at most $\| \Delta \|$, and the $(d+1)$-th smallest eigenvalue of $H$ is at least $2n - \| \Delta \|$. Hence,
\begin{align*}
    \| \Phi -\Phi^{(i)}S^{(i)}\fnorm
    &\leqslant \frac{\sqrt{2} \| (\Delta-\Delta^{(i)})\Phi^{(i)} \fnorm}{2n-3\| \Delta\|} \\
    &\leqslant \frac{4\sqrt{2}\| (\Delta-\Delta^{(i)})\Phi^{(i)} \fnorm}{5n},
\end{align*}
provided $\| \Delta \| \leqslant n/4$. Let $\Phi_i^{(i)}$ be the $i$-th $d \times d$ block of $\Phi^{(i)}$. Then
\begin{align*}
    &\| (\Delta-\Delta^{(i)})\Phi^{(i)} \fnorm \\
    & \leqslant 2\| W^R_{i} \| \| \Phi_{i}^{(i)} \fnorm +2 \| W^R_{i}\Phi^{(i)} \fnorm +\| \Delta_{\Sigma i} \| \| \Phi_{i}^{(i)} \fnorm  \\
    &+\frac{1}{2n} \| E_iT^{*\top} \Phi^{(i)} \fnorm +\frac{1}{2n}\| (E^\top)_i \|_2 \| T^*  \|\| \Phi_{i}^{(i)} \fnorm \\
    &+\frac{1}{2n}\| T^* \| \| (E_{D})_i  \|\| \Phi_{i}^{(i)} \fnorm+\frac{1}{2n}\| (E_{D})_i \| \| T^*  \|\| \Phi_{i}^{(i)} \fnorm \\
    & + \frac{1}{2n}\| T^* \| \| (E^\top)_i \Phi^{(i)} \fnorm+ \frac{1}{2n}\| T^* \| \|  E_i \| \| \Phi_{i}^{(i)} \fnorm \\
    &+ \frac{1}{2n} \| E \| \| E_i \Phi^{(i)} \fnorm +\frac{1}{2n} \| E \| \| E_i \| \| \Phi^{(i)}_i \fnorm \\
    &+\frac{1}{2n}\| E \| \| (E_{D})_i  \|\| \Phi_{i}^{(i)} \fnorm+\frac{1}{2n}\| (E_{D})_i \| \| E  \|\| \Phi_{i}^{(i)} \fnorm\\
    &+ \frac{1}{2n} \| E_i E^{(i)}  \Phi^{(i)} \fnorm +\frac{1}{2n} \|  (E^\top)_i\|_2 \| E^{(i)} \| \| \Phi^{(i)}_i \fnorm\\
    & \leqslant (c_1\sigma_1+c_2\sigma_2 \Mt+c_3\sigma_2^2)(\sqrt{d}+\sqrt{\log n})\| \Phi \fnorm \\
    &+(c_1\sigma_1+c_2\sigma_2 \Mt+c_3\sigma_2^2)(\sqrt{d}+\sqrt{\log n})\sqrt{n}\| \Phi_{i}^{(i)} \fnorm),
\end{align*}
with probability at least $1 - O(n^{-2})$. Here $E^{(i)}$ is obtained from $E$ by deleting its $i$-th block row and $i$-th column. Moreover, $(E^\top)_i$ denotes the $i$-th row of $E^\top \in \mathbb{R}^{n \times nd}$. And then,
\begin{align}
    &\| \Phi^{(i)}_{i}  \fnorm - \| \Phi_{i} \fnorm \\
    & \leqslant \| \Phi_{i}-\Phi^{(i)}_{i}S^{(i)}\fnorm \leqslant \| \Phi -\Phi^{(i)}S^{(i)}\fnorm \nonumber\\
    &\leqslant (c_1\sigma_1+c_2\sigma_2 \Mt+c_3\sigma_2^2)\frac{(\sqrt{d}+\sqrt{\log n})}{\sqrt{n}}(\| \Phi_{i}^{(i)} \fnorm+\sqrt{d}) \label{ineq:phi-phi_i}.
\end{align}
Take $c_0\le 1/4$. Since $(c_1\sigma_1+c_2\sigma_2 \Mt+c_3\sigma_2^2)\frac{\sqrt{d}+\sqrt{\log n}}{\sqrt{n}}\leqslant c_0\leqslant \frac{1}{4}$ and $\max_i \| \Phi_i \|_{\mathrm{F}} \geqslant \sqrt{d}$, we obtain
\begin{align}
    &\| \Phi^{(i)}_{i}  \fnorm - \| \Phi_{i} \fnorm \leqslant\| \Phi_{i}-\Phi^{(i)}_{i}S^{(i)}\fnorm  \nonumber\\
    &\leqslant \frac{1}{4}(\| \Phi_{i}^{(i)} \fnorm+\sqrt{d}) \leqslant \frac{1}{4}(\| \Phi_{i}^{(i)} \fnorm+\max_j \| \Phi_{j} \fnorm)\label{ineq02}.
\end{align}
Hence, $\| \Phi^{(i)}_i \|_{\mathrm{F}} \leqslant 2 \max_j \| \Phi_j \|_{\mathrm{F}}$. Combining inequalities \eqref{ineq01} and \eqref{ineq02}, and assuming $\| \Delta \| \le  n/4$,
\begin{align}
    &\max_i\| \Delta_i\Phi \fnorm \nonumber \leqslant \frac{3}{4}\max_i \| \Delta_i\|  \max_i\| \Phi_{i} \fnorm + \max_i\| \Delta_i\Phi^{(i)}\fnorm \nonumber\\
    & \leqslant \frac{3n}{16}\max_i\| \Phi_{i} \fnorm + \max_i\| \Delta_i\Phi^{(i)}\fnorm .\label{ineq04}
\end{align}
Combining Lemma~\ref{le:phi for sed}, inequality \eqref{ineq04}, Proposition~\ref{prop: norm of Delta_sed M}, and the assumption $(c_1\sigma_1+c_2\sigma_2 \Mt+c_3\sigma_2^2)\frac{\sqrt{d}+\sqrt{\log n}}{\sqrt{n}}\leqslant c_0\leqslant \frac{1}{4}$,
\begin{align*}
    &\max_i\| \Phi_{i} \fnorm  \leqslant \frac{8(\Mt^2+1)\sqrt{d}}{7} +\frac{4}{7 n} \| \Delta_i\Phi \fnorm\\
    &\leqslant 2(\Mt^2+1)\sqrt{d}+\frac{3}{28}\max_i\|\Phi_i \fnorm +\frac{4}{7n}\max_i \|\Delta_i \Phi^{(i)} \fnorm \\
    &\leqslant 3(\Mt^2+1)\sqrt{d}+\frac{3}{28}\max_i\|\Phi_i \fnorm +\frac{1}{7}\max_i \| \Phi_i^{(i)} \fnorm \\
    &\leqslant 3(\Mt^2+1)\sqrt{d}+\frac{3}{28}\max_i\|\Phi_i \fnorm +\frac{2}{7}\max_i \| \Phi_i \fnorm. 
\end{align*}
Therefore, $\max_i\| \Phi_{i} \fnorm \leqslant c_4(\Mt^2+1)\sqrt{d}$.
Finally, using inequality \eqref{ineq01}, Proposition~\ref{prop: norm of Delta_sed M}, and Lemma~\ref{le: norm of delta_sed in gaussian noise},
$\| \Delta_i\Phi \fnorm  \leqslant (\Mt^2+1)(c_1\sigma_1+c_2\sigma_2 \Mt+c_3\sigma_2^2)  (d\sqrt{n}+\sqrt{nd \log n})$ with probability at least $1-O(n^{-2})$.
\end{proof}

\subsection{Proof of Lemma~\ref{le: translation error bound}}\label{subsec:translation error bound}

Before we prove Lemma \ref{le: translation error bound}, we need Lemma \ref{eq: n projection inequality}, which is a simple corollary of \cite[Lemma~2]{liu2023unified}.
\begin{lemma}\label{eq: n projection inequality}
    $\| \Pi^n_{\sod}(Y) - X \|_F \leq 2 \| Y - X \fnorm$ for $Y\in \mathbb{R}^{nd \times d}$ and $X\in \mathbb{R}^{nd \times d}$ where each block $X_i \in \sod$.
\end{lemma}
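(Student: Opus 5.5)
The plan is to reduce the statement to the single-block projection inequality \cite[Lemma~2]{liu2023unified} and apply it block by block. That lemma says: for any $Y_i\in\mathbb{R}^{d\times d}$ and any $X_i\in\sod$,
\[
\|\Pi_{\sod}(Y_i)-X_i\|_{\mathrm{F}}\le 2\|Y_i-X_i\|_{\mathrm{F}}.
\]
This is exactly the inequality already used as the first step in the proof of Lemma~\ref{lem:step 1}.

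By the definition of $\Pi^n_{\sod}$, the $i$-th block of $\Pi^n_{\sod}(Y)$ is $\Pi_{\sod}(Y_i)$, and each block $X_i$ lies in $\sod$ by hypothesis. The single-block inequality therefore applies to every pair $(Y_i,X_i)$, $i\in[n]$.

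Next, square the block inequalities and sum over $i$, using the fact that the squared Frobenius norm of a block column matrix is the sum of the squared Frobenius norms of its blocks:
\[
\|\Pi^n_{\sod}(Y)-X\|_{\mathrm{F}}^2=\sum_{i=1}^n\|\Pi_{\sod}(Y_i)-X_i\|_{\mathrm{F}}^2\le 4\sum_{i=1}^n\|Y_i-X_i\|_{\mathrm{F}}^2=4\|Y-X\|_{\mathrm{F}}^2.
\]
Taking square roots gives the claim.

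There is no real obstacle here. The only point needing care is that \cite[Lemma~2]{liu2023unified} is valid for every $X_i\in\sod$, not only for the nearest point to $Y_i$. It is also worth noting why the constant is $2$: the triangle inequality gives $\|\Pi_{\sod}(Y_i)-X_i\|_{\mathrm{F}}\le\|\Pi_{\sod}(Y_i)-Y_i\|_{\mathrm{F}}+\|Y_i-X_i\|_{\mathrm{F}}$, and the first term is at most $\|Y_i-X_i\|_{\mathrm{F}}$ because $\Pi_{\sod}(Y_i)$ is a nearest point of $\sod$ to $Y_i$ and $X_i\in\sod$.
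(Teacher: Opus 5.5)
Your proposal is correct and takes the same approach as the paper: apply \cite[Lemma~2]{liu2023unified} to each block, sum the squared Frobenius norms, and take square roots. Your closing triangle-inequality remark, which uses the nearest-point property of $\Pi_{\sod}(Y_i)$ against $X_i\in\sod$, also makes the single-block bound self-contained without the citation.
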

\begin{proof}
Let $B:=\Pi^n_{\sod}(Y) - X $, we have
\begin{equation*}
    \begin{split}
    &\| \Pi^n_{\sod}(Y) - X \|_F^2=\tr (B^\top B)=\sum_{i=1}^n(\tr(B_i^\top B_i))\\
    &=\sum_{i=1}^n\|B_i\fnorm^2 \leqslant \sum_{i=1}^n 4\|Y_i-X_i\fnorm^2 = 4\| Y-X\fnorm^2.
    \end{split}
\end{equation*}
where the inequality is due to \cite[Lemma~2]{liu2023unified}.
\end{proof}
Here Lemma~\ref{le:phi-RQ2} is an operator norm error bound for $\|\Phi-R^*\Bar{Q}\|$:
\begin{lemma}\label{le:phi-RQ2}
There exist absolute constants $c_1, c_2, c_3, c_4 > 0$ such that if $\sigma_2\leqslant \frac{\Mt\sqrt{n}}{c_4(\sqrt{d}+\sqrt{\log n})}$, then
    $\| \Phi -R^* \Bar{Q}\| \leqslant c_0(c_1\sigma_1+c_2\Mt \sigma_2+c_3\sigma^2_2)  (\sqrt{d}+\sqrt{\log n})$,
     with probability at least $1 - O(n^{-2})$.
\end{lemma}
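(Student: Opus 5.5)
The bound will come from a Davis--Kahan argument in operator norm, applied to the centered matrix $H$ from Lemma~\ref{le: Hi}. The plan has two inputs. The first is the exact spectral structure of the noiseless part $H^* := 2nI_{nd}-2R^*R^{*\top}+\BlkDiag(R_1^*,\dots,R_n^*)(\Xi^*-\Upsilon^*)\BlkDiag(R_1^*,\dots,R_n^*)^\top$. The second is the high-probability bound on $\|\Delta\|$ from Lemma~\ref{le: norm of delta_sed in gaussian noise}.

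\textbf{Step 1: spectral gap of $H^*$.} As used in Lemma~\ref{le:phi-RQ} and Lemma~\ref{le: the estimation of delta of GSP for SED}, the columns of $\frac{1}{\sqrt n}R^*$ span the null space of $H^*$, so its $d$ smallest eigenvalues are $0$. I would then lower bound the $(d+1)$-th eigenvalue by $c\,n$. On the orthogonal complement of $\mathrm{range}(R^*)$, the term $2nI_{nd}-2R^*R^{*\top}$ acts as $2nI$. The $t^*$-dependent part $\Sigma^*-\frac{1}{2n}T^*T^{*\top}$ is positive semidefinite, being a Schur complement of the translation least-squares quadratic form. Hence the gap is at least $2n$, which matches the gap $2n-\|\Delta\|$ already used in the proof of Lemma~\ref{le: leave-one-out}.

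\textbf{Step 2: operator-norm Davis--Kahan.} Set $P=\frac{1}{\sqrt n}\Phi$ and $P^*=\frac{1}{\sqrt n}R^*$, both with orthonormal columns. Weyl's inequality, as in Lemma~\ref{le: the estimation of delta of GSP for SED}, places the $d$ smallest eigenvalues of $H$ in $[-\|\Delta\|,\|\Delta\|]$. Davis--Kahan then gives $\|\sin\Theta(P,P^*)\|\le \|\Delta\|/(2n-2\|\Delta\|)\le c\|\Delta\|/n$ whenever $\|\Delta\|\le n/4$. The standard alignment bound for the Procrustes minimizer, namely $\min_{Z\in\od}\|P-P^*Z\|\le\sqrt2\|\sin\Theta\|$ (the operator-norm version of \cite[Theorem 5]{liu2023unified}), should transfer to $\Bar Q$ with a constant loss. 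The subtlety is that $\Bar Q$ minimizes the Frobenius norm, not the operator norm. I would handle it by writing $\Bar Q=\Pi_{\od}(R^{*\top}\Phi)$ and bounding $\|P-P^*\Bar Q\|\le\|(I-P^*P^{*\top})P\|+\|P^{*\top}P-\Bar Q\|$. Each term is $O(\|\sin\Theta\|)$; the second follows because the singular values of $P^{*\top}P$ are the cosines of the principal angles, and the polar factor deviates by at most $1-\cos^2\le\sin^2$. Multiplying by $\sqrt n$ gives $\|\Phi-R^*\Bar Q\|\le c\|\Delta\|/\sqrt n$.

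\textbf{Step 3: substitute the noise bound.} Under the hypothesis $\sigma_2\le \Mt\sqrt n/(c_4(\sqrt d+\sqrt{\log n}))$, Lemma~\ref{le: norm of delta_sed in gaussian noise} gives $\|\Delta\|\le(c_1\sigma_1+c_2\Mt\sigma_2+c_3\sigma_2^2)\sqrt n(\sqrt d+\sqrt{\log n})$ with probability $1-O(n^{-2})$. Dividing by $\sqrt n$ yields exactly the claimed rate. The condition $\|\Delta\|\le n/4$ needed for Step 2 either holds under the standing smallness assumption of Theorem~\ref{thm: Theorem of Spectral method in Gaussian noise model}, or the bound is trivial: $\|\Phi-R^*\Bar Q\|\le 2\sqrt n\le c\|\Delta\|/\sqrt n$ when $\|\Delta\|>n/4$. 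So no extra hypothesis is needed, after adjusting $c_0$.

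\textbf{Main obstacle.} The delicate point is Step 1, specifically the positive semidefiniteness of the translation part. I would verify it directly from the derivation of \eqref{opt1}: for any $R$, $\tr(R^\top(\hat\Sigma-\frac{1}{2n}\hat T\hat T^\top)R)=\min_t\sum_{ij}\|t_j-t_i-R_i^\top s_{ij}\|^2/2\ge0$, and the same holds with $s^*$ in place of $s$. It is $\Delta$, not the noiseless part, that carries the centering by $\sigma_2^2(n-1)I$, so the noiseless translation part is indeed positive semidefinite and the gap bound survives.
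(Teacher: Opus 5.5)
Your proposal is correct and follows essentially the same route as the paper: an operator-norm Davis--Kahan bound comparing the bottom-$d$ eigenspace of $H$ with that of its noiseless part, whose $d$ zero eigenvalues are separated from the rest by a gap of order $n$. This gives $\|\Phi - R^*\bar{Q}\| \lesssim \|\Delta\|/\sqrt{n}$, into which you substitute the bound on $\|\Delta\|$ from Lemma~\ref{le: norm of delta_sed in gaussian noise}. Your Schur-complement argument that $\Sigma^* - \frac{1}{2n}T^*T^{*\top}$ is positive semidefinite, which justifies the gap of $2n$, and your separate handling of the case $\|\Delta\| > n/4$ fill in details the paper leaves implicit.
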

\begin{proof}
The smallest $d$ eigenvalues of $2nI_{nd}-2R^{*}R^{*\top}+\Sigma^*-\frac{1}{2n}T^*T^{*\top}$ are $0$. Each column of $\frac{1}{\sqrt{n}}R^*$ is an eigenvector of $2nI_{nd}-2R^{*}R^{*\top}+\Sigma^*-\frac{1}{2n}T^*T^{*\top}$. Moreover, $\| R^* \| = \sqrt{n}$. Furthermore, each column of $\frac{1}{\sqrt{n}}\Phi$ is a normalized eigenvector of $H$. 
We next apply the Davis-Kahan theorem~\cite[Theorem 6.2]{ling2022near} and \cite[Lemma 6.3]{ling2022near}. Specifically, we flip the sign of the ground truth matrix $2nI_{nd}-2R^{*}R^{*\top}+\Sigma^*-\frac{1}{2n}T^*T^{*\top}$ and the data matrix $H$, and then analyze the leading $d$ eigenvectors of both matrices using the theorem and the Lemma. This results in
\begin{align}
    \frac{1}{\sqrt{n}}\| \Phi -R^* \Bar{Q}\|&\leqslant 2\frac{1}{\sqrt{n}}\| (I_n-\frac{1}{n}\Phi \Phi^\top)R^*\| \notag\\
    &\leqslant \frac{c_0\| \Delta \frac{1}{\sqrt{n}}R^* \|}{n} \leqslant\frac{c_0\| \Delta \|}{n}. \label{eqtt0}
\end{align}
According to the estimation in Lemma~\ref{le: norm of delta_sed in gaussian noise}, we have $\|\Delta \| \leqslant (c_1\sigma_1+c_2\Mt \sigma_2+c_3\sigma^2_2)  (\sqrt{nd}+\sqrt{n \log n})$. Substituting this into \eqref{eqtt0} completes the proof.
\end{proof}
\begin{proof}[Proof of Lemma~\ref{le: translation error bound}]
From the structure of $\hat{T}$, for any fixed $i$,
\begin{align*}
&\| \hat{t}_i - (\Pi_{\sod}(\Bar{Q}\Phi_1^\top))^\top t_i^* \|_2\\
& = \frac{1}{2n}\|  (T^{*\top})_iR^{*}\Pi_{\sod}(\Bar{Q}\Phi_1^\top)-(\hat{T}^\top)_i\Pi_{\sod}^n(\Phi \Phi_1^\top )\|_2 \\
&\leqslant \frac{1}{2n}\| nt_i^{*\top}R_i^{*\top}R_i^{*}\Pi_{\sod}(\Bar{Q}\Phi_1^\top)-(nt_i^{*\top}R_i^{*\top})\Pi^n_{\sod}(\Phi \Phi_1^\top )_i\|_2\\
&+\frac{1}{2n}\| (s^{*\top})_iR^{*}\Pi_{\sod}(\Bar{Q}\Phi_1^\top)-(s^{*\top})_i\Pi^n_{\sod}(\Phi \Phi_1^\top )\|_2 \\
&+\frac{1}{2n}\|(E_D)_i\Pi_{\sod}^n(\Phi \Phi_1^\top )\|_2+\frac{1}{2n}\| ((w^t)^{\top})_i\Pi^n_{\sod}(\Phi \Phi_1^\top )\|_2.
\end{align*}

The above inequality uses the decomposition $(\hat{T}^\top)_i=(\hat{T}^\top_{D})_i-(s^\top)_i=(T^{*\top}_{D})_i+(E_D)_i-(s^{*\top})_i+((w^t)^{\top})_i$ and the triangle inequality. Moreover, we have
\begin{align}
&\| \hat{t}_i - (\Pi_{\sod}(\Bar{Q}\Phi_1^\top))^\top t_i^* \|_2 \notag \\
    & \leqslant \Mt\underbrace{\| R_i^{*}\Pi_{\sod}(\Bar{Q}\Phi_1^\top)-\Phi_i \Phi_1^\top\fnorm}_{\textcircled{1}}+\underbrace{\frac{\sqrt{d}}{2n}\| \sum_{j\neq i}^n w_{ij}^\top \|_2}_{\textcircled{2}} \notag\\
&+\underbrace{\frac{1}{2n}\| (s^{*\top})_i \|_2 }_{\textcircled{3}}\underbrace{\| R^{*}\Pi_{\sod}(\Bar{Q}\Phi_1^\top)-\Phi \Phi_1^\top\fnorm}_{\textcircled{4}} \notag\\
&+\underbrace{\frac{1}{2n}\| ((w^t)^{\top})_i\Pi^n_{\sod}(\Phi \Phi_1^\top )\|_2 }_{\textcircled{5}}, \label{eqt0}
\end{align}
where we use the fact $\Mt = \max_{i\in[n]}\| t_i^* \|_2$, $R^*_i$ and $\Pi_{\sod}(\Phi \Phi_1^\top )_i$ orthogonal matrices, and Lemma \ref{eq: n projection inequality} to obtain term \textcircled{4}. Therefore, we bound each term in \eqref{eqt0} separately. For term \textcircled{1} in \eqref{eqt0},
\begin{align}
    &\textcircled{1}\leqslant \| R_i^{*}\Pi_{\sod}(\Bar{Q}\Phi_1^\top)-R_i^{*}\Bar{Q}\Phi_1^\top\fnorm+\| R_i^{*}\Bar{Q}\Phi_1^\top-\Phi_i \Phi_1^\top\fnorm \notag\\
    & \leqslant \| \Pi_{\sod}(\Bar{Q}\Phi_1^\top)-\Bar{Q}\Phi_1^\top\fnorm+\| R_i^{*}\Bar{Q}-\Phi_i\fnorm \| \Phi_1\| \notag \\
    & \leqslant 2\| \Phi_1- R_1^{*} \Bar{Q}\fnorm +\| \Phi_i- R_i^{*}\Bar{Q} \fnorm \| \Phi_1\| \notag \\
    &\leqslant c_4(\Mt^2+1)^2(c_1\sigma_1+c_2\sigma_2 \Mt+c_3\sigma_2^2)\frac{(d\sqrt{d}+d\sqrt{\log n})}{\sqrt{n}}, \label{eqt1}
\end{align}
with probability at least $1-O(n^{-2})$. The third inequality in \eqref{eqt1} follows from \cite[Lemma~2]{liu2023unified}, as in the proof of Lemma~\ref{lem:step 1}. The last inequality in \eqref{eqt1} uses \eqref{eq8} from the proof of Lemma~\ref{le: rotation error bound}, as we have the same conditions of Lemma~\ref{le: rotation error bound}, and $\max_i\| \Phi_i \| \leqslant \max_i\| \Phi_i \fnorm = O((\Mt^2+1)\sqrt{d})$ in Lemma~\ref{le: leave-one-out}.

\begin{align}
    &\textcircled{2}=\frac{\sqrt{d}}{2n}\| \sum_{j\neq i}^n w_{ij}^\top\|_2 \leqslant c_5\sigma_2\frac{(d+\sqrt{d\log n})}{\sqrt{n}}, \label{eqt2}
\end{align}
with probability at least $1-O(n^{-2})$ by simply using Lemma \ref{le:estimation of w^ti and w^tiM}. And we also have
 \begin{align}
    &\textcircled{3}=\frac{1}{n}\| (s^{*\top})_i \|_2 = \frac{1}{n} \sqrt{ \sum_js^{*\top}_{ij}s^{*}_{ij}}\leqslant \frac{c_6\Mt}{\sqrt{n}} . \label{eqt3}
 \end{align}
The quantity~\textcircled{4} can be bound similarly to the term~\textcircled{1}:
\begin{align}
      &\textcircled{4}\leqslant \| R^{*}\Pi_{\sod}(\Bar{Q}\Phi_1^\top)-R^{*}\Bar{Q}\Phi_1^\top\fnorm+\| R^{*}\Bar{Q}\Phi_1^\top-\Phi \Phi_1^\top\fnorm \notag \\
    & \leqslant \sqrt{n}\| \Pi_{\sod}(\Bar{Q}\Phi_1^\top)-\Bar{Q}\Phi_1^\top\fnorm+\| R^{*}\Bar{Q}-\Phi \fnorm \| \Phi_1\| \notag \\
    & \leqslant 2\sqrt{n}\| \Phi_1- R_1^{*}\Bar{Q} \fnorm + \| \Phi- R^{*} \Bar{Q}\fnorm \| \Phi_1\|  \notag \\
    &\leqslant (\Mt^2+1)^2(c_1\sigma_1+c_2\sigma_2 \Mt+c_3\sigma_2^2)(d\sqrt{d}+d\sqrt{\log n}), \label{eqt4}
\end{align}
with probability at least $1-O(n^{-2})$. The last inequality in \eqref{eqt4} follows from Lemma~\ref{le:phi-RQ} and inequality \eqref{eq8} in Lemma~\ref{le: rotation error bound}, and $\max_i\| \Phi_i \| \leqslant \max_i\| \Phi_i \fnorm = O((\Mt^2+1)\sqrt{d})$ in Lemma~\ref{le: leave-one-out}.

For term \textcircled{5}, note that $((w^t)^\top)_i$ appears in $\Delta_i$. 
Similarly to bounding the rotation error, we apply a leave-one-out argument and borrow definitions $\Phi^{(i)}$ and $S^{(i)}$ in Lemma~\ref{le: leave-one-out}.
First, we rewrite \textcircled{5} as follows:
\begin{align*}
     \textcircled{5}= \frac{1}{2n}\| ((w^t)^\top)_i[\Pi_{\sod}^n(\Phi \Phi_1^\top )&-\Phi\Phi_1^\top+\Phi\Phi_1^\top
     \\&-\Phi^{(i)}S^{(i)}\Phi_1^\top+\Phi^{(i)}S^{(i)}\Phi_1^\top] \|_2.
\end{align*}
By the triangle inequality, we obtain
\begin{align}
     &\textcircled{5}\leqslant \frac{1}{2n}\| ((w^t)^\top)_i \|_2 \| \Pi_{\sod}^n(\Phi \Phi_1^\top )-\Phi\Phi_1^\top\| \notag
     \\&+\frac{1}{2n}\| ((w^t)^\top)_i \|_2\|\Phi\Phi_1^\top-\Phi^{(i)}S^{(i)}\Phi_1^\top\fnorm \notag
     \\
     &+\frac{1}{2n}\|((w^t)^\top)_i \Phi^{(i)}S^{(i)}\Phi_1^\top \|_2 .\label{eqt5}
\end{align}
By Lemma~\ref{le: leave-one-out}, we get $\max_i\| \Phi_i \| = O((\Mt^2+1)\sqrt{d})$, which together with \eqref{eqt5}, yields
     \begin{align}
    &\textcircled{5}\leqslant \frac{1}{2n}\| ((w^t)^\top)_i \|_2 \| \Pi_{\sod}^n(\Phi \Phi_1^\top )-\Phi\Phi_1^\top\| \notag\\
    &+c_8\frac{(\Mt^2+1)\sqrt{d}}{n}\| ((w^t)^\top)_i \|_2 \| \Phi-\Phi^{(i)}S^{(i)} \fnorm \notag\\
    &+c_8\frac{(\Mt^2+1)\sqrt{d}}{n}\| ((w^t)^\top)_i \Phi^{(i)} \|_2 . \label{eqt6}
\end{align}
Then, the first term in inequality \eqref{eqt6} can be bounded as follows:
\begin{align}
\frac{1}{2n}\| ((w^t)^\top)_i \|_2 & \| \Pi_{\sod}^n(\Phi \Phi_1^\top )-\Phi\Phi_1^\top\| \notag \\
&\leqslant c_9\frac{\sqrt{d}}{\sqrt{n}} \| \Pi_{\sod}^n(\Phi \Phi_1^\top )-\Phi\Phi_1^\top\| \notag \\
&\leqslant  c_9\frac{\sqrt{d}}{\sqrt{n}}  \|R^*\Pi_{\sod}( \Bar{Q}\Phi_1^\top )-\Phi\Phi_1^\top\|, \label{eqtt2}
\end{align}
where we use $\| ((w^t)^\top)_i \|_2\leqslant c\sigma_2\sqrt{nd}$ (due to Lemma~\ref{le:estimation of w^ti and w^tiM}). The last term in~\eqref{eqtt2} can be bounded as follows:
\begin{align}
&\frac{\sqrt{d}}{\sqrt{n}}  \|R^*\Pi_{\sod}( \Bar{Q}\Phi_1^\top )-\Phi\Phi_1^\top\|\notag\\
&\leqslant \frac{\sqrt{d}}{\sqrt{n}}\| R^{*}\Pi_{\sod}(\Bar{Q}\Phi_1^\top)-R^{*}\Bar{Q}\Phi_1^\top\|+\frac{\sqrt{d}}{\sqrt{n}}\| R^{*}\Bar{Q}\Phi_1^\top-\Phi \Phi_1^\top\| \notag \\
& \leqslant \sqrt{d}\| \Pi_{\sod}(\Bar{Q}\Phi_1^\top)-\Bar{Q}\Phi_1^\top\|+\frac{\sqrt{d}}{\sqrt{n}}\| R^{*}\Bar{Q}-\Phi \| \| \Phi_1\| \notag \\
& \leqslant \sqrt{d}\| \Phi_1- R_1^{*}\Bar{Q} \| +\frac{\sqrt{d}}{\sqrt{n}} \| \Phi- R^{*} \Bar{Q}\| \| \Phi_1\|  \notag \\
& \leqslant  (\Mt^2+1)(c_1\sigma_1+c_2\sigma_2 \Mt+c_3\sigma_2^2)\frac{(d\sqrt{d}+d\sqrt{\log n}) }{\sqrt{n}}, \label{eqt7}
\end{align}
where the last inequality in \eqref{eqt7} follows from Lemma~\ref{le:phi-RQ2},  inequality \eqref{eq8}, and the fact $\max_i\| \Phi_i \| = O((\Mt^2+1)\sqrt{d})$.

Meanwhile, using \eqref{ineq:phi-phi_i}, Lemma~\ref{le:estimation of w^ti and w^tiM}, Lemma~\ref{le: leave-one-out}, and the assumption $(c_1\sigma_1+c_2\sigma_2 \Mt+c_3\sigma_2^2) \leqslant \frac{c_0\sqrt{n}}{\sqrt{d}+\sqrt{\log n}}$, we have $\| \Phi -\Phi^{(i)}S^{(i)}\fnorm \leqslant (\Mt^2+1)(c_1\sigma_1+c_2\sigma_2 \Mt+c_3\sigma_2^2)\frac{(d+\sqrt{d\log n})}{\sqrt{n}}$ with probability at least $1-O(n^{-2})$. Substituting this into \eqref{eqt6} yields
\begin{align}
    &\frac{(\Mt^2+1)\sqrt{d}}{n}\| ((w^t)^\top)_i \|_2 \| \Phi-\Phi^{(i)}S^{(i)} \fnorm  \notag \\
    &\leqslant c_{10}(\Mt^2+1)^2(c_1\sigma_1+c_2\sigma_2 \Mt+c_3\sigma_2^2)\frac{(d\sqrt{d}+d\sqrt{\log n})}{\sqrt{n}}, \label{eqt8}
\end{align}
where we used the fact that $\| ((w^t)^\top)_i \|_2\leqslant c\sigma_2\sqrt{nd} \leqslant \frac{n}{4}$ (due to Lemma~\ref{le:estimation of w^ti and w^tiM} and the assumption that $\sigma_2 \leqslant c'\frac{\sqrt{n}}{\sqrt{d}}$ for some constant $c$). 

Recall the definition of $\Phi^{(i)}$ in Lemma~\ref{le: leave-one-out}. It is independent of of the $i$-th block row and $i$-th column of $E:=\BlkDiag(\sum_{j=1}^n w^t_{1j},\allowbreak \dots,\sum_{j=1}^n w^t_{nj})-w^t$. Therefore, $((w^t)^\top)_i$ in the third term in \eqref{eqt6} is independent of $\Phi^{(i)}$. Furthermore, following the same argument for bounding the third term in~\eqref{eqt6}, we obtain
\begin{align}
     \frac{(\Mt^2+1)\sqrt{d}}{2n}&\| ((w^t)^\top)_i\Phi^{(i)} \|_2 \notag \\
     &\leqslant c_{11}\sigma_2 (\Mt^2+1)^2\frac{(d\sqrt{d}+d\sqrt{\log n})}{\sqrt{n}}, \label{eqt9}
\end{align}
with probability at least $1-O(n^{-2})$.
Finally, substituting~\eqref{eqt7}, \eqref{eqt8} and \eqref{eqt9} into \eqref{eqt6}, we obtain
\begin{align}
     &\textcircled{5}\leqslant  c_{12}(\Mt^2+1)^2(c_1\sigma_1+c_2\sigma_2 \Mt+c_3\sigma_2^2)\frac{(d\sqrt{d}+d\sqrt{\log n}) }{\sqrt{n}}.  \label{eqt10} 
\end{align}
Under the same assumption on the noise magnitudes and using the bounds for \textcircled{1}-\textcircled{5}, i.e., \eqref{eqt1}, \eqref{eqt2}, \eqref{eqt3}, \eqref{eqt4} and \eqref{eqt10} into \eqref{eqt0}, for a fixed $i$, we have
\begin{align*}
   &\| \hat{t}_i - (\Pi_{\sod}(\Bar{Q}\Phi_1^\top))^\top t_i^* \|_2 \\
   &\leqslant c_{13}\Mt(\Mt^2+1)^2(c_1\sigma_1+c_2\sigma_2 \Mt+c_3\sigma_2^2)\frac{(d\sqrt{d}+d\sqrt{\log n})}{\sqrt{n}},
\end{align*}
with probability at least $1 - O(n^{-2})$. An application of the union bound over $i = 1,\dots,n$ completes the proof. 
\end{proof}



\end{document}